%2multibyte Version: 5.50.0.2953 CodePage: 65001
\documentclass{amsart}%
\usepackage{xcolor}
\usepackage{amssymb}
\usepackage{amsfonts}
\usepackage{geometry}
\usepackage{amsmath}
\usepackage{graphicx}%
\setcounter{MaxMatrixCols}{30}
%TCIDATA{OutputFilter=latex2.dll}
%TCIDATA{Version=5.50.0.2953}
%TCIDATA{Codepage=65001}
%TCIDATA{CSTFile=amsartci.cst}
%TCIDATA{Created=Wednesday, December 26, 2018 12:33:13}
%TCIDATA{LastRevised=Thursday, February 08, 2024 09:07:28}
%TCIDATA{<META NAME="GraphicsSave" CONTENT="32">}
%TCIDATA{<META NAME="SaveForMode" CONTENT="1">}
%TCIDATA{BibliographyScheme=Manual}
%TCIDATA{<META NAME="DocumentShell" CONTENT="Articles\SW\AMS Journal Article">}
%BeginMSIPreambleData
\providecommand{\U}[1]{\protect\rule{.1in}{.1in}}
%EndMSIPreambleData
\newtheorem{theorem}{Theorem}
\theoremstyle{plain}

\newtheorem{corollary}[theorem]{Corollary}

\newtheorem{definition}[theorem]{Definition}

\newtheorem{lemma}[theorem]{Lemma}
\newtheorem{notation}[theorem]{Notation}
\newtheorem{problem}[theorem]{Problem}
\newtheorem{proposition}[theorem]{Proposition}
\newtheorem{remark}[theorem]{Remark}

\numberwithin{equation}{section}
\geometry{left=1in,right=1in,top=1in,bottom=1in}

\begin{document}
\title[A weak to strong type $T1$ theorem]{A weak to strong type $T1$ theorem for general smooth Calder\'{o}n-Zygmund
operators with doubling weights, II}
\author[M. Alexis]{Michel Alexis}
\address{Department of Mathematics \& Statistics, McMaster University, 1280 Main Street
West, Hamilton, Ontario, Canada L8S 4K1}
\email{alexism@mcmaster.ca}
\author[E.T. Sawyer]{Eric T. Sawyer}
\address{Department of Mathematics \& Statistics, McMaster University, 1280 Main Street
West, Hamilton, Ontario, Canada L8S 4K1 }
\email{Sawyer@mcmaster.ca}
\author[I. Uriarte-Tuero]{Ignacio Uriarte-Tuero}
\address{Department of Mathematics, University of Toronto\\
Room 6290, 40 St. George Street, Toronto, Ontario, Canada M5S 2E4\\
(Adjunct appointment)\\
Department of Mathematics\\
619 Red Cedar Rd., room C212\\
Michigan State University\\
East Lansing, MI 48824 USA}
\email{ignacio.uriartetuero@utoronto.ca}
\thanks{E. Sawyer is partially supported by a grant from the National Research Council
of Canada}
\thanks{I. Uriarte-Tuero has been partially supported by grant MTM2015-65792-P
(MINECO, Spain), and is partially supported by a grant from the National
Research Council of Canada}

\begin{abstract}
We consider the weak to strong type problem for two weight norm inequalities
for Calder\'{o}n-Zygmund operators with \emph{doubling} weights. We show that
if a Calder\'{o}n-Zygmund operator $T$ is weak type $\left(  2,2\right)  $
with doubling weights, then it is strong type $\left(  2,2\right)  $ if and
only if the dual cube testing condition for $T^{\ast}$ holds, alternatively if
and only if the dual cancellation condition of Stein holds. This continues the
weighted theory begun in \cite{Saw6}.

The testing condition can be taken with respect to either cubes or balls, and
more generally, this is extended to a weak form of $Tb$ theorem.

Finally, we show that for all pairs of locally finite positive Borel measures,
and all Stein elliptic Calder\'{o}n-Zygmund operators $T$, the weak type
$\left(  2,2\right)  $ inequalities for $T$ and and its associated maximal
truncations operator $T_{\flat}$ are equivalent. Thus the characterization of
weak type for $T_{\flat}$ in \cite{LaSaUr1} applies to $T$ as well.

\end{abstract}
\maketitle
\tableofcontents

\section{Introduction}

\textit{This paper is a sequel to the second author's paper \cite{Saw6}, and
continues to be dedicated to the memory of Professor Elias M. Stein.}

In this paper we attack the problem of characterizing the two weight norm
inequality for Calder\'{o}n-Zygmund operators in two separate steps. First, we
characterize the weak type problem for general weights in terms of a testing
type condition. Second, we solve the \emph{weak} to \emph{strong} type problem
for doubling weights in terms of the standard cube testing condition.

\begin{problem}
Given a pair of locally finite positive Borel measures on $\mathbb{R}^{n}$,
and an $\alpha$-fractional Calder\'{o}n-Zygmund operator $T^{\alpha}$ that is
weak type $\left(  2,2\right)  $, i.e.%
\begin{equation}
\left\Vert T_{\sigma}^{\alpha}f\right\Vert _{L^{2,\infty}\left(
\omega\right)  }\leq\operatorname*{weak}\mathfrak{N}_{T^{\alpha}}\left(
\sigma,\omega\right)  \left\Vert f\right\Vert _{L^{2}\left(  \sigma\right)  },
\label{weak}%
\end{equation}
characterize when $T_{\sigma}^{\alpha}$ is strong type $\left(  2,2\right)  $,
i.e.
\begin{equation}
\left\Vert T_{\sigma}^{\alpha}f\right\Vert _{L^{2}\left(  \omega\right)  }%
\leq\mathfrak{N}_{T^{\alpha}}\left(  \sigma,\omega\right)  \left\Vert
f\right\Vert _{L^{2}\left(  \sigma\right)  }. \label{strong}%
\end{equation}

\end{problem}

One rationale for considering the two step approach used here is the notorious
difficulty of the general $T1$ conjecture\footnote{see e.g. \cite{Hyt2},
\cite{Lac}, \cite{LaSaShUr3}, \cite{LaWi}, \cite{NTV4}, \cite{SaShUr7},
\cite{SaShUr12}, \cite{Vol} and many more of the references.}, namely that an
$\alpha$-fractional Calder\'{o}n-Zygmund operator $T^{\alpha}$ is strong type
if and only if both the testing condition and its dual formulation hold, as
well as an extension of the classical $A_{2}$ condition of Muckenhoupt. In the
two step approach used here, we are concerned with two logically easier
problems, that of characterizing weak type boundedness, followed by
characterizing the passage from weak type to strong type. It is notable that
there is no known counterexample to the general $T1$ conjecture, yet the
conjecture has only been shown to hold for the Hilbert transform (see the two
part paper \cite{LaSaShUr3}, \cite{Lac}) and certain perturbations (see
\cite{SaShUr10}).

A corollary of our main theorem below is that when $T^{\alpha}$ is a Stein
elliptic Calder\'{o}n-Zygmund operator on $\mathbb{R}^{n}$, and when $\sigma$
and $\omega$ are both doubling measures, then given that $T^{\alpha}$ is weak
type $\left(  2,2\right)  $, it is strong type $\left(  2,2\right)  $ if and
only if the cube testing constant $\mathfrak{T}_{T^{\alpha}}\left(
\sigma,\omega\right)  $ for $T^{\alpha}$ is finite, where%
\begin{equation}
\mathfrak{T}_{T^{\alpha}}\left(  \sigma,\omega\right)  \equiv\sup
_{Q\in\mathcal{P}^{n}}\sqrt{\frac{1}{\left\vert Q\right\vert _{\sigma}}%
\int_{Q}\left\vert T_{\sigma}^{\alpha}\mathbf{1}_{Q}\right\vert ^{2}\omega}.
\label{Muck and test}%
\end{equation}
Under the above hypotheses of doubling measures, we can also replace the cube
testing constant $\mathfrak{T}_{T^{\alpha}}\left(  \sigma,\omega\right)  $
with the \emph{cancellation} constant of Stein $\mathfrak{A}_{K^{\alpha}%
}\left(  \sigma,\omega\right)  $, which is defined as the best constant
$C\left(  \sigma,\omega\right)  $ in the inequality,%
\begin{align}
&  \int_{\left\vert x-x_{0}\right\vert <N}\left\vert \int_{\varepsilon
<\left\vert x-y\right\vert <N}K^{\alpha}\left(  x,y\right)  d\sigma\left(
y\right)  \right\vert ^{2}d\omega\left(  x\right)  \leq C\left(  \sigma
,\omega\right)  \ \int_{\left\vert x_{0}-y\right\vert <N}d\sigma\left(
y\right)  ,\label{can cond}\\
&  \ \ \ \ \ \ \ \ \ \ \ \ \ \ \ \ \ \ \ \ \ \ \ \ \ \text{for all
}0<\varepsilon<N\text{ and }x_{0}\in\mathbb{R}^{n}.\nonumber
\end{align}
The cancellation constant $\mathfrak{A}_{K^{\alpha}}\left(  \sigma
,\omega\right)  $ has the historical form of bounding, in an average sense,
integrals of the kernel over annuli, and arose originally in the setting of
Lebesgue measure, in the form of the $T1$ theorem presented by E. Stein in
\cite[Theorem 4 page 306]{Ste2}). Similar suprema and inequalities define the
dual constants $\mathfrak{T}_{T^{\alpha,\ast}}$ and $\mathfrak{A}%
_{K^{\alpha,\ast}}$, in which the measures $\sigma$ and $\omega$ are
interchanged and $K^{\alpha}\left(  x,y\right)  $ is replaced by
$K^{\alpha,\ast}\left(  x,y\right)  =K^{\alpha}\left(  y,x\right)  $. It
should also be noted that (\ref{can cond}) is \emph{not} simply the testing
condition for a truncation of $T$ over a ball.

Define the classical Muckenhoupt constant $A_{2}^{\alpha}\left(  \sigma
,\omega\right)  $ by%
\begin{equation}
A_{2}^{\alpha}\left(  \sigma,\omega\right)  \equiv\sup_{Q\in\mathcal{P}^{n}%
}\frac{\left\vert Q\right\vert _{\sigma}}{\left\vert Q\right\vert
^{1-\frac{\alpha}{n}}}\frac{\left\vert Q\right\vert _{\omega}}{\left\vert
Q\right\vert ^{1-\frac{\alpha}{n}}}, \label{A2}%
\end{equation}
and the indicator / cube testing constant $\mathfrak{T}_{T^{\alpha}%
}^{\operatorname*{ind}}\left(  \sigma,\omega\right)  $ by%
\begin{equation}
\mathfrak{T}_{T^{\alpha}}^{\operatorname*{ind}}\left(  \sigma,\omega\right)
\equiv\sup_{Q\in\mathcal{P}^{n}}\frac{1}{\sqrt{\left\vert Q\right\vert
_{\sigma}}}\sup_{E\subset Q}\left\vert \int_{Q}\left(  T_{\sigma}^{\alpha
}\mathbf{1}_{E}\right)  ^{2}\omega\right\vert . \label{ind}%
\end{equation}

Our main theorem is this, which improves upon the corresponding theorem in
\cite{Saw6} by eliminating one of the indicator / cube testing consitions, as
well as a comparability hypothesis on the pair of measures $\left(
\sigma,\omega\right)  $. As in \cite{Saw6}, one can also replace the testing
constants\ with the cancellation constants of Stein.

\begin{theorem}
\label{main}Suppose $\sigma$ and $\omega$ are doubling measures on
$\mathbb{R}^{n}$, and that $T^{\alpha}$ is a smooth Stein elliptic
Calder\'{o}n-Zygmund operator on $\mathbb{R}^{n}$. Then (\ref{strong}) holds
if and only if the testing constants in (\ref{Muck and test}) and (\ref{ind})
are finite, and furthermore, we can replace (\ref{Muck and test}) with
(\ref{can cond}) here. Moreover, we have the equivalences,%
\begin{align}
\mathfrak{N}_{T^{\alpha}}\left(  \sigma,\omega\right)   &  \approx
\mathfrak{T}_{T^{\alpha}}\left(  \sigma,\omega\right)  +\mathfrak{T}%
_{T^{\alpha,\ast}}^{\operatorname*{ind}}\left(  \omega,\sigma\right)
\label{in line}\\
&  \approx\mathfrak{A}_{K^{\alpha}}\left(  \sigma,\omega\right)
+\mathfrak{T}_{T^{\alpha,\ast}}^{\operatorname*{ind}}\left(  \omega
,\sigma\right) \nonumber\\
&  \approx\mathfrak{T}_{T^{\alpha}}\left(  \sigma,\omega\right)
+\mathfrak{N}_{T^{\alpha}}^{\operatorname*{weak}}\left(  \sigma,\omega\right)
\nonumber\\
&  \approx\mathfrak{A}_{K^{\alpha}}\left(  \sigma,\omega\right)
+\mathfrak{N}_{T^{\alpha}}^{\operatorname*{weak}}\left(  \sigma,\omega\right)
,\nonumber
\end{align}
and the corresponding equivalences with $T^{\alpha}$ and $T^{\alpha,\ast}$ and
their constants interchanged. Here $\mathfrak{N}_{T^{\alpha}}%
^{\operatorname*{weak}}\left(  \sigma,\omega\right)  $ denotes the weak type
norm of $T^{\alpha}$.
\end{theorem}

\begin{remark}
It is easy to see that the second line in (\ref{in line}) follows from the
first line in (\ref{in line}) using
\[
\mathfrak{N}_{T^{\alpha}}\left(  \sigma,\omega\right)  \geq
\operatorname*{weak}\mathfrak{N}_{T^{\alpha}}\left(  \sigma,\omega\right)
\geq\mathfrak{T}_{T^{\alpha,\ast}}^{\operatorname*{ind}}\left(  \omega
,\sigma\right)  .
\]
To see this last display, recall that the dual space of the Banach space
$L^{2,1}\left(  \mu\right)  $ is $L^{2,\infty}\left(  \mu\right)  $ for any
$\sigma$-finite nonatomic measure, see e.g. \cite[Theorem 1.4.17 (v) page
52]{Gra}. Thus%
\begin{align*}
&  \mathfrak{N}_{T^{\alpha,\ast}}^{\operatorname*{rest}}\left(  \omega
,\sigma\right)  \equiv\sup_{\left\Vert g\right\Vert _{L^{2,1}\left(
\omega\right)  }\leq1}\left\Vert T_{\omega}^{\alpha,\ast}g\right\Vert
_{L^{2}\left(  \sigma\right)  }=\sup_{\left\Vert g\right\Vert _{L^{2,1}\left(
\omega\right)  }\leq1}\sup_{\left\Vert f\right\Vert _{L^{2}\left(
\sigma\right)  }\leq1}\left\vert \int\left(  T_{\omega}^{\alpha,\ast}g\right)
fd\sigma\right\vert \\
&  =\sup_{\left\Vert f\right\Vert _{L^{2}\left(  \sigma\right)  }\leq1}%
\sup_{\left\Vert g\right\Vert _{L^{2,1}\left(  \omega\right)  }\leq
1}\left\vert \int g\left(  T_{\sigma}^{\alpha}f\right)  d\sigma\right\vert
=\sup_{\left\Vert f\right\Vert _{L^{2}\left(  \sigma\right)  }\leq1}\left\Vert
T_{\sigma}^{\alpha}f\right\Vert _{L^{2,1}\left(  \omega\right)  ^{\ast}}%
=\sup_{\left\Vert f\right\Vert _{L^{2}\left(  \sigma\right)  }\leq1}\left\Vert
T_{\sigma}^{\alpha}f\right\Vert _{L^{2,\infty}\left(  \omega\right)
}=\operatorname*{weak}\mathfrak{N}_{T^{\alpha}}\left(  \sigma,\omega\right)  ,
\end{align*}
\ and then by \cite[Theorem 3.13 page 195]{StWe} we have
\begin{align*}
\mathfrak{N}_{T^{\alpha}}\left(  \sigma,\omega\right)   &  \geq
\operatorname*{weak}\mathfrak{N}_{T^{\alpha}}\left(  \sigma,\omega\right)
=\mathfrak{N}_{T^{\alpha,\ast}}^{\operatorname*{rest}}\left(  \omega
,\sigma\right)  \approx\sup_{E}\sqrt{\frac{\int\left\vert T_{\omega}%
^{\alpha,\ast}\mathbf{1}_{E}\right\vert ^{2}d\sigma}{\left\vert E\right\vert
_{\omega}}}\\
&  \geq\sup_{Q\in\mathcal{D}}\sup_{E\subset Q}\sqrt{\frac{\int_{Q}\left\vert
T_{\omega}^{\alpha,\ast}\mathbf{1}_{E}\right\vert ^{2}d\sigma}{\left\vert
Q\right\vert _{\omega}}}=\mathfrak{T}_{T^{\alpha,\ast}}^{\operatorname*{ind}%
}\left(  \omega,\sigma\right)  .
\end{align*}

\end{remark}

We thus have the following solution to a case of the weak to strong type problem.

\begin{corollary}
Suppose $\sigma$ and $\omega$ are doubling measures, and $T^{\alpha}$ is a
Stein elliptic Calder\'{o}n-Zygmund operator on $\mathbb{R}^{n}$ of weak type
$\left(  2,2\right)  $ with respect to $\left(  \sigma,\omega\right)  $. Then%
\begin{align*}
&  T^{\alpha}\text{ is strong type }\left(  2,2\right) \\
&  \Longleftrightarrow\mathfrak{T}_{T^{\alpha}}\left(  \sigma,\omega\right)
\text{ is finite}\\
&  \Longleftrightarrow\mathfrak{A}_{K^{\alpha}}\left(  \sigma,\omega\right)
\text{ is finite},
\end{align*}
and moreover,%
\[
\mathfrak{N}_{T^{\alpha}}\left(  \sigma,\omega\right)  \approx\mathfrak{N}%
_{T^{\alpha}}^{\operatorname*{weak}}\left(  \sigma,\omega\right)
+\mathfrak{T}_{T^{\alpha}}\left(  \sigma,\omega\right)  \approx\mathfrak{N}%
_{T^{\alpha}}^{\operatorname*{weak}}\left(  \sigma,\omega\right)
+\mathfrak{A}_{K^{\alpha}}\left(  \sigma,\omega\right)  .
\]

\end{corollary}

\subsection{Weak type inequalities}

The above corollary begs the question of characterizing the weak type norm
$\mathfrak{N}_{T^{\alpha}}^{\operatorname*{weak}}\left(  \sigma,\omega\right)
$ in terms of a testing condition, which is the first step in our two step
program. In order to shed light on this question, we recall the maximal
truncation operator $T_{\flat}^{\alpha}$ associated to $T^{\alpha}$ defined by
$T_{\flat,\sigma}f\left(  x\right)  \equiv\sup_{\varepsilon>0}\left\vert
T_{\varepsilon,\sigma}f\left(  x\right)  \right\vert $. In the final section
of the paper we show that for a Stein elliptic operator $T$, the weak type
inequalities for $T$ and $T_{\flat}$ are equivalent.

\begin{theorem}
\label{weak type}Suppose $T^{\alpha}$ is an $\alpha$-fractional
Calder\'{o}n-Zygmund operator in $\mathbb{R}^{n}$ with kernel satisfying just
(\ref{basic}) below, and $\sigma$ and $\omega$ are locally finite positive
Borel measures in $\mathbb{R}^{n}$. Then we have%
\begin{equation}
\mathfrak{N}_{T^{\alpha}}^{\operatorname*{weak}}\left(  \sigma,\omega\right)
\lesssim\mathfrak{N}_{T_{\flat}^{\alpha}}^{\operatorname*{weak}}\left(
\sigma,\omega\right)  \lesssim\mathfrak{N}_{T^{\alpha}}^{\operatorname*{weak}%
}\left(  \sigma,\omega\right)  +\sqrt{A_{2}^{\alpha}\left(  \sigma
,\omega\right)  },\label{weak flat}%
\end{equation}
where $\mathfrak{N}_{T^{\alpha}}^{\operatorname*{weak}}\left(  \sigma
,\omega\right)  $ and $\mathfrak{N}_{T_{\flat}^{\alpha}}^{\operatorname*{weak}%
}\left(  \sigma,\omega\right)  $ are the weak type $\left(  2,2\right)  $
norms of $T^{\alpha}$ and $T_{\flat}^{\alpha}$ respectively. If in addition
$T^{\alpha}$ is Stein ellliptic, then the Muckenhoupt constant $\sqrt
{A_{2}^{\alpha}\left(  \sigma,\omega\right)  }$ can be dropped from the right
hand side of (\ref{weak flat}).
\end{theorem}

Then from the characterization of weak type for maximal truncations
in\ \cite[Theorem 1.8 (1)]{LaSaUr1}, we obtain the corollary that%
\begin{equation}
\mathfrak{N}_{T^{\alpha}}^{\operatorname*{weak}}\left(  \sigma,\omega\right)
\approx\mathfrak{N}_{T_{\flat}^{\alpha}}^{\operatorname*{weak}}\left(
\sigma,\omega\right)  \approx\mathfrak{T}_{T_{\flat}^{\alpha}}%
^{\operatorname*{flat}}\left(  \sigma,\omega\right)  , \label{weak type char}%
\end{equation}
where the \emph{flat testing} constant $\mathfrak{T}_{T_{\flat}^{\alpha}%
}^{\operatorname*{flat}}\left(  \sigma,\omega\right)  $ (introduced in
\cite{LaSaUr1}) is the best constant in the inequality,%
\begin{equation}
\left\vert \int_{Q}T_{\flat,\sigma}^{\alpha}\left(  \mathbf{1}_{Q}f\right)
\left(  x\right)  d\omega\left(  x\right)  \right\vert \leq\mathfrak{T}%
_{T_{\flat}^{\alpha}}^{\operatorname*{flat}}\left(  \sigma,\omega\right)
\left\Vert f\right\Vert _{L^{2}\left(  \sigma\right)  }\left\Vert
\mathbf{1}_{Q}\right\Vert _{L^{2}\left(  \omega\right)  }\ . \label{two test}%
\end{equation}
Note that if $T_{\flat}^{\alpha}$ were \emph{linear}, then the inequality in
(\ref{two test}) would be equivalent to the dual cube testing condition,%
\[
\int_{Q}T_{\flat,\omega}^{\alpha,\ast}\left(  \mathbf{1}_{Q}\right)  \left(
x\right)  ^{2}d\sigma\left(  x\right)  \lesssim\left\vert Q\right\vert
_{\omega}\ .
\]
In general, we can only apply duality to linearizations $L$ of $T_{\flat
}^{\alpha}$ (see \cite{LaSaUr1} for definitions), and (\ref{two test}) is then
equivalent to%
\[
\int_{Q}L_{\omega}\left(  \mathbf{1}_{Q}\right)  \left(  x\right)  ^{2}%
d\sigma\left(  x\right)  \leq C\mathfrak{T}_{T_{\flat}^{\alpha}}%
^{\operatorname*{flat}}\left(  \sigma,\omega\right)  \left\vert Q\right\vert
_{\omega}\ ,
\]
taken uniformly over all linearizations $L$ of $T_{\flat}^{\alpha}$.

Note also that in the weak type characterization (\ref{weak type char}), the
conditions required of the kernel of $T$ are very weak, namely just
(\ref{basic}) below, which consists of the size condition together with a Dini
smoothness condition in the first variable of the kernel. In conclusion we
obtain the following theorem.

\begin{theorem}
\label{flat theorem}Suppose $\sigma$ and $\omega$ are doubling measures on
$\mathbb{R}^{n}$, and that $T^{\alpha}$ is a smooth Stein elliptic
Calder\'{o}n-Zygmund operator on $\mathbb{R}^{n}$. Then,%
\[
\mathfrak{N}_{T^{\alpha}}\left(  \sigma,\omega\right)  \approx\mathfrak{T}%
_{T^{\alpha}}\left(  \sigma,\omega\right)  +\mathfrak{T}_{T_{\flat}%
}^{\operatorname*{flat}}\left(  \sigma,\omega\right)  .
\]

\end{theorem}

\begin{remark}
We showed above that%
\[
\mathfrak{T}_{T^{\ast}}^{\operatorname*{ind}}\left(  \omega,\sigma\right)
\lesssim\mathfrak{N}_{T}^{\operatorname*{weak}}\left(  \sigma,\omega\right)
,
\]
and so we have
\begin{equation}
\mathfrak{T}_{T^{\ast}}^{\operatorname*{ind}}\left(  \omega,\sigma\right)
\lesssim\mathfrak{T}_{T_{\flat}}^{\operatorname*{flat}}\left(  \sigma
,\omega\right)  , \label{ind flat}%
\end{equation}
Thus Theorem \ref{flat theorem} follows from (\ref{ind flat}) and Theorem
\ref{main}. However, we do not know how to obtain (\ref{ind flat}) without
going through the weak type norm $\mathfrak{N}_{T}^{\operatorname*{weak}%
}\left(  \sigma,\omega\right)  $ and using Theorem \ref{weak type} and
\cite[Theorem 1.8 (1)]{LaSaUr1}.
\end{remark}

\begin{remark}
\label{main'}The arguments below will show that for doubling measures $\sigma$
and $\omega$, and any smooth fractional Calder\'{o}n-Zygmund operator
$T^{\alpha}$ on $\mathbb{R}^{n}$, not necessarily Stein elliptic, we have the
inequalities,%
\begin{align*}
\mathfrak{N}_{T^{\alpha}}\left(  \sigma,\omega\right)   &  \lesssim
\mathfrak{T}_{T^{\alpha}}\left(  \sigma,\omega\right)  +\mathfrak{T}%
_{T^{\alpha,\ast}}^{\operatorname*{ind}}\left(  \omega,\sigma\right)
+\sqrt{A_{2}^{\alpha}\left(  \sigma,\omega\right)  },\\
\mathfrak{N}_{T^{\alpha}}^{\operatorname*{weak}}\left(  \sigma,\omega\right)
&  \lesssim\mathfrak{T}_{T_{\flat}}^{\operatorname*{flat}}\left(
\sigma,\omega\right)  +\sqrt{A_{2}^{\alpha}\left(  \sigma,\omega\right)  },
\end{align*}
where (\ref{sizeandsmoothness'}) is assumed in the first line, and
(\ref{basic}) in the second line.
\end{remark}

\section{Preliminaries}

Denote by $\mathcal{P}^{n}$ the collection of cubes in $\mathbb{R}^{n}$ having
sides parallel to the coordinate axes; all cubes mentioned in this paper will
be elements of $\mathcal{P}^{n}$. A positive locally finite Borel measure
$\mu$ on $\mathbb{R}^{n}$ is said to satisfy the\emph{\ doubling condition} if
$\left\vert 2Q\right\vert _{\mu}\leq C_{\operatorname*{doub}}\left\vert
Q\right\vert _{\mu}$ for all cubes $Q\in\mathcal{P}^{n}$. It is well known
(see e.g. the introduction in \cite{SaUr}) that doubling implies reverse
doubling, which means that there exists a positive constant $\theta_{\mu
}^{\operatorname*{rev}}$, called a \emph{reverse doubling exponent}, such that%
\[
\sup_{Q\in\mathcal{P}^{n}}\frac{\left\vert sQ\right\vert _{\mu}}{\left\vert
Q\right\vert _{\mu}}\leq s^{\theta_{\mu}^{\operatorname*{rev}}}%
,\ \ \ \ \ \text{for all sufficiently small }s>0.
\]

\subsection{Standard fractional singular integrals and the norm inequality}

Let $0\leq\alpha<n$ and $\kappa\in\mathbb{N}$. We define a standard $\left(
\kappa+\delta\right)  $-smooth $\alpha$-fractional Calder\'{o}n-Zygmund kernel
$K^{\alpha}(x,y)$ to be a function $K^{\alpha}:\mathbb{R}^{n}\times
\mathbb{R}^{n}\rightarrow\mathbb{R}$ satisfying the following fractional size
and smoothness conditions: for $x\neq y$, and with $\nabla_{1}$ and
$\nabla_{2}$ denoting gradient in the first and second variables
respectively,
\begin{align}
&  \left\vert \nabla_{1}^{j}K^{\alpha}\left(  x,y\right)  \right\vert \leq
C_{CZ}\left\vert x-y\right\vert ^{\alpha-j-n},\ \ \ \ \ 0\leq j\leq
\kappa,\label{sizeandsmoothness'}\\
&  \left\vert \nabla_{1}^{\kappa}K^{\alpha}\left(  x,y\right)  -\nabla
_{1}^{\kappa}K^{\alpha}\left(  x^{\prime},y\right)  \right\vert \leq
C_{CZ}\left(  \frac{\left\vert x-x^{\prime}\right\vert }{\left\vert
x-y\right\vert }\right)  ^{\delta}\left\vert x-y\right\vert ^{\alpha-\kappa
-n},\ \ \ \ \ \frac{\left\vert x-x^{\prime}\right\vert }{\left\vert
x-y\right\vert }\leq\frac{1}{2},\nonumber
\end{align}
and where the same inequalities hold for the adjoint kernel $K^{\alpha,\ast
}\left(  x,y\right)  \equiv K^{\alpha}\left(  y,x\right)  $, in which $x$ and
$y$ are interchanged, and where $\nabla_{1}$ is replaced by $\nabla_{2}$.

\subsubsection{Ellipticity of kernels}

Following \cite[(39) on page 210]{Ste}, we say that an $\alpha$-fractional
Calder\'{o}n-Zygmund kernel $K^{\alpha}$ is \emph{elliptic in the sense of
Stein} if there is a unit vector $\mathbf{u}_{0}\in\mathbb{R}^{n}$ and a
constant $c>0$ such that%
\begin{equation}
\left\vert K^{\alpha}\left(  x,x+t\mathbf{u}_{0}\right)  \right\vert \geq
c\left\vert t\right\vert ^{\alpha-n},\ \ \ \ \ \text{for all }t\in\mathbb{R}.
\label{steinelliptic}%
\end{equation}

\begin{remark}
\label{real}The functions and kernels in the Calder\'{o}n-Zygmund operators
considered here, are assumed to be complex-valued. However, it should be noted
that for all of the sufficiency proofs in this paper, one may assume without
loss of generality that the functions and kernels are real-valued. It is only
in (\ref{steinelliptic}) that both real and imaginary parts might be needed.
\end{remark}

\subsubsection{Defining the norm inequality\label{Subsubsection norm}}

We follow the approach in \cite[see page 314]{SaShUr9}. So we suppose that
$K^{\alpha}$ is a standard $\left(  \kappa+\delta\right)  $-smooth $\alpha
$-fractional Calder\'{o}n-Zygmund kernel, and we introduce a family $\left\{
\eta_{\delta,R}^{\alpha}\right\}  _{0<\delta<R<\infty}$ of nonnegative
functions on $\left[  0,\infty\right)  $ so that the truncated kernels
$K_{\delta,R}^{\alpha}\left(  x,y\right)  =\eta_{\delta,R}^{\alpha}\left(
\left\vert x-y\right\vert \right)  K^{\alpha}\left(  x,y\right)  $ are bounded
with compact support for fixed $x$ or $y$, and uniformly satisfy
(\ref{sizeandsmoothness'}). Then the truncated operators
\[
T_{\sigma,\delta,R}^{\alpha}f\left(  x\right)  \equiv\int_{\mathbb{R}^{n}%
}K_{\delta,R}^{\alpha}\left(  x,y\right)  f\left(  y\right)  d\sigma\left(
y\right)  ,\ \ \ \ \ x\in\mathbb{R}^{n},
\]
are pointwise well-defined when $f$ is bounded with compact support, and we
will refer to the pair $\left(  K^{\alpha},\left\{  \eta_{\delta,R}^{\alpha
}\right\}  _{0<\delta<R<\infty}\right)  $ as an $\alpha$-fractional singular
integral operator, which we typically denote by $T^{\alpha}$, suppressing the
dependence on the truncations.

\begin{definition}
\label{def bounded}We say that an $\alpha$-fractional singular integral
operator $T^{\alpha}=\left(  K^{\alpha},\left\{  \eta_{\delta,R}^{\alpha
}\right\}  _{0<\delta<R<\infty}\right)  $ satisfies the norm inequality%
\begin{equation}
\left\Vert T_{\sigma}^{\alpha}f\right\Vert _{L^{2}\left(  \omega\right)  }%
\leq\mathfrak{N}_{T^{\alpha}}\left\Vert f\right\Vert _{L^{2}\left(
\sigma\right)  },\ \ \ \ \ f\in L^{2}\left(  \sigma\right)  .
\label{two weight'}%
\end{equation}
provided%
\[
\left\Vert T_{\sigma,\delta,R}^{\alpha}f\right\Vert _{L^{2}\left(
\omega\right)  }\leq\mathfrak{N}_{T^{\alpha}}\left(  \sigma,\omega\right)
\left\Vert f\right\Vert _{L^{2}\left(  \sigma\right)  },\ \ \ \ \ f\in
L^{2}\left(  \sigma\right)  ,0<\delta<R<\infty.
\]

\end{definition}

\begin{description}
\item[Independence of Truncations] \label{independence}In the presence of the
classical Muckenhoupt condition $A_{2}^{\alpha}$, the norm inequality
(\ref{two weight'}) is independent of the choice of truncations used,
including \emph{nonsmooth} truncations as well - see \cite{LaSaShUr3}.
However, in dealing with the Energy Lemma \ref{ener} below, where $\kappa
^{th}$ order Taylor approximations are made on the truncated kernels, it is
necessary to use sufficiently smooth truncations. Similar comments apply to
the Cube Testing conditions.
\end{description}

\subsection{$\kappa$-cube testing conditions}

In this subsection we describe a variety of testing conditions that arise in
the course of our proof, but which do not appear in the statement of our main
Theorem \ref{main}, where only the classical testing condition in
(\ref{Muck and test}) is used.

The $\kappa$\emph{-cube testing conditions} associated with an $\alpha
$-fractional singular integral operator $T^{\alpha}$ are given by%
\begin{align}
\left(  \mathfrak{T}_{T^{\alpha}}^{\left(  \kappa\right)  }\left(
\sigma,\omega\right)  \right)  ^{2}  &  \equiv\sup_{Q\in\mathcal{P}^{n}}%
\max_{0\leq\left\vert \beta\right\vert <\kappa}\frac{1}{\left\vert
Q\right\vert _{\sigma}}\int_{Q}\left\vert T_{\sigma}^{\alpha}\left(
\mathbf{1}_{Q}m_{Q}^{\beta}\right)  \right\vert ^{2}\omega<\infty
,\label{def Kappa polynomial'}\\
\left(  \mathfrak{T}_{\left(  T^{\alpha}\right)  ^{\ast}}^{\left(
\kappa\right)  }\left(  \omega,\sigma\right)  \right)  ^{2}  &  \equiv
\sup_{Q\in\mathcal{P}^{n}}\max_{0\leq\left\vert \beta\right\vert <\kappa}%
\frac{1}{\left\vert Q\right\vert _{\omega}}\int_{Q}\left\vert \left(
T^{\alpha,\ast}\right)  _{\omega}\left(  \mathbf{1}_{Q}m_{Q}^{\beta}\right)
\right\vert ^{2}\sigma<\infty,\nonumber
\end{align}
where $\left(  T^{\alpha,\ast}\right)  _{\omega}=\left(  T_{\sigma}^{\alpha
}\right)  ^{\ast}$, with $m_{Q}^{\beta}\left(  x\right)  \equiv\left(
\frac{x-c_{Q}}{\ell\left(  Q\right)  }\right)  ^{\beta}$ for any cube $Q$ and
multiindex $\beta$, where $c_{Q}$ is the center of the cube $Q$, and where we
interpret the right hand sides as holding uniformly over all sufficiently
smooth truncations of $T^{\alpha}$. Equivalently, in the presence of
$A_{2}^{\alpha}$, we can take a single suitable truncation, see Independence
of Truncations in Subsubsection \ref{independence}.

We also use the \emph{triple }$\kappa$\emph{-cube testing conditions} in which
the integrals are over the triple $3Q$ of $Q$:%
\begin{align}
\left(  \mathfrak{TR}_{T^{\alpha}}^{\left(  \kappa\right)  }\left(
\sigma,\omega\right)  \right)  ^{2}  &  \equiv\sup_{Q\in\mathcal{P}^{n}}%
\max_{0\leq\left\vert \beta\right\vert <\kappa}\frac{1}{\left\vert
Q\right\vert _{\sigma}}\int_{3Q}\left\vert T_{\sigma}^{\alpha}\left(
\mathbf{1}_{Q}m_{Q}^{\beta}\right)  \right\vert ^{2}\omega<\infty
,\label{full testing}\\
\left(  \mathfrak{TR}_{\left(  T^{\alpha}\right)  ^{\ast}}^{\left(
\kappa\right)  }\left(  \omega,\sigma\right)  \right)  ^{2}  &  \equiv
\sup_{Q\in\mathcal{P}^{n}}\max_{0\leq\left\vert \beta\right\vert <\kappa}%
\frac{1}{\left\vert Q\right\vert _{\omega}}\int_{3Q}\left\vert \left(
T^{\alpha,\ast}\right)  _{\omega}\left(  \mathbf{1}_{Q}m_{Q}^{\beta}\right)
\right\vert ^{2}\sigma<\infty.\nonumber
\end{align}
The smaller fractional Poisson integrals $\mathrm{P}_{\kappa}^{\alpha}\left(
Q,\mu\right)  $ used here, in \cite{RaSaWi} and elsewhere, are given by
\begin{equation}
\mathrm{P}_{\kappa}^{\alpha}\left(  Q,\mu\right)  =\int_{\mathbb{R}^{n}}%
\frac{\ell\left(  Q\right)  ^{\kappa}}{\left(  \ell\left(  Q\right)
+\left\vert y-c_{Q}\right\vert \right)  ^{n+\kappa-\alpha}}d\mu\left(
y\right)  ,\ \ \ \ \ \kappa\geq1. \label{def kappa Poisson}%
\end{equation}
The following lemma from \cite[Subsection 4.1 on pages 12-13, especially
Remark 15]{Saw6} was the point of departure for freeing the theory from
reliance on energy conditions when the measures are doubling.

\begin{lemma}
[\cite{Saw6}]\label{doub piv}If $\sigma$ is a doubling measure, then for
sufficiently large $\kappa$ depending on the doubling constant of $\sigma$, we
have
\[
\mathrm{P}_{\kappa}^{\alpha}\left(  Q,\sigma\right)  \approx\frac{\left\vert
Q\right\vert _{\sigma}}{\left\vert Q\right\vert ^{1-\frac{\alpha}{n}}}\text{
and }\mathrm{P}_{\kappa}^{\alpha}\left(  Q,\sigma\right)  ^{2}\left\vert
Q\right\vert _{\omega}\leq CA_{2}^{\alpha}\left(  \sigma,\omega\right)
\left\vert Q\right\vert _{\sigma}.
\]

\end{lemma}

\subsection{Weighted Alpert bases for $L^{2}\left(  \mu\right)  $ and
$L^{\infty}$ control of projections\label{Subsection Haar}}

We now recall the construction of weighted Alpert wavelets in \cite{RaSaWi},
and refer also to \cite{AlSaUr} for the correction of a small oversight in
\cite{RaSaWi}. Let $\mu$ be a locally finite positive Borel measure on
$\mathbb{R}^{n}$, and fix $\kappa\in\mathbb{N}$. For each cube $Q$, denote by
$L_{Q;\kappa}^{2}\left(  \mu\right)  $ the finite dimensional subspace of
$L^{2}\left(  \mu\right)  $ that consists of linear combinations of the
indicators of\ the children $\mathfrak{C}\left(  Q\right)  $ of $Q$ multiplied
by polynomials of degree less than $\kappa$, and such that the linear
combinations have vanishing $\mu$-moments on the cube $Q$ up to order
$\kappa-1$:%
\[
L_{Q;\kappa}^{2}\left(  \mu\right)  \equiv\left\{  f=%
%TCIMACRO{\dsum \limits_{Q^{\prime}\in\mathfrak{C}\left(  Q\right)  }}%
%BeginExpansion
{\displaystyle\sum\limits_{Q^{\prime}\in\mathfrak{C}\left(  Q\right)  }}
%EndExpansion
\mathbf{1}_{Q^{\prime}}p_{Q^{\prime};\kappa}\left(  x\right)  :\int
_{Q}f\left(  x\right)  x^{\beta}d\mu\left(  x\right)  =0,\ \ \ \text{for
}0\leq\left\vert \beta\right\vert <\kappa\right\}  ,
\]
where $p_{Q^{\prime};\kappa}\left(  x\right)  =\sum_{\beta\in\mathbb{Z}%
_{+}^{n}:\left\vert \beta\right\vert \leq\kappa-1\ }a_{Q^{\prime};\beta
}x^{\beta}$ is a polynomial in $\mathbb{R}^{n}$ of degree less than $\kappa$.
Here $x^{\beta}=x_{1}^{\beta_{1}}x_{2}^{\beta_{2}}...x_{n}^{\beta_{n}}$. Let
$d_{Q;\kappa}\equiv\dim L_{Q;\kappa}^{2}\left(  \mu\right)  $ be the dimension
of the finite dimensional linear space $L_{Q;\kappa}^{2}\left(  \mu\right)  $.

Consider an arbitrary dyadic grid $\mathcal{D}$. For $Q\in\mathcal{D}$, let
$\bigtriangleup_{Q;\kappa}^{\mu}$ denote orthogonal projection onto the finite
dimensional subspace $L_{Q;\kappa}^{2}\left(  \mu\right)  $, and let
$\mathbb{E}_{Q;\kappa}^{\mu}$ denote orthogonal projection onto the finite
dimensional subspace%
\[
\mathcal{P}_{Q;\kappa}^{n}\left(  \mu\right)  \equiv
\mathrm{\operatorname*{Span}}\{\mathbf{1}_{Q}x^{\beta}:0\leq\left\vert
\beta\right\vert <\kappa\}.
\]

For a doubling measure $\mu$, it is proved in \cite{RaSaWi}, that we have the
orthonormal decompositions%
\begin{equation}
f=\sum_{Q\in\mathcal{D}}\bigtriangleup_{Q;\kappa}^{\mu}f,\ \ \ \ \ f\in
L_{\mathbb{R}^{n}}^{2}\left(  \mu\right)  ,\ \ \ \ \ \text{where }\left\langle
\bigtriangleup_{P;\kappa}^{\mu}f,\bigtriangleup_{Q;\kappa}^{\mu}f\right\rangle
=0\text{ for }P\neq Q, \label{Alpert expan}%
\end{equation}
where convergence holds both in $L_{\mathbb{R}^{n}}^{2}\left(  \mu\right)  $
norm and pointwise $\mu$-almost everywhere, the telescoping identities%
\begin{equation}
\mathbf{1}_{Q}\sum_{I:\ Q\subsetneqq I\subset P}\bigtriangleup_{I;\kappa}%
^{\mu}=\mathbb{E}_{Q;\kappa}^{\mu}-\mathbf{1}_{Q}\mathbb{E}_{P;\kappa}^{\mu
}\ \text{ \ for }P,Q\in\mathcal{D}\text{ with }Q\subsetneqq P,
\label{telescoping}%
\end{equation}
and the moment vanishing conditions%
\begin{equation}
\int_{\mathbb{R}^{n}}\bigtriangleup_{Q;\kappa}^{\mu}f\left(  x\right)
\ x^{\beta}d\mu\left(  x\right)  =0,\ \ \ \text{for }Q\in\mathcal{D},\text{
}\beta\in\mathbb{Z}_{+}^{n},\ 0\leq\left\vert \beta\right\vert <\kappa\ .
\label{mom con}%
\end{equation}

We have the bound for the Alpert projections $\mathbb{E}_{I;\kappa}^{\mu}$
(\cite[see (4.7) on page 14]{Saw6}):
\begin{equation}
\left\Vert \mathbb{E}_{I;\kappa}^{\mu}f\right\Vert _{L_{I}^{\infty}\left(
\mu\right)  }\lesssim E_{I}^{\mu}\left\vert f\right\vert \leq\sqrt{\frac
{1}{\left\vert I\right\vert _{\mu}}\int_{I}\left\vert f\right\vert ^{2}d\mu
},\ \ \ \ \ \text{for all }f\in L_{\operatorname*{loc}}^{2}\left(  \mu\right)
. \label{analogue}%
\end{equation}
In terms of the Alpert coefficient vectors $\widehat{f}\left(  I\right)
\equiv\left\{  \left\langle f,h_{I;\kappa}^{\mu,a}\right\rangle \right\}
_{a\in\Gamma_{I,n,\kappa}}$ for an orthonormal basis $\left\{  h_{I;\kappa
}^{\mu,a}\right\}  _{a\in\Gamma_{I,n,\kappa}}$ of $L_{I;\kappa}^{2}\left(
\mu\right)  $ where $\Gamma_{I,n,\kappa}$ is a convenient finite index set of
size $d_{Q;\kappa}$, we thus have%
\begin{equation}
\left\vert \widehat{f}\left(  I\right)  \right\vert =\left\Vert \bigtriangleup
_{I;\kappa}^{\sigma}f\right\Vert _{L^{2}\left(  \sigma\right)  }\leq\left\Vert
\bigtriangleup_{I;\kappa}^{\sigma}f\right\Vert _{L^{\infty}\left(
\sigma\right)  }\sqrt{\left\vert I\right\vert _{\sigma}}\leq C\left\Vert
\bigtriangleup_{I;\kappa}^{\sigma}f\right\Vert _{L^{2}\left(  \sigma\right)
}=C\left\vert \widehat{f}\left(  I\right)  \right\vert . \label{analogue'}%
\end{equation}

\subsection{The Pivotal Lemma}

For $0\leq\alpha<n$, let $\mathrm{P}^{\alpha}\left(  J,\mu\right)
\equiv\mathrm{P}_{1}^{\alpha}\left(  J,\mu\right)  $ denote the standard
Poisson integral, where $\mathrm{P}_{m}^{\alpha}\left(  J,\mu\right)  $ is as
defined in (\ref{def kappa Poisson}). The following extension of the `energy
lemma' is due to Rahm, Sawyer and Wick \cite{RaSaWi} in the case the
polynomial $R\left(  x\right)  $ is constant, and this case is proved in
detail in \cite[Lemmas 28 and 29 on pages 27-30]{Saw6}. Note the crucial
assumption below is that $\Psi_{J}$ has at least $2\kappa-1$ vanishing moments
while the polynomial $R$ has degree at most $\kappa-1$.

\begin{lemma}
[\textbf{Pivotal Lemma}]\label{ener}Fix $\kappa\geq1$. Let $J\ $be a cube in
$\mathcal{D}$, and let $\Psi_{J}$ be an $L^{2}\left(  \omega\right)  $
function supported in $J$ with vanishing $\omega$-means of all orders less
than $2\kappa$. Let $R\left(  x\right)  $ be a polynomial of degree less than
$\kappa$ that satisfies $\sup_{x\in J}\left\vert R\left(  x\right)
\right\vert \leq1$. Let $\sigma$ be a positive measure supported in
$\mathbb{R}^{n}\setminus\gamma J$ with $\gamma>1$, and let $T^{\alpha}$ be a
standard $\alpha$-fractional singular integral operator with $0\leq\alpha<n$.
Then we have the `pivotal' bound%
\begin{equation}
\left\vert \left\langle RT^{\alpha}\left(  \varphi\sigma\right)  ,\Psi
_{J}\right\rangle _{L^{2}\left(  \omega\right)  }\right\vert \lesssim
C_{\gamma}\mathrm{P}_{\kappa}^{\alpha}\left(  J,\nu\right)  \sqrt{\left\vert
J\right\vert _{\omega}}\left\Vert \Psi_{J}\right\Vert _{L^{2}\left(
\omega\right)  }\lesssim C_{\gamma}\sqrt{A_{2}^{\alpha}\left(  \sigma
,\omega\right)  }\sqrt{\left\vert J\right\vert _{\nu}}\left\Vert \Psi
_{J}\right\Vert _{L^{2}\left(  \omega\right)  }, \label{piv bound}%
\end{equation}
for any function $\varphi$ with $\left\vert \varphi\right\vert \leq1$.
\end{lemma}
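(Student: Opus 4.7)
My plan is to apply Fubini to bring the integration against $d\nu$ outside, writing
\begin{equation*}
\langle RT^{\alpha}(\varphi\nu),\Psi_{J}\rangle_{L^{2}(\omega)}=\int\varphi(y)\left(\int_{J}R(x)K^{\alpha}(x,y)\Psi_{J}(x)\,d\omega(x)\right)d\nu(y),
\end{equation*}
which is valid since $\operatorname{supp}\nu$ is separated from $J$ by a factor of $\gamma$. For each fixed $y\in\mathbb{R}^{n}\setminus\gamma J$ I would exploit the moment cancellation of $\Psi_{J}$: since $\Psi_{J}$ annihilates every polynomial in $x$ of degree less than $\kappa$, I may subtract from $R(x)K^{\alpha}(x,y)$ any such polynomial in $x$ without changing the inner integral.

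The natural choice is to Taylor-expand the kernel $K^{\alpha}(\cdot,y)$ in the first variable about $c_{J}$ to order $\kappa$, writing $K^{\alpha}(x,y)=T_{\kappa}^{c_{J}}K^{\alpha}(\cdot,y)(x)+R_{\kappa}(x,y)$. By the smoothness condition \eqref{sizeandsmoothness'}, the Taylor remainder satisfies
\begin{equation*}
|R_{\kappa}(x,y)|\lesssim_{\gamma}\frac{\ell(J)^{\kappa}}{(\ell(J)+|y-c_{J}|)^{n+\kappa-\alpha}},\qquad x\in J,\ y\notin\gamma J,
\end{equation*}
which is precisely the integrand of $\mathrm{P}_{\kappa}^{\alpha}(J,\nu)$. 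Combined with $|R|\leq 1$ on $J$ and Cauchy--Schwarz against $\Psi_{J}$ in $L^{2}(\omega)$, the contribution of the remainder piece $R(x)R_{\kappa}(x,y)$ is bounded by $C_{\gamma}\,\mathrm{P}_{\kappa}^{\alpha}(J,\nu)\sqrt{|J|_{\omega}}\,\|\Psi_{J}\|_{L^{2}(\omega)}$, which is exactly the target.

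The main obstacle is handling the polynomial piece $R(x)\cdot T_{\kappa}^{c_{J}}K^{\alpha}(\cdot,y)(x)$: because $R$ already has degree up to $\kappa-1$, the product is a polynomial in $(x-c_{J})$ of total degree up to $2\kappa-2$. Its monomials of degree less than $\kappa$ are annihilated by the moments of $\Psi_{J}$, but the residual high-degree monomials (of degree in $[\kappa,2\kappa-2]$) survive and require care. I would expand $R(x)=\sum_{|a|<\kappa}r_{a}(x-c_{J})^{a}$ with $|r_{a}|\lesssim\ell(J)^{-|a|}$ by Markov's inequality applied to $\sup_{J}|R|\leq 1$, use the coefficient bound $|\partial_{x}^{b}K^{\alpha}(c_{J},y)|\lesssim(\ell(J)+|y-c_{J}|)^{\alpha-|b|-n}$, and estimate each residual cross term against $\Psi_{J}$ via Cauchy--Schwarz. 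The key geometric input is that for $y\notin\gamma J$ one has $\ell(J)/(\ell(J)+|y-c_{J}|)\leq\gamma^{-1}$, so summation over the residual multi-indices telescopes geometrically, absorbing the overshoot in Poisson order into a factor $C_{\gamma}$ and yielding the $\mathrm{P}_{\kappa}^{\alpha}$ form. The case of constant $R$ recovers the pivotal/energy lemma proved in detail in \cite[Lemmas 28 and 29]{Saw6}, and the present extension follows the Rahm--Sawyer--Wick framework with the polynomial multiplier $R$ tracked carefully through each step.
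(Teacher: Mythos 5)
Your scaffolding --- Fubini, a $\kappa$-th order Taylor expansion about $c_J$, and the moment cancellation of $\Psi_J$ --- matches the paper's route, which is to Taylor-expand the product $x\mapsto R(x)K^\alpha(x,y)$ directly about $c_J$; after Leibniz this is the same bookkeeping as your split into a pure-remainder piece plus residual cross terms. You also correctly locate the crux: for nonconstant $R$ the product of $R$ with the degree-$(<\kappa)$ Taylor polynomial of $K^\alpha(\cdot,y)$ has monomials of degree in $[\kappa,2\kappa-2]$ which survive the moment cancellation of $\Psi_J$ and must be estimated directly.

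However, your disposal of those residual cross terms does not close. With $|r_a|\lesssim\ell(J)^{-|a|}$ by Markov on $J$, $|\partial_x^b K^\alpha(c_J,y)|\lesssim(\ell(J)+|y-c_J|)^{\alpha-|b|-n}$, and $\bigl|\int_J(x-c_J)^{a+b}\Psi_J\,d\omega\bigr|\lesssim\ell(J)^{|a|+|b|}\sqrt{|J|_\omega}\,\|\Psi_J\|_{L^2(\omega)}$, the powers $\ell(J)^{\pm|a|}$ cancel, and each cross term with $|a|+|b|\ge\kappa$ and $|a|,|b|<\kappa$ contributes the integrand of $\mathrm{P}_{|b|}^\alpha(J,\nu)$, where $|b|$ can be as small as $1$ (namely when $|a|=\kappa-1$). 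Your claim that the geometric factor $\ell(J)/(\ell(J)+|y-c_J|)\le\gamma^{-1}$ lets you ``absorb the overshoot in Poisson order into a factor $C_\gamma$'' runs the inequality in the wrong direction: converting $\mathrm{P}_{|b|}^\alpha$ into $\mathrm{P}_\kappa^\alpha$ requires multiplying by $\bigl(\ell(J)/(\ell(J)+|y-c_J|)\bigr)^{\kappa-|b|}$, a factor that makes things \emph{smaller}, so in fact $\mathrm{P}_{m}^\alpha(J,\nu)\ge c_\gamma^{\kappa-m}\mathrm{P}_{\kappa}^\alpha(J,\nu)$ for $m<\kappa$ and the ratio $\mathrm{P}_m^\alpha/\mathrm{P}_\kappa^\alpha$ is unbounded as the mass of $\nu$ recedes from $J$. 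Thus the residual cross terms are not controlled by $C_\gamma\,\mathrm{P}_\kappa^\alpha(J,\nu)$ using only $\sup_J|R|\le 1$ and Markov on $J$; this is a genuine gap, not a constant-bookkeeping issue. Note that the paper's own one-sentence justification (``replace the Taylor expansion of $K$ by that of $RK$ and the estimates remain true with possibly larger constants'') glosses over exactly the same point, so a careful write-up would have to bring in additional quantitative information about the derivatives of $R$ relative to the scale at which $\nu$ lives, which the Markov bound on $J$ alone does not supply.
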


To obtain Lemma \ref{ener} in the case of a general polynomial $R\left(
x\right)  $ of degree at most $\kappa$, note that we \textbf{cannot} simply
replace the Taylor expansion of the function $x\rightarrow K\left(
x,y\right)  $ that arises in the proof in \cite[Lemmas 28 and 29 on pages
27-30]{Saw6}, by the Taylor expansion of the function $x\rightarrow R\left(
x\right)  K\left(  x,y\right)  $, since this latter kernel no longer satisfies
Calder\'{o}n-Zygmund estimates in the $x$ variable. For example, if all of the
derivatives in $\partial_{x}^{\beta}\left[  R\left(  x\right)  K\left(
x,y\right)  \right]  $ hit the polynomial $R\left(  x\right)  =x^{\beta}$,
then we get
\[
\left\vert \left[  \partial_{x}^{\beta}R\left(  x\right)  \right]  K\left(
x,y\right)  \right\vert \sim\beta!\left\vert K\left(  x,y\right)  \right\vert
\lesssim\left\vert x-y\right\vert ^{\alpha-n},
\]
which can be much larger than the Calder\'{o}n-Zygmund bound
$C_{\operatorname*{CZ}}\left\vert x-y\right\vert ^{\alpha-n-\left\vert
\beta\right\vert }$ for $\left\vert x-y\right\vert >1$.

On the other hand, the function $R\left(  x\right)  \Psi_{J}\left(  x\right)
$ is supported in $J$ and has at least $\left(  2\kappa-1\right)  -\left(
\kappa-1\right)  =\kappa$ vanishing moments since the degree of $R\left(
x\right)  $ is at most $\kappa-1$, and there are at least $2\kappa-1$
vanishing moments in $\Psi_{J}\left(  x\right)  $. Indeed,%
\[
\int R\left(  x\right)  \Psi_{J}\left(  x\right)  x^{\beta}dx=\int\Psi
_{J}\left(  x\right)  \left[  R\left(  x\right)  x^{\beta}\right]  dx=0
\]
since $R\left(  x\right)  x^{\beta}$ is a polynomial of degree less than
$2\kappa$. Thus the argument in \cite{Saw6} applies to prove the Pivotal Lemma
\ref{ener}.

We also recall from \cite[Lemma 33]{Saw6} the following Poisson estimate, that
is a straightforward extension of the case $m=1$ due to Nazarov, Treil and
Volberg in \cite{NTV4}. This lemma is the key to exploiting the crucial
reduction to $\operatorname{good}$ cubes $J$ that we use below, see
\cite{NTV4} and \cite{NTV}.

\begin{lemma}
\label{Poisson inequality}Fix $m\geq1$. Suppose that $J\subset I\subset K$ and
that $\operatorname*{dist}\left(  J,\partial I\right)  >2\sqrt{n}\ell\left(
J\right)  ^{\varepsilon}\ell\left(  I\right)  ^{1-\varepsilon}$. Then%
\begin{equation}
\mathrm{P}_{m}^{\alpha}(J,\sigma\mathbf{1}_{K\setminus I})\lesssim\left(
\frac{\ell\left(  J\right)  }{\ell\left(  I\right)  }\right)  ^{m-\varepsilon
\left(  n+m-\alpha\right)  }\mathrm{P}_{m}^{\alpha}(I,\sigma\mathbf{1}%
_{K\setminus I}). \label{e.Jsimeq}%
\end{equation}

\end{lemma}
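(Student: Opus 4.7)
The plan is to reduce (\ref{e.Jsimeq}) to a single pointwise kernel comparison: for every $y\in K\setminus I$,
\begin{equation*}
\frac{\ell\left(J\right)^{m}}{\bigl(\ell\left(J\right)+\left|y-c_{J}\right|\bigr)^{n+m-\alpha}}\lesssim\left(\frac{\ell\left(J\right)}{\ell\left(I\right)}\right)^{m-\varepsilon\left(n+m-\alpha\right)}\frac{\ell\left(I\right)^{m}}{\bigl(\ell\left(I\right)+\left|y-c_{I}\right|\bigr)^{n+m-\alpha}},
\end{equation*}
since integrating both sides against $\sigma\mathbf{1}_{K\setminus I}$ then gives (\ref{e.Jsimeq}). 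Canceling $\left(\ell\left(J\right)/\ell\left(I\right)\right)^{m}$ on both sides, this is equivalent to establishing
\begin{equation*}
\ell\left(I\right)+\left|y-c_{I}\right|\lesssim\left(\frac{\ell\left(I\right)}{\ell\left(J\right)}\right)^{\varepsilon}\bigl(\ell\left(J\right)+\left|y-c_{J}\right|\bigr)\qquad\text{for all }y\in K\setminus I.
\end{equation*}

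The first step is to activate the hypothesis $\operatorname{dist}\left(J,\partial I\right)>2\sqrt{n}\,\ell\left(J\right)^{\varepsilon}\ell\left(I\right)^{1-\varepsilon}$. Because $J\subset I$ and $y\notin I$, the line segment from $y$ to any point of $J$ must cross $\partial I$, whence
\begin{equation*}
\left|y-c_{J}\right|\geq\operatorname{dist}\left(y,J\right)\geq\operatorname{dist}\left(J,\partial I\right)>2\sqrt{n}\,\ell\left(J\right)^{\varepsilon}\ell\left(I\right)^{1-\varepsilon}.
\end{equation*}
Multiplying by $\left(\ell\left(I\right)/\ell\left(J\right)\right)^{\varepsilon}$ yields $\left(\ell\left(I\right)/\ell\left(J\right)\right)^{\varepsilon}\left|y-c_{J}\right|>2\sqrt{n}\,\ell\left(I\right)$, while $\ell\left(J\right)\leq\ell\left(I\right)$ trivially gives $\left(\ell\left(I\right)/\ell\left(J\right)\right)^{\varepsilon}\left|y-c_{J}\right|\geq\left|y-c_{J}\right|$. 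Averaging these two lower bounds produces
\begin{equation*}
\left(\frac{\ell\left(I\right)}{\ell\left(J\right)}\right)^{\varepsilon}\left|y-c_{J}\right|\gtrsim\ell\left(I\right)+\left|y-c_{J}\right|.
\end{equation*}

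The final step is to swap $c_{J}$ for $c_{I}$ on the right. The inclusion $J\subset I$ forces $\left|c_{I}-c_{J}\right|\leq\tfrac{\sqrt{n}}{2}\ell\left(I\right)$, so the triangle inequality gives $\left|y-c_{I}\right|\leq\left|y-c_{J}\right|+\tfrac{\sqrt{n}}{2}\ell\left(I\right)$, and this additive $\ell\left(I\right)$ correction is absorbed into the $\ell\left(I\right)$ already present on the right-hand side of the previous display. A fortiori, replacing the left-hand $\left|y-c_{J}\right|$ by $\ell\left(J\right)+\left|y-c_{J}\right|$ only strengthens the inequality, which closes the pointwise estimate. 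There is no real analytic obstacle here: the entire argument is elementary triangle-inequality bookkeeping, and the only point that requires attention is checking that the numerical constant $2\sqrt{n}$ in the hypothesis is large enough to absorb the loss $\tfrac{\sqrt{n}}{2}\ell\left(I\right)$ coming from recentering. The passage from the $m=1$ case in \cite{NTV4} is routine because every estimate scales uniformly in the combined exponent $n+m-\alpha$.
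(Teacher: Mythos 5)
Your proof is correct: the pointwise kernel comparison you reduce to is exactly the standard NTV-style argument behind this lemma, which the paper does not reprove but simply cites from \cite[Lemma 33]{Saw6} as a straightforward extension of the $m=1$ case in \cite{NTV4}, and your use of $\left\vert y-c_{J}\right\vert \geq \operatorname*{dist}\left(  J,\partial I\right)  $ together with the recentering $\left\vert c_{I}-c_{J}\right\vert \leq \frac{\sqrt{n}}{2}\ell\left(  I\right)  $ is precisely how that proof goes. The only (harmless) point worth noting is that raising your pointwise bound to the power $n+m-\alpha$ makes the implied constant depend on $m,n,\alpha$, which is consistent with the lemma since these parameters are fixed.
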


\section{The Calder\'{o}n-Zygmund corona decomposition}

To set the stage for control of the stopping form below in the absence of the
energy condition, we construct the \emph{Calder\'{o}n-Zygmund} corona
decomposition for a function $f$ in $L^{2}\left(  \sigma\right)  $ that is
supported in a dyadic cube $F_{1}^{0}$. Fix $\Gamma>1$ and define
$\mathcal{G}_{0}=\left\{  F_{1}^{0}\right\}  $ to consist of the single cube
$F_{1}^{0}$, and define the first generation $\mathcal{G}_{1}=\left\{
F_{k}^{1}\right\}  _{k}$ of \emph{CZ stopping children} of $F_{1}^{0}$ to be
the \emph{maximal} dyadic subcubes $I$ of $F_{0}$ satisfying%
\[
E_{I}^{\sigma}\left\vert f\right\vert \geq\Gamma E_{F_{1}^{0}}^{\sigma
}\left\vert f\right\vert .
\]
Then define the second generation $\mathcal{G}_{2}=\left\{  F_{k}^{2}\right\}
_{k}$ of CZ stopping children of $F_{1}^{0}$ to be the \emph{maximal} dyadic
subcubes $I$ of some $F_{k}^{1}\in\mathcal{G}_{1}$ satisfying%
\[
E_{I}^{\sigma}\left\vert f\right\vert \geq\Gamma E_{F_{k}^{1}}^{\sigma
}\left\vert f\right\vert .
\]
Continue by recursion to define $\mathcal{G}_{n}$ for all $n\geq0$, and then
set
\[
\mathcal{F\equiv}%
%TCIMACRO{\dbigcup \limits_{n=0}^{\infty}}%
%BeginExpansion
{\displaystyle\bigcup\limits_{n=0}^{\infty}}
%EndExpansion
\mathcal{G}_{n}=\left\{  F_{k}^{n}:n\geq0,k\geq1\right\}
\]
to be the set of all CZ $\kappa$-pivotal stopping intervals in $F_{1}^{0}$
obtained in this way.

The $\sigma$-Carleson condition for $\mathcal{F}$ follows as usual from the
first step,%
\[
\sum_{F^{\prime}\in\mathfrak{C}_{\mathcal{F}}\left(  F\right)  }\left\vert
F^{\prime}\right\vert _{\sigma}\leq\frac{1}{\Gamma}\sum_{F^{\prime}%
\in\mathfrak{C}_{\mathcal{F}}\left(  F\right)  }\left\{  \mathrm{P}_{\kappa
}^{\alpha}\left(  F^{\prime},\mathbf{1}_{F}\sigma\right)  ^{2}\left\vert
F^{\prime}\right\vert _{\omega}+\frac{1}{E_{F}^{\sigma}\left\vert f\right\vert
}\int_{F^{\prime}}\left\vert f\right\vert d\sigma\right\}  \leq\frac{1}%
{\Gamma}\left(  A_{2}^{\alpha}\left(  \sigma,\omega\right)  +1\right)
\left\vert F\right\vert _{\sigma}.
\]
Moreover, if we define
\begin{equation}
\alpha_{\mathcal{F}}\left(  F\right)  \equiv\sup_{F^{\prime}\in\mathcal{F}%
:\ F\subset F^{\prime}}E_{F^{\prime}}^{\sigma}\left\vert f\right\vert ,
\label{def alpha}%
\end{equation}
then in each corona
\[
\mathcal{C}_{F}\equiv\left\{  I\in\mathcal{D}:I\subset F\text{ and
}I\not \subset F^{\prime}\text{ for any }F^{\prime}\in\mathcal{F}\text{ with
}F^{\prime}\varsubsetneqq F\right\}  ,
\]
we have, from the definition of the stopping times, the average control%
\begin{equation}
E_{I}^{\sigma}\left\vert f\right\vert <\Gamma\alpha_{\mathcal{F}}\left(
F\right)  ,\ \ \ \ \ I\in\mathcal{C}_{F}\text{ and }F\in\mathcal{F}.
\label{average control}%
\end{equation}
Finally, as in \cite{NTV4}, \cite{LaSaShUr3} and \cite{SaShUr7}, we obtain the
Carleson condition and the quasiorthogonality inequality,%
\begin{equation}
\sum_{F^{\prime}\preceq F}\left\vert F^{\prime}\right\vert _{\sigma}\leq
C_{0}\left\vert F\right\vert _{\sigma}\text{ for all }F\in\mathcal{F}%
;\text{\ and }\sum_{F\in\mathcal{F}}\alpha_{\mathcal{F}}\left(  F\right)
^{2}\left\vert F\right\vert _{\sigma}\mathbf{\leq}C_{0}^{2}\left\Vert
f\right\Vert _{L^{2}\left(  \sigma\right)  }^{2}. \label{Car and quasi}%
\end{equation}

Define the two corona projections%
\[
\mathsf{P}_{\mathcal{C}_{F}}^{\sigma}\equiv\sum_{I\in\mathcal{C}_{F}%
}\bigtriangleup_{I;\kappa}^{\sigma}\text{ and }\mathsf{P}_{\mathcal{C}%
_{F}^{\mathbf{\tau}-\operatorname*{shift}}}^{\omega}\equiv\sum_{J\in
\mathcal{C}_{F}^{\mathbf{\tau}-\operatorname*{shift}}}\bigtriangleup
_{J;\kappa}^{\omega}\ ,
\]
where%
\begin{align}
\mathcal{C}_{F}^{\mathbf{\tau}-\operatorname*{shift}}  &  \equiv\left[
\mathcal{C}_{F}\setminus\mathcal{N}_{\mathcal{D}}^{\mathbf{\tau}}\left(
F\right)  \right]  \cup%
%TCIMACRO{\dbigcup \limits_{F^{\prime}\in\mathfrak{C}_{\mathcal{F}}\left(
%F\right)  }}%
%BeginExpansion
{\displaystyle\bigcup\limits_{F^{\prime}\in\mathfrak{C}_{\mathcal{F}}\left(
F\right)  }}
%EndExpansion
\left[  \mathcal{N}_{\mathcal{D}}^{\mathbf{\tau}}\left(  F^{\prime}\right)
\setminus\mathcal{N}_{\mathcal{D}}^{\mathbf{\tau}}\left(  F\right)  \right]
;\label{def shift}\\
\text{where }\mathcal{N}_{\mathcal{D}}^{\mathbf{\tau}}\left(  F\right)   &
\equiv\left\{  J\in\mathcal{D}:J\subset F\text{ and }\ell\left(  J\right)
>2^{-\mathbf{\tau}}\ell\left(  F\right)  \right\}  ,\nonumber
\end{align}
and note that $f=\sum_{F\in\mathcal{F}}\mathsf{P}_{\mathcal{C}_{F}}^{\sigma}%
f$. Thus the corona $\mathcal{C}_{F}^{\mathbf{\tau}-\operatorname*{shift}}$
has the top $\mathbf{\tau}$ levels from $\mathcal{C}_{F}$ removed, and
includes the first $\mathbf{\tau}$ levels from each of its $\mathcal{F}%
$-children, except if they have already been removed.

\begin{remark}
\label{pd}The shifted coronas are pairwise disjoint, since if $F^{\prime
\prime},F\in\mathcal{F}$ and $J\in\mathcal{C}_{F}^{\mathbf{\tau}%
-\operatorname*{shift}}\cap\mathcal{C}_{F^{\prime\prime}}^{\mathbf{\tau
}-\operatorname*{shift}}$, then either $F^{\prime\prime}\subset F$ or
$F\subset F^{\prime\prime}$. Thus it suffices to assume $F^{\prime\prime
}\subsetneqq F$ and derive a contradiction. But then $J\subset F^{\prime
\prime}$ and by (\ref{def shift}) and the assumption that $J\in\mathcal{C}%
_{F}^{\mathbf{\tau}-\operatorname*{shift}}$, we have $J\in\mathcal{N}%
_{\mathcal{D}}^{\mathbf{\tau}}\left(  F^{\prime}\right)  \setminus
\mathcal{N}_{\mathcal{D}}^{\mathbf{\tau}}\left(  F\right)  $ for some
$F^{\prime}\in\mathfrak{C}_{\mathcal{F}}\left(  F\right)  $. Thus $J\subset
F^{\prime\prime}\subset F^{\prime}\subset F$, and the assumption that
$J\in\mathcal{C}_{F^{\prime\prime}}^{\mathbf{\tau}-\operatorname*{shift}}$
implies that $J\not \in \mathcal{N}_{\mathcal{D}}^{\mathbf{\tau}}\left(
F^{\prime\prime}\right)  $, contradicting $J\in\mathcal{N}_{\mathcal{D}%
}^{\mathbf{\tau}}\left(  F^{\prime}\right)  $ with $F^{\prime\prime}\subset
F^{\prime}$.
\end{remark}

The main result we need from \cite{Saw6} regarding these coronas is the
Intertwining Proposition.

\begin{proposition}
[{The Intertwining Proposition \cite[see Subsection 6.4]{Saw6}}]%
\label{strongly adapted}Suppose that $\mathcal{F}$ satisfies both%
\[
\sum_{F^{\prime}\in\mathcal{F}:F^{\prime}\subset F}\left\vert F^{\prime
}\right\vert _{\sigma}\leq C_{0}\left\vert F\right\vert _{\sigma}\text{ for
all }F\in\mathcal{F},\text{\ and }\sum_{F\in\mathcal{F}}\alpha_{\mathcal{F}%
}\left(  F\right)  ^{2}\left\vert F\right\vert _{\sigma}\mathbf{\leq}C_{0}%
^{2}\left\Vert f\right\Vert _{L^{2}\left(  \sigma\right)  }^{2},
\]
where $\alpha_{\mathcal{F}}\left(  F\right)  $ is as in (\ref{def alpha}), and
that%
\[
\left\Vert \bigtriangleup_{I;\kappa}^{\sigma}f\right\Vert _{L^{\infty}\left(
\sigma\right)  }\leq C\alpha_{\mathcal{F}}\left(  F\right)  ,\ \ \ \ \ f\in
L^{2}\left(  \sigma\right)  ,\ I\in\mathcal{C}_{F}.
\]
Then for $\operatorname*{good}$ functions $f\in L^{2}\left(  \sigma\right)  $
and $g\in L^{2}\left(  \omega\right)  $, and with $\kappa\geq1$, we have%
\[
\left\vert \sum_{F\in\mathcal{F}}\ \sum_{I:\ I\supsetneqq F}\ \left\langle
T_{\sigma}^{\alpha}\bigtriangleup_{I;\kappa}^{\sigma}f,\mathsf{P}%
_{\mathcal{C}_{F}^{\mathbf{\tau}-\operatorname*{shift}}}^{\omega
}g\right\rangle _{\omega}\right\vert \lesssim\left(  \sqrt{A_{2}^{\alpha}%
}+\mathfrak{T}_{T^{\alpha}}^{\kappa}\right)  \ \left\Vert f\right\Vert
_{L^{2}\left(  \sigma\right)  }\left\Vert g\right\Vert _{L^{2}\left(
\omega\right)  }.
\]

\end{proposition}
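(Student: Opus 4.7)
For each $F \in \mathcal{F}$, set $\Phi_F \equiv \sum_{I : F \subsetneq I} \bigtriangleup_{I;\kappa}^\sigma f$, so the quantity to control is $\sum_F \langle T_\sigma^\alpha \Phi_F, \mathsf{P}^\omega_{\mathcal{C}_F^{\mathbf{\tau}-\operatorname*{shift}}} g\rangle_\omega$. Since $\mathsf{P}^\omega_{\mathcal{C}_F^{\mathbf{\tau}-\operatorname*{shift}}} g$ is $\omega$-supported in $F$, the natural decomposition is $\Phi_F = \mathbf{1}_F \Phi_F + \mathbf{1}_{F^c}\Phi_F$. The telescoping identity (\ref{telescoping}) rewrites $\mathbf{1}_F \Phi_F = \mathbb{E}_{F;\kappa}^\sigma f - \mathbf{1}_F \mathbb{E}_{F_1^0;\kappa}^\sigma f$, a polynomial of degree $<\kappa$ on $F$; combining (\ref{analogue}) with the $L^\infty$ hypothesis of the proposition, its coefficients in the basis $\{m_F^\beta\}_{|\beta|<\kappa}$ are controlled by a constant times $\alpha_\mathcal{F}(F)$.

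\textbf{Near piece.} Expanding $\mathbf{1}_F\Phi_F$ in the $\{m_F^\beta\}$ basis and applying the $\kappa$-cube testing inequality (\ref{def Kappa polynomial'}) monomial by monomial (using that the test function is $\omega$-supported in $F$), then Cauchy-Schwarz in $L^2(\omega)$, yields
\begin{equation*}
\sum_F \bigl|\langle T_\sigma^\alpha(\mathbf{1}_F\Phi_F), \mathsf{P}^\omega_{\mathcal{C}_F^{\mathbf{\tau}-\operatorname*{shift}}} g\rangle_\omega\bigr| \lesssim \mathfrak{T}_{T^\alpha}^\kappa \Big(\sum_F \alpha_\mathcal{F}(F)^2 |F|_\sigma\Big)^{1/2} \Big(\sum_F \|\mathsf{P}^\omega_{\mathcal{C}_F^{\mathbf{\tau}-\operatorname*{shift}}} g\|^2_{L^2(\omega)}\Big)^{1/2},
\end{equation*}
and the quasiorthogonality inequality (\ref{Car and quasi}) together with the bounded overlap of the $\tau$-shifted coronas closes this to $\mathfrak{T}_{T^\alpha}^\kappa\|f\|_{L^2(\sigma)}\|g\|_{L^2(\omega)}$.

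\textbf{Far piece.} For each good $J \in \mathcal{C}_F^{\mathbf{\tau}-\operatorname*{shift}}$, goodness forces $\gamma J \subset F$, so $\mathbf{1}_{F^c}\Phi_F\,\sigma$ is supported off $\gamma J$ and the Pivotal Lemma \ref{ener} applies, producing
\begin{equation*}
\bigl|\langle T_\sigma^\alpha(\mathbf{1}_{F^c}\Phi_F), \bigtriangleup^\omega_{J;\kappa} g\rangle_\omega\bigr| \lesssim \mathrm{P}_\kappa^\alpha(J, |\Phi_F|\mathbf{1}_{F^c}\sigma)\sqrt{|J|_\omega}\,\|\bigtriangleup^\omega_{J;\kappa} g\|_{L^2(\omega)}.
\end{equation*}
To estimate the Poisson integral, apply the telescoping identity (\ref{telescoping}) on each child $Q$ of the minimal dyadic ancestor $I_x \supsetneq F$ containing a point $x \in F^c$: on such $Q$, $\mathbf{1}_Q\Phi_F = \mathbb{E}^\sigma_{Q;\kappa}f - \mathbb{E}^\sigma_{F_1^0;\kappa}f$, so $|\Phi_F(x)| \lesssim E^\sigma_{Q}|f| + \alpha_\mathcal{F}(F_1^0)$ pointwise. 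After Cauchy-Schwarz in the double sum over $(F,J)$ and a dyadic reorganization of the shells around each $J$, the contribution from $E^\sigma_Q|f|$ reduces to the $\kappa$-pivotal estimate (\ref{both pivotal k}) inside each corona $\mathcal{C}_F$ (via (\ref{energy control})), producing the $\mathcal{V}_2^{\alpha,\kappa}$ factor; the residual constant polynomial $\mathbb{E}^\sigma_{F_1^0;\kappa}f$ acting against $\mathbf{1}_{F^c}\sigma$ is absorbed by a classical Poisson--$A_2^\alpha$ bound, producing the $\sqrt{A_2^\alpha}$ factor.

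\textbf{Main obstacle.} The subtle step is controlling $|\Phi_F|\mathbf{1}_{F^c}\sigma$ well enough to feed the Poisson sum into (\ref{both pivotal k}): since $\Phi_F$ is an infinite superposition of Alpert projections, a crude triangle-inequality sum of $L^\infty$ bounds diverges, so one must telescope on every sibling-strip disjoint from $F$ and then align the resulting local polynomial bounds with the corona/stopping structure so that the Poisson sum fits the hypothesis of (\ref{both pivotal k}). The fact that the Alpert polynomial factors do not commute with $T^\alpha$ is precisely what forces the $\kappa$-th order Poisson $\mathrm{P}_\kappa^\alpha$ in the Pivotal Lemma (rather than the classical $\mathrm{P}_1^\alpha$), and hence the higher-order pivotal condition throughout.
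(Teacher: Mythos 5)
Your skeleton (split $\Phi_F$ into $\mathbf{1}_F\Phi_F+\mathbf{1}_{F^c}\Phi_F$, handle the inside part by telescoping, the $L^\infty$ bound $\lesssim\alpha_{\mathcal F}(F)$, polynomial cube testing, Cauchy--Schwarz and quasiorthogonality, and the outside part by goodness plus the Pivotal Lemma) is the same strategy as the proof this paper actually invokes by citation from \cite[Subsection 6.4]{Saw6}, and your treatment of the near piece is essentially complete and correct. The problem is the far piece, which is the real content of the proposition, and there your argument has a genuine gap rather than just compressed detail. You propose to close the Poisson sum by appealing to the corona pivotal control (\ref{energy control}) ``inside each corona $\mathcal C_F$,'' but (\ref{energy control}) bounds $\mathrm{P}_{\kappa}^{\alpha}\left(I,\mathbf{1}_{F}\sigma\right)^{2}\left\vert I\right\vert_{\omega}$, i.e.\ the Poisson integral of mass \emph{inside} $F$, whereas the measure produced by your application of Lemma \ref{ener} is $\mathbf{1}_{F^{c}}\left\vert \Phi_{F}\right\vert \sigma$, whose mass lies entirely \emph{outside} $F$. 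Neither (\ref{energy control}) nor a naive use of (\ref{both pivotal k}) applies to that configuration; what is needed is the decomposition of $F^{c}$ into the shells $\pi^{k+1}F\setminus\pi^{k}F$, the Poisson decay of Lemma \ref{Poisson inequality} (valid because $J\Subset_{\mathbf r,\varepsilon}\pi^{k}F$ by goodness and $\tau>\mathbf r$) to gain a geometric factor in $k$ and in $\ell(J)/\ell(F)$, and only then the pivotal condition and $A_{2}^{\alpha}$ after interchanging the sums in $F$ and in the shells.

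A second, related defect is your pointwise bound on $\mathbf{1}_{F^{c}}\Phi_{F}$. Telescoping on the off-$F$ child $Q$ of the minimal ancestor gives $\left\vert \Phi_{F}\right\vert \lesssim E_{Q}^{\sigma}\left\vert f\right\vert + E_{F_{1}^{0}}^{\sigma}\left\vert f\right\vert$, but the cubes $Q$ are disjoint from $F$ at scales $\geq\ell(F)$ and need not lie in coronas whose stopping data are dominated by $\alpha_{\mathcal F}(F)$: if such a $Q$ is itself a stopping cube (or sits in the corona of a stopping cube disjoint from $F$), then $E_{Q}^{\sigma}\left\vert f\right\vert$ is controlled only by $\alpha_{\mathcal F}(Q)$, which can be much larger than $\alpha_{\mathcal F}(F)$. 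Consequently you cannot factor out $\alpha_{\mathcal F}(F)$ and apply quasiorthogonality in $F$ alone; the cited proof handles this by reorganizing the double sum so that quasiorthogonality (\ref{Car and quasi}) is applied to the \emph{external} coronas as well, with the Poisson decay in the scale gap paying for the multiplicity. Your ``Main obstacle'' paragraph correctly identifies that this alignment of exterior local bounds with the corona structure is the crux, but the proposal defers it rather than carrying it out, and the mechanism it names for resolving it is the wrong one. (As a minor point, the $\kappa$-th order Poisson kernel enters through the $\kappa$ vanishing $\omega$-moments of $\bigtriangleup_{J;\kappa}^{\omega}g$ in Lemma \ref{ener} and the need for Lemma \ref{doub piv} under doubling, not through the non-commutation of Alpert polynomials with $T^{\alpha}$, which is an issue for the commutator form, not for this proposition.)
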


\subsection{Parallel corona decompositions}

In this subsection, we recall certain material on parallel corona
decompositions from \cite{Saw6}. Strictly speaking, we will not use any of
this material in our paper, but it is included here as it motivates a key
construction used later on for the `above' stopping form. Let $\left(
C_{0},\mathcal{A},\alpha_{\mathcal{A}}\right)  $ constitute stopping data for
$f\in L^{2}\left(  \sigma\right)  $,\ and let $\left(  C_{0},\mathcal{B}%
,\alpha_{\mathcal{B}}\right)  $ constitute stopping data for $g\in
L^{2}\left(  \omega\right)  $ as in the previous subsubsection. We now
organize the bilinear form,%
\begin{align*}
\left\langle T_{\sigma}^{\alpha}f,g\right\rangle _{\omega}  &  =\left\langle
T_{\sigma}^{\alpha}\left(  \sum_{I\in\mathcal{D}}\bigtriangleup_{I;\kappa_{1}%
}^{\sigma}f\right)  ,\left(  \sum_{J\in\mathcal{D}}\bigtriangleup
_{J;\kappa_{2}}^{\omega}g\right)  \right\rangle _{\omega}=\sum_{I\in
\mathcal{D}\ \text{and }J\in\mathcal{D}}\left\langle T_{\sigma}^{\alpha
}\left(  \bigtriangleup_{I;\kappa_{1}}^{\sigma}f\right)  ,\left(
\bigtriangleup_{J;\kappa_{2}}^{\omega}g\right)  \right\rangle _{\omega}\\
&  =\sum_{\left(  A,B\right)  \in\mathcal{A}\times\mathcal{B}}\sum
_{I\in\mathcal{C}_{\mathcal{A}}\left(  A\right)  \text{ and }J\in
\mathcal{C}_{\mathcal{B}}\left(  B\right)  }\left\langle T_{\sigma}^{\alpha
}\left(  \bigtriangleup_{I;\kappa_{1}}^{\sigma}f\right)  ,\left(
\bigtriangleup_{J;\kappa_{2}}^{\omega}g\right)  \right\rangle _{\omega}%
=\sum_{\left(  A,B\right)  \in\mathcal{A}\times\mathcal{B}}\left\langle
T_{\sigma}^{\alpha}\left(  \mathsf{P}_{\mathcal{C}_{\mathcal{A}}\left(
A\right)  }^{\sigma}f\right)  ,\mathsf{P}_{\mathcal{C}_{\mathcal{B}}\left(
B\right)  }^{\omega}g\right\rangle _{\omega}\ ,
\end{align*}
as a sum over the families of Calder\'{o}n-Zygmund stopping cubes
$\mathcal{A}$ and $\mathcal{B}$, and then decompose this sum by the
\emph{Parallel Corona decomposition},\emph{\ }in which the `diagonal cut' in
the bilinear form is made according to the relative positions of intersecting
coronas, rather than the traditional way of making the `diagonal cut'
according to relative side lengths of cubes. See e.g. \cite{Saw6} for more on
the parallel corona decomposition.

We have%
\begin{align}
&  \left\langle T_{\sigma}^{\alpha}f,g\right\rangle _{\omega}=\sum_{\left(
A,B\right)  \in\mathcal{A}\times\mathcal{B}}\left\langle T_{\sigma}^{\alpha
}\left(  \mathsf{P}_{\mathcal{C}_{\mathcal{A}}\left(  A\right)  }^{\sigma
}f\right)  ,\mathsf{P}_{\mathcal{C}_{\mathcal{B}}\left(  B\right)  }^{\omega
}g\right\rangle _{\omega}\label{parallel corona decomp'}\\
&  =\left\{  \sum_{\left(  A,B\right)  \in\operatorname{Near}\left(
\mathcal{A}\times\mathcal{B}\right)  }+\sum_{\left(  A,B\right)
\in\operatorname{Disjoint}\left(  \mathcal{A}\times\mathcal{B}\right)  }%
+\sum_{\left(  A,B\right)  \in\operatorname{Far}\left(  \mathcal{A}%
\times\mathcal{B}\right)  }\right\}  \left\langle T_{\sigma}^{\alpha}\left(
\mathsf{P}_{\mathcal{C}_{\mathcal{A}}\left(  A\right)  }^{\sigma}f\right)
,\mathsf{P}_{\mathcal{C}_{\mathcal{B}}\left(  B\right)  }^{\omega
}g\right\rangle _{\omega}\nonumber\\
&  \equiv\operatorname{Near}\left(  f,g\right)  +\operatorname{Disjoint}%
\left(  f,g\right)  +\operatorname{Far}\left(  f,g\right)  .\nonumber
\end{align}
Here $\operatorname{Near}\left(  \mathcal{A}\times\mathcal{B}\right)  $ is the
set of pairs $\left(  A,B\right)  \in\mathcal{A}\times\mathcal{B}$ such that
one of $A,B$ is contained in the other, and there is no $A_{1}\in\mathcal{A}$
with $B\subset A_{1}\subsetneqq A$, nor is there $B_{1}\in\mathcal{B}$ with
$A\subset B_{1}\subsetneqq B$. The set $\operatorname{Disjoint}\left(
\mathcal{A}\times\mathcal{B}\right)  $ is the set of pairs $\left(
A,B\right)  \in\mathcal{A}\times\mathcal{B}$ such that $A\cap B=\emptyset$.
The set $\operatorname{Far}\left(  \mathcal{A}\times\mathcal{B}\right)  $ is
the complement of $\operatorname{Near}\left(  \mathcal{A}\times\mathcal{B}%
\right)  \cup\operatorname{Disjoint}\left(  \mathcal{A}\times\mathcal{B}%
\right)  $ in $\mathcal{A}\times\mathcal{B}$:%
\[
\operatorname{Far}\left(  \mathcal{A}\times\mathcal{B}\right)  =\left(
\mathcal{A}\times\mathcal{B}\right)  \setminus\left\{  \operatorname{Near}%
\left(  \mathcal{A}\times\mathcal{B}\right)  \cup\operatorname{Disjoint}%
\left(  \mathcal{A}\times\mathcal{B}\right)  \right\}  .
\]
Note that if $\left(  A,B\right)  \in\operatorname{Far}\left(  \mathcal{A}%
\times\mathcal{B}\right)  $, then \textbf{either} $B\subset A^{\prime}$ for
some $A^{\prime}\in\mathfrak{C}_{\mathcal{A}}\left(  A\right)  $, \textbf{or}
$A\subset B^{\prime}$ for some $B^{\prime}\in\mathfrak{C}_{\mathcal{B}}\left(
B\right)  $.

We further decompose the near form $\operatorname{Near}\left(  f,g\right)  $
into%
\begin{align*}
\operatorname{Near}\left(  f,g\right)   &  =\left\{  \sum_{\substack{\left(
A,B\right)  \in\operatorname{Near}\left(  \mathcal{A}\times\mathcal{B}\right)
\\B\subset A}}+\sum_{\substack{\left(  A,B\right)  \in\operatorname{Near}%
\left(  \mathcal{A}\times\mathcal{B}\right)  \\A\subset B}}\right\}
\left\langle T_{\sigma}^{\alpha}\left(  \mathsf{P}_{\mathcal{C}_{\mathcal{A}%
}\left(  A\right)  }^{\sigma}f\right)  ,\mathsf{P}_{\mathcal{C}_{\mathcal{B}%
}\left(  B\right)  }^{\omega}g\right\rangle _{\omega}\\
&  =\operatorname{Near}_{\operatorname*{below}}\left(  f,g\right)
+\operatorname{Near}_{\operatorname*{above}}\left(  f,g\right)  .
\end{align*}
The $\operatorname{Near}_{\operatorname*{below}}\left(  f,g\right)  $ form can
be controlled by the Indicator/Cube Testing condition (\ref{ind}) if we define
projections%
\[
\mathsf{Q}_{A}^{\omega}g\equiv\sum_{\substack{B\in\mathcal{B}:\ \left(
A,B\right)  \in\operatorname{Near}\left(  \mathcal{A}\times\mathcal{B}\right)
\\B\subset A}}\mathsf{P}_{\mathcal{C}_{\mathcal{B}}\left(  B\right)  }%
^{\omega}g,
\]
and observe that, while the Alpert support of $\mathsf{Q}_{A}^{\omega}$ need
not be contained in the corona $\mathcal{C}_{\mathcal{A}}\left(  A\right)  $,
these projections are nevertheless mutually orthogonal in the index
$A\in\mathcal{A}$, since for $\left(  A,B\right)  \in\operatorname{Near}%
\left(  \mathcal{A}\times\mathcal{B}\right)  $ there is no $A_{1}%
\in\mathcal{A}$ with $B\subset A_{1}\subsetneqq A$. Indeed,
\begin{align}
&  \left\vert \operatorname{Near}_{\operatorname*{below}}\left(  f,g\right)
\right\vert =\sum_{A\in\mathcal{A}}\left\vert \left\langle T_{\sigma}^{\alpha
}\mathsf{P}_{\mathcal{C}_{\mathcal{A}}\left(  A\right)  }^{\sigma}%
f,\mathsf{Q}_{A}^{\omega}g\right\rangle _{\omega}\right\vert
\label{near porism}\\
&  \leq\sum_{A\in\mathcal{A}}\left\Vert T_{\sigma}^{\alpha}\mathsf{P}%
_{\mathcal{C}_{\mathcal{A}}\left(  A\right)  }^{\sigma}f\right\Vert
_{L^{2}\left(  \omega\right)  }\left\Vert \mathsf{Q}_{A}^{\omega}g\right\Vert
_{L^{2}\left(  \omega\right)  }\lesssim\mathfrak{T}_{T^{\alpha}}%
^{\operatorname*{ind}}\left(  \sigma,\omega\right)  \sum_{A\in\mathcal{A}%
}\alpha_{\mathcal{A}}\left(  A\right)  \sqrt{\left\vert A\right\vert _{\sigma
}}\left\Vert \mathsf{Q}_{A}^{\omega}g\right\Vert _{L^{2}\left(  \omega\right)
}\nonumber\\
&  \leq\mathfrak{T}_{T^{\alpha}}^{\operatorname*{ind}}\left(  \sigma
,\omega\right)  \left(  \sum_{A\in\mathcal{A}}\alpha_{\mathcal{A}}\left(
A\right)  ^{2}\left\vert A\right\vert _{\sigma}\right)  ^{\frac{1}{2}}\left(
\sum_{A\in\mathcal{A}}\left\Vert \mathsf{Q}_{A}^{\omega}g\right\Vert
_{L^{2}\left(  \omega\right)  }^{2}\right)  ^{\frac{1}{2}}\lesssim
\mathfrak{T}_{T^{\alpha}}^{\operatorname*{ind}}\left(  \sigma,\omega\right)
\left\Vert f\right\Vert _{L^{2}\left(  \sigma\right)  }\left\Vert g\right\Vert
_{L^{2}\left(  \omega\right)  }\ ,\nonumber
\end{align}
by quasi-orthogonality and the fact that the projections $\mathsf{Q}%
_{A}^{\omega}$ are mutually orthogonal in the index $A\in\mathcal{A}$.

\section{Reduction of the proof to local forms}

The proof of Theorem \ref{main} will require significant new arguments beyond
those in \cite{Saw6}. In particular, we will use the shifted corona
decomposition as in \cite{LaSaShUr3} and \cite{SaShUr7}, instead of the
parallel corona decomposition used in \cite{Saw6}, and construct a paraproduct
- stopping - neighbour decomposition of NTV type for a local Alpert form, but
complicated by the fact that singular integrals do not in general commute with
multiplication by polynomials.

To prove Theorem \ref{main}, we begin by proving the bilinear form bound,%
\[
\left\vert \left\langle T_{\sigma}^{\alpha}f,g\right\rangle _{\omega
}\right\vert \lesssim\left(  \sqrt{A_{2}^{\alpha}\left(  \sigma,\omega\right)
}+\mathfrak{TR}_{T^{\alpha}}^{\left(  \kappa\right)  }\left(  \sigma
,\omega\right)  +\mathfrak{TR}_{\left(  T^{\alpha}\right)  ^{\ast}}^{\left(
\kappa\right)  }\left(  \omega,\sigma\right)  \right)  \ \left\Vert
f\right\Vert _{L^{2}\left(  \sigma\right)  }\left\Vert g\right\Vert
_{L^{2}\left(  \omega\right)  },
\]
for functions $f\in L^{2}\left(  \sigma\right)  $ and $g\in L^{2}\left(
\omega\right)  $. Following the weighted Haar expansions of Nazarov, Treil and
Volberg, we write $f$ and $g$ in weighted Alpert wavelet expansions,%
\begin{equation}
\left\langle T_{\sigma}^{\alpha}f,g\right\rangle _{\omega}=\left\langle
T_{\sigma}^{\alpha}\left(  \sum_{I\in\mathcal{D}}\bigtriangleup_{I;\kappa_{1}%
}^{\sigma}f\right)  ,\left(  \sum_{J\in\mathcal{D}}\bigtriangleup
_{J;\kappa_{2}}^{\omega}g\right)  \right\rangle _{\omega}=\sum_{I\in
\mathcal{D}\ \text{and }J\in\mathcal{D}}\left\langle T_{\sigma}^{\alpha
}\left(  \bigtriangleup_{I;\kappa_{1}}^{\sigma}f\right)  ,\left(
\bigtriangleup_{J;\kappa_{2}}^{\omega}g\right)  \right\rangle _{\omega}\ .
\label{expand}%
\end{equation}
Then the sum is further decomposed by first \emph{Cube Size Splitting}, then
using the \emph{Shifted Corona Decomposition}, according to the
\emph{Canonical Splitting}. We assume the reader is reasonably familiar with
the notation and arguments in the first eight sections of \cite{SaShUr7}. The
$n$-dimensional decompositions used in \cite{SaShUr7} are in spirit the same
as the one-dimensional decompositions in \cite{LaSaShUr3}, as well as the
$n$-dimensional decompositions in \cite{LaWi}, but differ in significant details.

A fundamental result of Nazarov, Treil and Volberg \cite{NTV} is that all the
cubes $I$ and $J$ appearing in the bilinear form above may be assumed to be
$\left(  \mathbf{r},\varepsilon\right)  -\operatorname*{good}$\footnote{See
also \cite[Subsection 3.1]{SaShUr10} for a treatment using finite collections
of grids, in which case the conditional probability arguments are
elementary.}, where a dyadic interval $K$ is $\left(  \mathbf{r}%
,\varepsilon\right)  -\operatorname*{good}$, or simply $\operatorname*{good}$,
if for \emph{every} dyadic supercube $L$ of $K$, it is the case that
\textbf{either} $K$ has side length at least $2^{1-\mathbf{r}}$ times that of
$L$, \textbf{or} $K\Subset_{\left(  \mathbf{r},\varepsilon\right)  }L$. We say
that a dyadic cube $K$ is $\left(  \mathbf{r},\varepsilon\right)
$-\emph{deeply embedded} in a dyadic cube $L$, or simply $\mathbf{r}%
$\emph{-deeply embedded} in $L$, which we write as $K\Subset_{\mathbf{r}%
,\varepsilon}L$, when $K\subset L$ and both
\begin{align}
\ell\left(  K\right)   &  \leq2^{-\mathbf{r}}\ell\left(  L\right)
,\label{def deep embed}\\
\operatorname*{dist}\left(  K,%
%TCIMACRO{\dbigcup \limits_{L^{\prime}\in\mathfrak{C}_{\mathcal{D}}L}}%
%BeginExpansion
{\displaystyle\bigcup\limits_{L^{\prime}\in\mathfrak{C}_{\mathcal{D}}L}}
%EndExpansion
\partial L^{\prime}\right)   &  \geq2\ell\left(  K\right)  ^{\varepsilon}%
\ell\left(  L\right)  ^{1-\varepsilon}.\nonumber
\end{align}

Here is a brief schematic diagram derived from \cite{SaShUr7} summarizing the
shifted corona decompositions that we will follow using Alpert wavelet
expansions below. We first introduce parameters as in \cite{SaShUr7}. We will
choose $\varepsilon>0$ sufficiently small later in the argument, and then
$\mathbf{r}$ must be chosen sufficiently large depending on $\varepsilon$ in
order to reduce matters to $\left(  \mathbf{r},\varepsilon\right)
-\operatorname{good}$ functions by the NTV argument in \cite{NTV}.

\begin{definition}
\label{def parameters}The parameters $\mathbf{\tau}$ and $\mathbf{\rho}$ are
fixed to satisfy
\[
\mathbf{\tau}>\mathbf{r}\text{ and }\mathbf{\rho}>\mathbf{r}+\mathbf{\tau},
\]
where $\mathbf{r}$ is the goodness parameter already fixed.
\end{definition}

Here is a brief diagram highlighting the various decompositions of the
bilinear form $\left\langle T_{\sigma}^{\alpha}f,g\right\rangle _{\omega}$. We
will treat the below form $\mathsf{B}_{\subset_{\mathbf{\rho},\varepsilon}%
}\left(  f,g\right)  $ in detail under the assumption that $\kappa_{2}%
\geq2\kappa_{1}$, and then turn to the above form $\mathsf{B}_{\supset
_{\mathbf{\rho},\varepsilon}}\left(  f,g\right)  $, which is handled in the
same way except for the diagonal form, which requires the \emph{above}
indicator / cube testing condition due to the asymmetry in the assumption
$\kappa_{2}\geq2\kappa_{1}$.%

\[
\fbox{$%
\begin{array}
[c]{ccccccc}%
\left\langle T_{\sigma}^{\alpha}f,g\right\rangle _{\omega} &  &  &  &  &  & \\
\downarrow &  &  &  &  &  & \\
\mathsf{B}_{\subset_{\mathbf{\rho},\varepsilon}}\left(  f,g\right)  & + &
\mathsf{B}_{\supset_{\mathbf{\rho},\varepsilon}}\left(  f,g\right)  & + &
\mathsf{B}_{\cap}\left(  f,g\right)  & + & \mathsf{B}_{\diagup}\left(
f,g\right) \\
\downarrow &  &  &  &  &  & \\
\mathsf{T}_{\operatorname*{diagonal}}^{\subset_{\mathbf{\rho},\varepsilon}%
}\left(  f,g\right)  & + & \mathsf{T}_{\operatorname*{far}%
\operatorname*{below}}^{\subset_{\mathbf{\rho},\varepsilon}}\left(  f,g\right)
& + & \mathsf{T}_{\operatorname*{far}\operatorname*{above}}^{\subset
_{\mathbf{\rho},\varepsilon}}\left(  f,g\right)  & + & \mathsf{T}%
_{\operatorname*{disjoint}}^{\subset_{\mathbf{\rho},\varepsilon}}\left(
f,g\right) \\
\downarrow &  & \downarrow &  &  &  & \\
\mathsf{T}_{\operatorname*{diagonal}}^{\subset_{\mathbf{\rho},\varepsilon}%
,F}\left(  f,g\right)  &  & \mathsf{T}_{\operatorname*{far}%
\operatorname*{below}}^{1}\left(  f,g\right)  & + & \mathsf{T}%
_{\operatorname*{far}\operatorname*{below}}^{2}\left(  f,g\right)  &  & \\
\downarrow &  &  &  &  &  & \\
\mathsf{T}_{\operatorname{stop}}^{\subset_{\mathbf{\rho},\varepsilon}%
,F}\left(  f,g\right)  & + & \mathsf{T}_{\operatorname{paraproduct}}%
^{\subset_{\mathbf{\rho},\varepsilon}F}\left(  f,g\right)  & + &
\mathsf{T}_{\operatorname{neighbour}}^{\subset_{\mathbf{\rho},\varepsilon}%
,F}\left(  f,g\right)  & + & \mathsf{B}_{\operatorname*{commutator}}%
^{\subset_{\mathbf{\rho},\varepsilon},F}\left(  f,g\right)
\end{array}
$}.
\]

\subsection{Cube Size Splitting}

The NTV \emph{Cube Size Splitting} of the inner product $\left\langle
T_{\sigma}^{\alpha}f,g\right\rangle _{\omega}$ given in (\ref{expand}) splits
the pairs of cubes $\left(  I,J\right)  $ in a simultaneous Alpert
decomposition of $f$ and $g$ into four groups determined by relative position,
and is given by%
\begin{align}
\left\langle T_{\sigma}^{\alpha}f,g\right\rangle _{\omega}  &  =%
%TCIMACRO{\dsum \limits_{\substack{I,J\in\mathcal{D}\\J\subset_{\mathbf{\rho
%},\varepsilon}I}}}%
%BeginExpansion
{\displaystyle\sum\limits_{\substack{I,J\in\mathcal{D}\\J\subset
_{\mathbf{\rho},\varepsilon}I}}}
%EndExpansion
\left\langle T_{\sigma}^{\alpha}\left(  \bigtriangleup_{I;\kappa}^{\sigma
}f\right)  ,\left(  \bigtriangleup_{J;\kappa}^{\omega}g\right)  \right\rangle
_{\omega}+%
%TCIMACRO{\dsum \limits_{\substack{I,J\in\mathcal{D}\\J\supset_{\mathbf{\rho
%},\varepsilon}I}}}%
%BeginExpansion
{\displaystyle\sum\limits_{\substack{I,J\in\mathcal{D}\\J\supset
_{\mathbf{\rho},\varepsilon}I}}}
%EndExpansion
\left\langle T_{\sigma}^{\alpha}\left(  \bigtriangleup_{I;\kappa}^{\sigma
}f\right)  ,\left(  \bigtriangleup_{J;\kappa}^{\omega}g\right)  \right\rangle
_{\omega}\label{cube size}\\
&  +%
%TCIMACRO{\dsum \limits_{\substack{I,J\in\mathcal{D}\\J\cap I=\emptyset\text{
%and }\frac{\ell\left(  J\right)  }{\ell\left(  I\right)  }\notin\left[
%2^{-\mathbf{\rho}},2^{\mathbf{\rho}}\right]  }}}%
%BeginExpansion
{\displaystyle\sum\limits_{\substack{I,J\in\mathcal{D}\\J\cap I=\emptyset
\text{ and }\frac{\ell\left(  J\right)  }{\ell\left(  I\right)  }\notin\left[
2^{-\mathbf{\rho}},2^{\mathbf{\rho}}\right]  }}}
%EndExpansion
\left\langle T_{\sigma}^{\alpha}\left(  \bigtriangleup_{I;\kappa}^{\sigma
}f\right)  ,\left(  \bigtriangleup_{J;\kappa}^{\omega}g\right)  \right\rangle
_{\omega}+%
%TCIMACRO{\dsum \limits_{\substack{I,J\in\mathcal{D}\\2^{-\mathbf{\rho}}%
%\leq\frac{\ell\left(  J\right)  }{\ell\left(  I\right)  }\leq2^{\mathbf{\rho}%
%}}}}%
%BeginExpansion
{\displaystyle\sum\limits_{\substack{I,J\in\mathcal{D}\\2^{-\mathbf{\rho}}%
\leq\frac{\ell\left(  J\right)  }{\ell\left(  I\right)  }\leq2^{\mathbf{\rho}%
}}}}
%EndExpansion
\left\langle T_{\sigma}^{\alpha}\left(  \bigtriangleup_{I;\kappa}^{\sigma
}f\right)  ,\left(  \bigtriangleup_{J;\kappa}^{\omega}g\right)  \right\rangle
_{\omega}\nonumber\\
&  =\mathsf{B}_{\subset_{\mathbf{\rho},\varepsilon}}\left(  f,g\right)
+\mathsf{B}_{\supset_{\mathbf{\rho},\varepsilon}}\left(  f,g\right)
+\mathsf{B}_{\cap}\left(  f,g\right)  +\mathsf{B}_{\diagup}\left(  f,g\right)
,\nonumber
\end{align}
where the final four forms are referred to as the \emph{below}, \emph{above},
\emph{intersection} and \emph{comparable} forms respectively. The assumption
that the cubes $I$ and $J$ in the Alpert supports of $f$ and $g$ are $\left(
\mathbf{r},\varepsilon\right)  -\operatorname*{good}$ remains in force
throughout the proof. We also define the sublinear \emph{intersection} and
\emph{comparable }forms%
\begin{align*}
\left\vert \mathsf{B}_{\cap}\right\vert \left(  f,g\right)   &  \equiv%
%TCIMACRO{\dsum \limits_{\substack{I,J\in\mathcal{D}\\J\cap I=\emptyset\text{
%and }\frac{\ell\left(  J\right)  }{\ell\left(  I\right)  }\notin\left[
%2^{-\mathbf{\rho}},2^{\mathbf{\rho}}\right]  }}}%
%BeginExpansion
{\displaystyle\sum\limits_{\substack{I,J\in\mathcal{D}\\J\cap I=\emptyset
\text{ and }\frac{\ell\left(  J\right)  }{\ell\left(  I\right)  }\notin\left[
2^{-\mathbf{\rho}},2^{\mathbf{\rho}}\right]  }}}
%EndExpansion
\left\vert \left\langle T_{\sigma}^{\alpha}\left(  \bigtriangleup_{I;\kappa
}^{\sigma}f\right)  ,\left(  \bigtriangleup_{J;\kappa}^{\omega}g\right)
\right\rangle _{\omega}\right\vert ,\\
\left\vert \mathsf{B}_{\diagup}\right\vert \left(  f,g\right)   &  \equiv%
%TCIMACRO{\dsum \limits_{\substack{I,J\in\mathcal{D}\\2^{-\mathbf{\rho}}%
%\leq\frac{\ell\left(  J\right)  }{\ell\left(  I\right)  }\leq2^{\mathbf{\rho}%
%}}}}%
%BeginExpansion
{\displaystyle\sum\limits_{\substack{I,J\in\mathcal{D}\\2^{-\mathbf{\rho}}%
\leq\frac{\ell\left(  J\right)  }{\ell\left(  I\right)  }\leq2^{\mathbf{\rho}%
}}}}
%EndExpansion
\left\vert \left\langle T_{\sigma}^{\alpha}\left(  \bigtriangleup_{I;\kappa
}^{\sigma}f\right)  ,\left(  \bigtriangleup_{J;\kappa}^{\omega}g\right)
\right\rangle _{\omega}\right\vert ,
\end{align*}
in which absolute values are placed inside the sum. We have the following
bound for the sublinear \emph{intersection} and \emph{comparable} forms from
\cite[see Lemma 31]{Saw6}, which in turn followed the NTV arguments for Haar
wavelets in \cite[see the proof of Lemma 7.1]{SaShUr7} (see also
\cite{LaSaShUr3}),%
\begin{equation}
\left\vert \mathsf{B}_{\cap}\right\vert \left(  f,g\right)  +\left\vert
\mathsf{B}_{\diagup}\right\vert \left(  f,g\right)  \leq C\left(
\mathfrak{T}_{T^{\alpha}}^{\kappa_{1}}+\mathfrak{T}_{T^{\alpha,\ast}}%
^{\kappa_{2}}+\mathcal{WBP}_{T^{\alpha}}^{\left(  \kappa_{1},\kappa
_{2}\right)  }\left(  \sigma,\omega\right)  +\sqrt{A_{2}^{\alpha}}\right)
\left\Vert f\right\Vert _{L^{2}\left(  \sigma\right)  }\left\Vert g\right\Vert
_{L^{2}\left(  \omega\right)  }, \label{routine}%
\end{equation}
where if $\Omega$ is the set of all dyadic grids,
\[
\mathcal{WBP}_{T^{\alpha}}^{\left(  \kappa_{1},\kappa_{2}\right)  }\left(
\sigma,\omega\right)  \equiv\sup_{\mathcal{D}\in\Omega}\sup
_{\substack{Q,Q^{\prime}\in\mathcal{D}\\Q\subset3Q^{\prime}\setminus
Q^{\prime}\text{ or }Q^{\prime}\subset3Q\setminus Q}}\frac{1}{\sqrt{\left\vert
Q\right\vert _{\sigma}\left\vert Q^{\prime}\right\vert _{\omega}}}%
\sup_{\substack{f\in\left(  \mathcal{P}_{Q}^{\kappa_{1}}\right)
_{\operatorname*{norm}}\left(  \sigma\right)  \\g\in\left(  \mathcal{P}%
_{Q^{\prime}}^{\kappa_{2}}\right)  _{\operatorname*{norm}}\left(
\omega\right)  }}\left\vert \int_{Q^{\prime}}T_{\sigma}^{\alpha}\left(
\mathbf{1}_{Q}f\right)  \ gd\omega\right\vert <\infty
\]
is a weak boundedness constant introduced in \cite{Saw6}. This constant will
be removed in the final section below using the following bound proved in
\cite[see (6.25) in Subsection 6.7 and note that only triple testing is needed
there by choosing $\ell\left(  Q^{\prime}\right)  \leq\ell\left(  Q\right)  $
(using duality and $T^{\alpha,\ast}$ if needed)]{Saw6},
\begin{equation}
\mathcal{WBP}_{T^{\alpha}}^{\left(  \kappa_{1},\kappa_{2}\right)  }\left(
\sigma,\omega\right)  \leq C_{\kappa,\varepsilon}\left(  \mathfrak{TR}%
_{T^{\alpha}}^{\left(  \kappa_{1}\right)  }\left(  \sigma,\omega\right)
+\mathfrak{TR}_{T^{\alpha,\ast}}^{\left(  \kappa_{2}\right)  }\left(
\omega,\sigma\right)  \right)  . \label{unif bound'}%
\end{equation}
Since the \emph{below} and \emph{above} forms $\mathsf{B}_{\subset
_{\mathbf{\rho},\varepsilon}}\left(  f,g\right)  ,\mathsf{B}_{\supset
_{\mathbf{\rho},\varepsilon}}\left(  f,g\right)  $ are symmetric, matters are
reduced to proving%
\[
\left\vert \mathsf{B}_{\subset_{\mathbf{\rho},\varepsilon}}\left(  f,g\right)
\right\vert \lesssim\left(  \mathfrak{T}_{T^{\alpha}}^{\kappa_{1}%
}+\mathfrak{T}_{T^{\alpha,\ast}}^{\kappa_{2}}+\sqrt{A_{2}^{\alpha}}\right)
\left\Vert f\right\Vert _{L^{2}\left(  \sigma\right)  }\left\Vert g\right\Vert
_{L^{2}\left(  \omega\right)  }\ .
\]

\subsection{Shifted Corona Decomposition}

For this we recall the \emph{Shifted Corona Decomposition}, as opposed to the
\emph{parallel} corona decomposition used in \cite{Saw6}, associated with the
Calder\'{o}n-Zygmund $\kappa$-pivotal stopping cubes $\mathcal{F}$ introduced
above. But first we must invoke standard arguments, using the $\kappa$-cube
testing conditions (\ref{full testing}),\ to permit us to assume that $f$ and
$g$ are supported in a finite union of dyadic cubes $F_{0}$ on which they have
vanishing moments of order less than $\kappa$.

\subsubsection{The initial reduction using testing}

For this construction, we will follow the treatment as given in
\cite{SaShUr12}. We first restrict $f$ and $g$ to be supported in a large
common cube $Q_{\infty}$. Then we cover $Q_{\infty}$ with $2^{n}$ pairwise
disjoint cubes $I_{\infty}\in\mathcal{D}$ with $\ell\left(  I_{\infty}\right)
=\ell\left(  Q_{\infty}\right)  $. We now claim we can reduce matters to
consideration of the $2^{2n}$ forms%
\[
\sum_{I\in\mathcal{D}:\ I\subset I_{\infty}}\sum_{J\in\mathcal{D}:\ J\subset
J_{\infty}}\int\left(  T_{\sigma}^{\alpha}\bigtriangleup_{I;\kappa_{1}%
}^{\sigma}f\right)  \bigtriangleup_{J;\kappa_{2}}^{\omega}gd\omega,
\]
as both $I_{\infty}$ and $J_{\infty}$ range over the dyadic cubes as above.
First we note that when $I_{\infty}$ and $J_{\infty}$ are distinct, the
corresponding form is included in the sum $\mathsf{B}_{\cap}\left(
f,g\right)  +\mathsf{B}_{\diagup}\left(  f,g\right)  $, and hence controlled.
Thus it remains to consider the forms with $I_{\infty}=J_{\infty}$ and use the
cubes $I_{\infty}$ as the starting cubes in our corona construction below.
Indeed, we have from (\ref{Alpert expan}) that%
\[
f=\sum_{I\in\mathcal{D}:\ I\subset I_{\infty}}\bigtriangleup_{I;\kappa_{1}%
}^{\sigma}f+\mathbb{E}_{I_{\infty};\kappa_{1}}^{\sigma}f\ \ \ \ \text{ and
\ \ \ \ }g=\sum_{J\in\mathcal{D}:\ J\subset I_{\infty}}\bigtriangleup
_{J;\kappa_{2}}^{\omega}g+\mathbb{E}_{I_{\infty};\kappa_{2}}^{\omega}g,
\]
which can then be used to write the bilinear form $\int\left(  T_{\sigma
}f\right)  gd\omega$ as a sum of the forms%
\begin{align}
&  \ \ \ \ \ \ \ \ \ \ \ \ \ \ \ \int\left(  T_{\sigma}f\right)  gd\omega
=\sum_{I_{\infty}}\left\{  \sum_{I,J\in\mathcal{D}:\ I,J\subset I_{\infty}%
}\int\left(  T_{\sigma}^{\alpha}\bigtriangleup_{I;\kappa_{1}}^{\sigma
}f\right)  \bigtriangleup_{J;\kappa_{2}}^{\omega}gd\omega\right.
\label{sum of forms}\\
&  \left.  +\sum_{I\in\mathcal{D}:\ I\subset I_{\infty}}\int\left(  T_{\sigma
}^{\alpha}\bigtriangleup_{I;\kappa_{1}}^{\sigma}f\right)  \mathbb{E}%
_{I_{\infty};\kappa_{2}}^{\omega}gd\omega+\sum_{J\in\mathcal{D}:\ J\subset
I_{\infty}}\int\left(  T_{\sigma}^{\alpha}\mathbb{E}_{I_{\infty};\kappa_{1}%
}^{\sigma}f\right)  \bigtriangleup_{J;\kappa_{2}}^{\omega}gd\omega+\int\left(
T_{\sigma}^{\alpha}\mathbb{E}_{I_{\infty};\kappa_{1}}^{\sigma}f\right)
\mathbb{E}_{I_{\infty};\kappa_{2}}^{\omega}gd\omega\right\}  ,\nonumber
\end{align}
taken over the $2^{n}$ cubes $I_{\infty}$ above.

The second, third and fourth sums in (\ref{sum of forms}) can be controlled by
the $\kappa$-testing conditions (\ref{full testing}), e.g. using
Cauchy-Schwarz,%
\begin{align}
&  \left\vert \sum_{I\in\mathcal{D}:\ I\subset I_{\infty}}\int\left(
T_{\sigma}^{\alpha}\bigtriangleup_{I;\kappa_{1}}^{\sigma}f\right)
\mathbb{E}_{I_{\infty};\kappa_{2}}^{\omega}gd\omega\right\vert \leq\left\Vert
\sum_{I\in\mathcal{D}:\ I\subset I_{\infty}}\bigtriangleup_{I;\kappa_{1}%
}^{\sigma}f\right\Vert _{L^{2}\left(  \sigma\right)  }\left\Vert
\mathbb{E}_{I_{\infty};\kappa_{2}}^{\omega}g\right\Vert _{L^{\infty}%
}\left\Vert \mathbf{1}_{I_{\infty}}T_{\omega}^{\alpha,\ast}\left(
\frac{\mathbb{E}_{I_{\infty};\kappa_{2}}^{\omega}g}{\left\Vert \mathbb{E}%
_{I_{\infty};\kappa_{2}}^{\omega}g\right\Vert _{L^{\infty}}}\right)
\right\Vert _{L^{2}\left(  \sigma\right)  }\label{top control}\\
&  \ \ \ \ \ \ \ \ \ \ \ \ \ \ \ \lesssim\left\Vert f\right\Vert
_{L^{2}\left(  \sigma\right)  }\frac{\left\vert \widehat{g}\left(  I_{\infty
}\right)  \right\vert }{\sqrt{\left\vert I_{\infty}\right\vert _{\omega}}%
}\mathfrak{T}_{T_{\omega}^{\alpha,\ast}}^{\kappa_{2}}\sqrt{\left\vert
I_{\infty}\right\vert _{\omega}}\leq\mathfrak{T}_{T_{\omega}^{\alpha,\ast}%
}^{\kappa_{2}}\left\Vert f\right\Vert _{L^{2}\left(  \sigma\right)
}\left\Vert g\right\Vert _{L^{2}\left(  \omega\right)  }\ ,\nonumber
\end{align}
and similarly for the third and fourth sum.

\subsubsection{The shifted corona}

Recall the shifted corona $\mathcal{C}_{\mathcal{F}}^{\mathbf{\tau
}-\operatorname*{shift}}\left(  F\right)  $ defined in (\ref{def shift}). A
simple but important property is the fact that the $\mathbf{\tau}$-shifted
coronas $\mathcal{C}_{F}^{\mathbf{\tau}-\operatorname*{shift}}$ have overlap
bounded by $\mathbf{\tau}$:%
\begin{equation}
\sum_{F\in\mathcal{F}}\mathbf{1}_{\mathcal{C}_{F}^{\mathbf{\tau}%
-\operatorname*{shift}}}\left(  J\right)  \leq\mathbf{\tau},\ \ \ \ \ J\in
\mathcal{D}. \label{tau overlap}%
\end{equation}
It is convenient, for use in the canonical splitting below, to introduce the
following shorthand notation for $F,G\in\mathcal{F}$:%
\[
\left\langle T_{\sigma}^{\alpha}\left(  \mathsf{P}_{\mathcal{C}_{F}}^{\sigma
}f\right)  ,\mathsf{P}_{\mathcal{C}_{G}^{\mathbf{\tau}-\operatorname*{shift}}%
}^{\omega}g\right\rangle _{\omega}^{\subset_{\mathbf{\rho},\varepsilon}}%
\equiv\sum_{\substack{I\in\mathcal{C}_{F}\text{ and }J\in\mathcal{C}%
_{G}^{\mathbf{\tau}-\operatorname*{shift}}\\J\subset_{\mathbf{\rho
},\varepsilon}I}}\left\langle T_{\sigma}^{\alpha}\left(  \bigtriangleup
_{I;\kappa_{1}}^{\sigma}f\right)  ,\left(  \bigtriangleup_{J;\kappa_{2}%
}^{\omega}g\right)  \right\rangle _{\omega}\ .
\]

\subsection{Canonical Splitting}

We then proceed with the \emph{Canonical Splitting} of $\mathsf{B}%
_{\subset_{\mathbf{\rho},\varepsilon}}\left(  f,g\right)  $ in
(\ref{cube size}) as in \cite{SaShUr7}, but with Alpert wavelets in place of
Haar wavelets,%
\begin{align*}
\mathsf{B}_{\subset_{\mathbf{\rho},\varepsilon}}\left(  f,g\right)   &
=\sum_{F\in\mathcal{F}}\left\langle T_{\sigma}\left(  \mathsf{P}%
_{\mathcal{C}_{F}}^{\sigma}f\right)  ,\mathsf{P}_{\mathcal{C}_{F}%
^{\mathbf{\tau}-\operatorname*{shift}}}^{\omega}g\right\rangle _{\omega
}^{\subset_{\mathbf{\rho},\varepsilon}}+\sum_{\substack{F,G\in\mathcal{F}%
\\G\subsetneqq F}}\left\langle T_{\sigma}\left(  \mathsf{P}_{\mathcal{C}_{F}%
}^{\sigma}f\right)  ,\mathsf{P}_{\mathcal{C}_{G}^{\mathbf{\tau}%
-\operatorname*{shift}}}^{\omega}g\right\rangle _{\omega}^{\subset
_{\mathbf{\rho},\varepsilon}}\\
&  +\sum_{\substack{F,G\in\mathcal{F}\\G\supsetneqq F}}\left\langle T_{\sigma
}\left(  \mathsf{P}_{\mathcal{C}_{F}}^{\sigma}f\right)  ,\mathsf{P}%
_{\mathcal{C}_{G}^{\mathbf{\tau}-\operatorname*{shift}}}^{\omega
}g\right\rangle _{\omega}^{\subset_{\mathbf{\rho},\varepsilon}}+\sum
_{\substack{F,G\in\mathcal{F}\\F\cap G=\emptyset}}\left\langle T_{\sigma
}\left(  \mathsf{P}_{\mathcal{C}_{F}}^{\sigma}f\right)  ,\mathsf{P}%
_{\mathcal{C}_{G}^{\mathbf{\tau}-\operatorname*{shift}}}^{\omega
}g\right\rangle _{\omega}^{\subset_{\mathbf{\rho},\varepsilon}}\\
&  \equiv\mathsf{T}_{\operatorname*{diagonal}}^{\subset_{\mathbf{\rho
},\varepsilon}}\left(  f,g\right)  +\mathsf{T}_{\operatorname*{far}%
\operatorname*{below}}^{\subset_{\mathbf{\rho},\varepsilon}}\left(
f,g\right)  +\mathsf{T}_{\operatorname*{far}\operatorname*{above}}%
^{\subset_{\mathbf{\rho},\varepsilon}}\left(  f,g\right)  +\mathsf{T}%
_{\operatorname*{disjoint}}^{\subset_{\mathbf{\rho},\varepsilon}}\left(
f,g\right)  .
\end{align*}
The final two forms $\mathsf{T}_{\operatorname*{far}\operatorname*{above}%
}^{\subset_{\mathbf{\rho},\varepsilon}}\left(  f,g\right)  $ and
$\mathsf{T}_{\operatorname*{disjoint}}^{\subset_{\mathbf{\rho},\varepsilon}%
}\left(  f,g\right)  $ each vanish just as in \cite{SaShUr7}, since there are
no pairs $\left(  I,J\right)  \in\mathcal{C}_{F}\times\mathcal{C}%
_{G}^{\mathbf{\tau}-\operatorname*{shift}}$ with both (\textbf{i})
$J\subset_{\mathbf{\rho},\varepsilon}I$ and (\textbf{ii}) either $F\subsetneqq
G$ or $G\cap F=\emptyset$. The \emph{below far below} form $\mathsf{T}%
_{\operatorname*{far}\operatorname*{below}}^{\subset_{\mathbf{\rho
},\varepsilon}}\left(  f,g\right)  $ is then further split into two forms
$\mathsf{T}_{\operatorname*{far}\operatorname*{below}}^{1}\left(  f,g\right)
$ and $\mathsf{T}_{\operatorname*{far}\operatorname*{below}}^{2}\left(
f,g\right)  $ as in \cite{SaShUr7},%
\begin{align*}
&  \ \ \ \ \ \ \ \ \ \ \mathsf{T}_{\operatorname*{far}\operatorname*{below}%
}^{\subset_{\mathbf{\rho},\varepsilon}}\left(  f,g\right)  =\sum
_{G\in\mathcal{F}}\sum_{F\in\mathcal{F}:\ G\subsetneqq F}\sum_{\substack{I\in
\mathcal{C}_{F}\text{ and }J\in\mathcal{C}_{G}^{\mathbf{\tau}%
-\operatorname*{shift}}\\J\subset_{\mathbf{\rho},\varepsilon}I}}\left\langle
T_{\sigma}^{\alpha}\left(  \bigtriangleup_{I;\kappa_{1}}^{\sigma}f\right)
,\left(  \bigtriangleup_{J;\kappa_{2}}^{\omega}g\right)  \right\rangle
_{\omega}\\
&  =\sum_{G\in\mathcal{F}}\sum_{F\in\mathcal{F}:\ G\subsetneqq F}\sum
_{J\in\mathcal{C}_{G}^{\mathbf{\tau}-\operatorname*{shift}}}\sum
_{I\in\mathcal{C}_{F}\text{ and }J\subset I}\left\langle T_{\sigma}^{\alpha
}\left(  \bigtriangleup_{I;\kappa_{1}}^{\sigma}f\right)  ,\left(
\bigtriangleup_{J;\kappa_{2}}^{\omega}g\right)  \right\rangle _{\omega}\\
&  -\sum_{F\in\mathcal{F}}\sum_{G\in\mathcal{F}:\ G\subsetneqq F}\sum
_{J\in\mathcal{C}_{G}^{\mathbf{\tau}-\operatorname*{shift}}}\sum
_{I\in\mathcal{C}_{F}\text{ and }J\subset I\text{ but }J\not \subset
_{\mathbf{\rho},\varepsilon}I}\left\langle T_{\sigma}^{\alpha}\left(
\bigtriangleup_{I;\kappa_{1}}^{\sigma}f\right)  ,\left(  \bigtriangleup
_{J;\kappa_{2}}^{\omega}g\right)  \right\rangle _{\omega}\equiv\mathsf{T}%
_{\operatorname*{far}\operatorname*{below}}^{1}\left(  f,g\right)
-\mathsf{T}_{\operatorname*{far}\operatorname*{below}}^{2}\left(  f,g\right)
.
\end{align*}

\begin{remark}
For the remainder of the proof of Theorem \ref{main}, one should keep in mind
that if $T^{\alpha}$ is a Stein elliptic Calder\'{o}n-Zygmund operator on
$\mathbb{R}^{n}$, then $A_{2}^{\alpha}\left(  \sigma,\omega\right)
\lesssim\operatorname*{weak}\mathfrak{N}_{T^{\alpha}}\left(  \sigma
,\omega\right)  $.
\end{remark}

The second form $\mathsf{T}_{\operatorname*{far}\operatorname*{below}}%
^{2}\left(  f,g\right)  $ is easily seen to satisfy%
\begin{equation}
\left\vert \mathsf{T}_{\operatorname*{far}\operatorname*{below}}^{2}\left(
f,g\right)  \right\vert \lesssim C\left(  \mathfrak{T}_{T^{\alpha}}%
^{\kappa_{1}}+\mathfrak{T}_{T^{\alpha,\ast}}^{\kappa_{2}}+\mathcal{WBP}%
_{T^{\alpha}}^{\left(  \kappa_{1},\kappa_{2}\right)  }\left(  \sigma
,\omega\right)  +\sqrt{A_{2}^{\alpha}}\right)  \left\Vert f\right\Vert
_{L^{2}\left(  \sigma\right)  }\left\Vert g\right\Vert _{L^{2}\left(
\omega\right)  }, \label{second far below}%
\end{equation}
just as for the analogous inequality in \cite{SaShUr7} for Haar wavelets, by
(\ref{routine}). To control the first and main form $\mathsf{T}%
_{\operatorname*{far}\operatorname*{below}}^{1}\left(  f,g\right)  $, we use
the $\kappa$-pivotal Intertwining Proposition \ref{strongly adapted} recalled
from \cite{Saw6} in the earlier section on preliminaries. This proposition
then immediately gives the bound%
\begin{equation}
\left\vert \mathsf{T}_{\operatorname*{far}\operatorname*{below}}^{1}\left(
f,g\right)  \right\vert \lesssim\left(  \mathfrak{T}_{T^{\alpha}}^{\kappa_{1}%
}+\sqrt{A_{2}^{\alpha}}\right)  \left\Vert f\right\Vert _{L^{2}\left(
\sigma\right)  }\left\Vert g\right\Vert _{L^{2}\left(  \omega\right)  }.
\label{first far below}%
\end{equation}

To handle the \emph{below diagonal} form $\mathsf{T}_{\operatorname*{diagonal}%
}^{\subset_{\mathbf{\rho},\varepsilon}}\left(  f,g\right)  $, we decompose
according to the stopping times $\mathcal{F}$,%
\begin{equation}
\mathsf{T}_{\operatorname*{diagonal}}^{\subset_{\mathbf{\rho},\varepsilon}%
}\left(  f,g\right)  =\sum_{F\in\mathcal{F}}\mathsf{T}%
_{\operatorname*{diagonal}}^{\subset_{\mathbf{\rho},\varepsilon},F}\left(
f,g\right)  ,\text{ where }\mathsf{T}_{\operatorname*{diagonal}}%
^{\subset_{\mathbf{\rho},\varepsilon},F}\left(  f,g\right)  \equiv\left\langle
T_{\sigma}^{\alpha}\left(  \mathsf{P}_{\mathcal{C}_{F}}^{\sigma}f\right)
,\mathsf{P}_{\mathcal{C}_{F}^{\mathbf{\tau}-\operatorname*{shift}}}^{\omega
}g\right\rangle _{\omega}^{\subset_{\mathbf{\rho},\varepsilon}},
\label{def block}%
\end{equation}
and it is enough, using Cauchy-Schwarz and quasiorthogonality
(\ref{Car and quasi}) in $f$, together with orthogonality in both $f$ and $g$,
to prove the following bound involving the usual cube testing constant,%
\begin{equation}
\left\vert \mathsf{T}_{\operatorname*{diagonal}}^{\subset_{\mathbf{\rho
},\varepsilon},F}\left(  f,g\right)  \right\vert \lesssim\left(
\mathfrak{T}_{T^{\alpha}}^{\kappa_{1}}+\sqrt{A_{2}^{\alpha}}\right)  \ \left(
\alpha_{\mathcal{F}}\left(  F\right)  \sqrt{\left\vert F\right\vert _{\sigma}%
}+\left\Vert \mathsf{P}_{\mathcal{C}_{F}}^{\sigma}f\right\Vert _{L^{2}\left(
\sigma\right)  }\right)  \ \left\Vert \mathsf{P}_{\mathcal{C}_{F}%
^{\mathbf{\tau}-\operatorname*{shift}}}^{\omega}g\right\Vert _{L^{2}\left(
\omega\right)  }\ . \label{below form bound}%
\end{equation}
Indeed, this then gives the estimate,%
\begin{equation}
\left\vert \mathsf{T}_{\operatorname*{diagonal}}^{\subset_{\mathbf{\rho
},\varepsilon}}\left(  f,g\right)  \right\vert \lesssim\left(  \mathfrak{T}%
_{T^{\alpha}}^{\kappa_{1}}+\sqrt{A_{2}^{\alpha}}\right)  \left\Vert
f\right\Vert _{L^{2}\left(  \sigma\right)  }\left\Vert g\right\Vert
_{L^{2}\left(  \omega\right)  }. \label{diag est}%
\end{equation}
It is important for this below estimate that we choose $\kappa_{2}\geq
2\kappa_{1}$ and $\kappa_{1}$ sufficiently large.

On the other hand, when we turn to bounding the above diagonal form
$\mathsf{T}_{\operatorname*{diagonal}}^{\supset_{\mathbf{\rho},\varepsilon}%
}\left(  f,g\right)  $, we will \textbf{not} have $\kappa_{1}\geq2\kappa_{2}$,
and we will have to argue differently in order to use the dual indicator /
cube testing constant $\mathfrak{T}_{T^{\alpha}}^{\operatorname*{ind},\ast
}\left(  \omega,\sigma\right)  $,%
\begin{equation}
\left\vert \mathsf{T}_{\operatorname*{diagonal}}^{\supset_{\mathbf{\rho
},\varepsilon}}\left(  f,g\right)  \right\vert \lesssim\left(  \mathfrak{T}%
_{T^{\alpha}}^{\operatorname*{ind},\ast}\left(  \omega,\sigma\right)
+\sqrt{A_{2}^{\alpha}}\right)  \ \left(  \alpha_{\mathcal{F}}\left(  F\right)
\sqrt{\left\vert F\right\vert _{\sigma}}+\left\Vert \mathsf{P}_{\mathcal{C}%
_{F}}^{\sigma}f\right\Vert _{L^{2}\left(  \sigma\right)  }\right)
\ \left\Vert \mathsf{P}_{\mathcal{C}_{F}^{\mathbf{\tau}-\operatorname*{shift}%
}}^{\omega}g\right\Vert _{L^{2}\left(  \omega\right)  }. \label{diag est dual}%
\end{equation}

Thus at this point we have reduced the proof of Theorem \ref{main} to

\begin{enumerate}
\item proving (\ref{below form bound}),

\item proving (\ref{diag est dual}),

\item and controlling the triple polynomial testing condition
(\ref{full testing}) by the usual cube testing condition and the classical
Muckenhoupt condition (\ref{Muck and test}).
\end{enumerate}

In the next section we address the first issue by proving the inequality
(\ref{below form bound}) for the \emph{below diagonal} forms $\mathsf{T}%
_{\operatorname*{diagonal}}^{\subset_{\mathbf{\rho},\varepsilon},F}\left(
f,g\right)  $, and in the subsequent section we prove (\ref{diag est dual})
for the \emph{above diagonal} form $\mathsf{T}_{\operatorname*{diagonal}%
}^{\supset_{\mathbf{\rho},\varepsilon},F}\left(  f,g\right)  $. In the final
section, we address the second issue and complete the proof of Theorem
\ref{main} by drawing together all of the estimates.

\section{Below diagonal form and the NTV reach for Alpert wavelets}

In this section we give the main new argument of this paper. It will be
convenient to denote our fractional singular integral operators by
$T^{\lambda}$, $0\leq\lambda<n$, instead of $T^{\alpha}$, as $\alpha$ will
denote a multi-index in $\mathbb{Z}_{+}^{n}$. But first, we note that for a
doubling measure $\mu$, a cube $I$ and a polynomial $P$, we have $\left\Vert
P\mathbf{1}_{I}\right\Vert _{L^{\infty}\left(  \mu\right)  }=\sup_{x\in
I}\left\vert P\left(  x\right)  \right\vert $; in particular, $\left\Vert
P\mathbf{1}_{I}\right\Vert _{L^{\infty}\left(  \sigma\right)  }=\left\Vert
P\mathbf{1}_{I}\right\Vert _{L^{\infty}\left(  \omega\right)  }=\left\Vert
P\mathbf{1}_{I}\right\Vert _{L^{\infty}}$.

We will adapt the classical reach of NTV using Haar wavelet projections
$\bigtriangleup_{I}^{\sigma}$, namely the ingenious `thinking outside the box'
idea of the paraproduct / stopping / neighbour decomposition of Nazarov, Treil
and Volberg \cite{NTV4}. Since we are using weighted Alpert wavelet
projections $\bigtriangleup_{I;\kappa}^{\sigma}$ instead, the projection
$\mathbb{E}_{I^{\prime};\kappa}^{\sigma}\bigtriangleup_{I;\kappa}^{\sigma}f$
onto the child $I^{\prime}\in\mathfrak{C}_{\mathcal{D}}\left(  I\right)  $
equals $M_{I^{\prime};\kappa}\mathbf{1}_{I_{\pm}}$ where $M=M_{I^{\prime
};\kappa}$ is a polynomial of degree less than $\kappa$ restricted to
$I^{\prime}$, as opposed to a constant in the Haar case, and hence no longer
commutes in general with the operator $T_{\sigma}^{\lambda}$. As mentioned in
the introduction, this results in a new commutator form to be bounded, and
complicates bounding the remaining forms as well.

We will treat the \emph{below diagonal} forms $\mathsf{T}%
_{\operatorname*{diagonal}}^{\subset_{\mathbf{\rho},\varepsilon},F}\left(
f,g\right)  $ for $F\in\mathcal{F}$ in detail in this section, and turn to the
analogous \emph{above diagonal} forms $\mathsf{T}_{\operatorname*{diagonal}%
}^{\supset_{\mathbf{\rho},\varepsilon},G}\left(  f,g\right)  $ for
$G\in\mathcal{G}$ in the next section. We have from (\ref{def block}), that
$\mathsf{T}_{\operatorname*{diagonal}}^{\subset_{\mathbf{\rho},\varepsilon}%
,F}\left(  f,g\right)  $ equals%
\begin{align*}
&  \sum_{\substack{I\in\mathcal{C}_{F}\text{ and }J\in\mathcal{C}%
_{F}^{\mathbf{\tau}-\operatorname*{shift}}\\J\subset_{\mathbf{\rho
},\varepsilon}I}}\left\langle T_{\sigma}^{\lambda}\left(  \mathbf{1}_{I_{J}%
}\bigtriangleup_{I;\kappa_{1}}^{\sigma}f\right)  ,\bigtriangleup_{J;\kappa
_{2}}^{\omega}g\right\rangle _{\omega}+\sum_{\substack{I\in\mathcal{C}%
_{F}\text{ and }J\in\mathcal{C}_{F}^{\mathbf{\tau}-\operatorname*{shift}%
}\\J\subset_{\mathbf{\rho},\varepsilon}I}}\sum_{\theta\left(  I_{J}\right)
\in\mathfrak{C}_{\mathcal{D}}\left(  I\right)  \setminus\left\{
I_{J}\right\}  }\left\langle T_{\sigma}^{\lambda}\left(  \mathbf{1}%
_{\theta\left(  I_{J}\right)  }\bigtriangleup_{I;\kappa_{1}}^{\sigma}f\right)
,\bigtriangleup_{J;\kappa_{2}}^{\omega}g\right\rangle _{\omega}\\
&  \equiv\mathsf{T}_{\operatorname{home}}^{\subset_{\mathbf{\rho},\varepsilon
},F}\left(  f,g\right)  +\mathsf{T}_{\operatorname*{neighbour}}^{\subset
_{\mathbf{\rho},\varepsilon},F}\left(  f,g\right)  ,
\end{align*}
where we write $\mathbf{\kappa}=\left(  \kappa_{1},\kappa_{2}\right)  $, and
we further decompose the \emph{below home} form using
\begin{equation}
M_{I^{\prime}}=M_{I^{\prime};\kappa_{1}}\equiv\mathbf{1}_{I^{\prime}%
}\bigtriangleup_{I;\kappa_{1}}^{\sigma}f=\mathbb{E}_{I^{\prime};\kappa_{1}%
}^{\sigma}\bigtriangleup_{I;\kappa_{1}}^{\sigma}f=\mathbb{E}_{I^{\prime
};\kappa_{1}}^{\sigma}\bigtriangleup_{I;\kappa_{1}}^{\sigma}\mathsf{P}%
_{\mathcal{C}_{F}}f, \label{def M}%
\end{equation}
where $\mathsf{P}_{\mathcal{C}_{F}}f\equiv\sum_{I\in\mathcal{C}_{F}%
}\bigtriangleup_{I;\kappa_{1}}^{\sigma}f$, to obtain%
\begin{align*}
&  \mathsf{T}_{\operatorname{home}}^{\subset_{\mathbf{\rho},\varepsilon}%
,F}\left(  f,g\right)  =\sum_{\substack{I\in\mathcal{C}_{F}\text{ and }%
J\in\mathcal{C}_{F}^{\mathbf{\tau}-\operatorname*{shift}}\\J\subset
_{\mathbf{\rho},\varepsilon}I}}\left\langle M_{I_{J}}T_{\sigma}^{\lambda
}\mathbf{1}_{F},\bigtriangleup_{J;\kappa_{2}}^{\omega}g\right\rangle _{\omega
}-\sum_{\substack{I\in\mathcal{C}_{F}\text{ and }J\in\mathcal{C}%
_{F}^{\mathbf{\tau}-\operatorname*{shift}}\\J\subset_{\mathbf{\rho
},\varepsilon}I}}\left\langle M_{I_{J}}T_{\sigma}^{\lambda}\mathbf{1}%
_{F\setminus I_{J}},\bigtriangleup_{J;\kappa_{2}}^{\omega}g\right\rangle
_{\omega}\\
&  +\sum_{\substack{I\in\mathcal{C}_{F}\text{ and }J\in\mathcal{C}%
_{F}^{\mathbf{\tau}-\operatorname*{shift}}\\J\subset_{\mathbf{\rho
},\varepsilon}I}}\left\langle \left[  T_{\sigma}^{\lambda},M_{I_{J}}\right]
\mathbf{1}_{I_{J}},\bigtriangleup_{J;\kappa_{2}}^{\omega}g\right\rangle
_{\omega}\equiv\mathsf{T}_{\operatorname*{paraproduct}}^{\subset
_{\mathbf{\rho},\varepsilon},F}\left(  f,g\right)  +\mathsf{T}%
_{\operatorname*{stop}}^{\subset_{\mathbf{\rho},\varepsilon},F}\left(
f,g\right)  +\mathsf{T}_{\operatorname*{commutator}}^{\subset_{\mathbf{\rho
},\varepsilon},F}\left(  f,g\right)  .
\end{align*}
Altogether then we have the weighted Alpert version of the NTV paraproduct
decomposition\footnote{In \cite[see the end of Section 10 on Concluding
Remarks]{Saw6} it was remarked that one cannot extend a nonconstant
polynomial, normalized to a cube $Q$, to a supercube $F$ without destroying
the normalization in general. This obstacle to the paraproduct decomposition
of NTV is overcome here by controlling the commutator form.},%
\[
\mathsf{T}_{\operatorname*{diagonal}}^{\subset_{\mathbf{\rho},\varepsilon}%
,F}\left(  f,g\right)  =\mathsf{T}_{\operatorname*{paraproduct}}%
^{\subset_{\mathbf{\rho},\varepsilon},F}\left(  f,g\right)  +\mathsf{T}%
_{\operatorname*{stop}}^{\subset_{\mathbf{\rho},\varepsilon},F}\left(
f,g\right)  +\mathsf{T}_{\operatorname*{commutator}}^{\subset_{\mathbf{\rho
},\varepsilon},F}\left(  f,g\right)  +\mathsf{T}_{\operatorname*{neighbour}%
}^{\subset_{\mathbf{\rho},\varepsilon},F}\left(  f,g\right)  .
\]

In fact, we will see that all forms above, except for the paraproduct, are
absolutely convergent with respect to the double sum over the cubes $I$, $J$.

\subsection{The below paraproduct form}

First pigeonhole the sum over pairs $I$ and $J$ according to which child
$I^{\prime}\in\mathfrak{C}_{\mathcal{D}}\left(  I\right)  $ contains $J$ to
get
\[
\mathsf{T}_{\operatorname*{paraproduct}}^{\subset_{\mathbf{\rho},\varepsilon
},F}\left(  f,g\right)  =\sum_{I\in\mathcal{C}_{F}}\sum_{I^{\prime}%
\in\mathfrak{C}_{\mathcal{D}}\left(  I\right)  }\sum_{\substack{J\in
\mathcal{C}_{F}^{\mathbf{\tau}-\operatorname*{shift}}:\ J\subset
_{\mathbf{\rho},\varepsilon}I\\J\subset I^{\prime}}}\left\langle M_{I^{\prime
};\kappa_{1}}T_{\sigma}^{\lambda}\mathbf{1}_{F},\bigtriangleup_{J;\kappa_{2}%
}^{\omega}g\right\rangle _{\omega}.
\]
This form $\mathsf{T}_{\operatorname*{paraproduct}}^{\subset_{\mathbf{\rho
},\varepsilon},F}\left(  f,g\right)  $ can be handled as usual, using the
telescoping property (\ref{telescoping}) to sum the restrictions to a cube
$J\in\mathcal{C}_{F}^{\mathbf{\tau}-\operatorname*{shift}}$ of the polynomials
$M_{I^{\prime};\kappa_{1}}$ over the relevant cubes $I$, to obtain a
restricted polynomial $\mathbf{1}_{J}P_{I^{\prime};\kappa_{1}}$ that is
controlled by $\alpha_{\mathcal{F}}\left(  F\right)  $, and then passing the
polynomial $M_{J;\kappa_{1}}$ over to $\bigtriangleup_{J;\kappa_{2}}^{\omega
}g$. More precisely, for each $J\in\mathcal{C}_{F}^{\mathbf{\tau
}-\operatorname*{shift}}$, let $I_{J}^{\natural}$ denote the smallest
$K\in\mathcal{C}_{F}$ such that $J\subset_{\mathbf{\rho},\varepsilon}K$ (and
as a consequence $J\subset_{\mathbf{\rho},\varepsilon}I$ for all $I\supset
I_{J}^{\natural}$), and let $I_{J}^{\flat}$ denote the $\mathcal{D}$-child of
$I_{J}^{\natural}$ that contains $J$. Then we further consider the two
possibilities where $I_{J}^{\flat}$ is in $\mathcal{C}_{F}$ or not. We have
\begin{equation}
\label{eq:telescope_M_to_poly}\sum_{I\in\mathcal{C}_{F}:\ I_{J}^{\natural
}\subset I}\mathbf{1}_{J}M_{I^{\prime};\kappa_{1}}=\mathbf{1}_{J}\sum
_{I\in\mathcal{C}_{F}:\ I_{J}^{\natural}\subset I}M_{I^{\prime};\kappa_{1}%
}=\mathbf{1}_{J}\left(  \mathbb{E}_{I_{J}^{\flat};\kappa_{1}}^{\sigma
}f-\mathbb{E}_{F;\kappa_{1}}^{\sigma}f\right)  \equiv\mathbf{1}_{J}%
P_{J;\kappa_{1}}%
\end{equation}
and we set $P_{J;\kappa_{1}}\equiv0$ if $I_{J}^{\flat}\not \in \mathcal{C}%
_{F}$ and $I_{J}^{\flat}\in\mathcal{C}_{F}$. We now claim that
\begin{equation}
\left\Vert \mathbf{1}_{J}P_{J;\kappa_{1}}\right\Vert _{L^{\infty}\left(
\sigma\right)  }\leq\left\Vert \mathbb{E}_{I_{J}^{\flat};\kappa_{1}}^{\sigma
}f\right\Vert _{L^{\infty}\left(  \sigma\right)  }+\left\Vert \mathbb{E}%
_{F;\kappa_{1}}^{\sigma}f\right\Vert _{L^{\infty}\left(  \sigma\right)  }
\lesssim\alpha_{\mathcal{F}}\left(  F\right)  \, . \label{unif bdd}%
\end{equation}
Indeed, we note that
\[
\left\Vert \mathbb{E}_{F;\kappa_{1}}^{\sigma}f\right\Vert _{L^{\infty}\left(
\sigma\right)  } \lesssim E_{F} f \lesssim\alpha_{\mathcal{F}} (F) \, .
\]
And as for $\left\Vert \mathbb{E}_{I_{J}^{\flat};\kappa_{1}}^{\sigma
}f\right\Vert _{L^{\infty}\left(  \sigma\right)  }$, there are two cases: if
$I_{J}^{\flat} \in C_{F}$, then
\[
\left\Vert \mathbb{E}_{I_{J}^{\flat};\kappa_{1}}^{\sigma}f\right\Vert
_{L^{\infty}\left(  \sigma\right)  } \lesssim E_{F} f \lesssim\alpha
_{\mathcal{F}} (F)
\]
by (\ref{analogue}) and the definition of the stopping time, and if
$I_{J}^{\flat}\in\mathcal{F}$, then because $\sigma$ is doubling and
$\pi_{\mathcal{D}}F^{\prime}\in\mathcal{C}_{F}$, we get
\[
\left\Vert \mathbb{E}_{I_{J}^{\flat};\kappa_{1}}^{\sigma}f\right\Vert
_{L^{\infty}\left(  \sigma\right)  } \lesssim E_{F^{\prime}}^{\sigma
}\left\vert f\right\vert \lesssim\alpha_{\mathcal{F}}\left(  F\right)  \, .
\]
Thus
\begin{align*}
&  \left\vert \mathsf{T}_{\operatorname*{paraproduct}}^{\subset_{\mathbf{\rho
},\varepsilon},F }\left(  f,g\right)  \right\vert =\left\vert \sum
_{J\in\mathcal{C}_{F}^{\mathbf{\tau}-\operatorname*{shift}}}\left\langle
\mathbf{1}_{J}\left(  \sum_{\substack{I\in\mathcal{C}_{F}:\ \\J\subset
_{\mathbf{\rho},\varepsilon}I}}\sum_{\substack{I^{\prime}\in\mathfrak{C}%
_{\mathcal{D}}\left(  I\right)  \\J\subset I^{\prime}\text{ }}}M_{I^{\prime
};\kappa_{1}}\right)  T_{\sigma}^{\lambda}\mathbf{1}_{F},\bigtriangleup
_{J;\kappa}^{\omega}g\right\rangle _{\omega}\right\vert \\
&  =\left\vert \sum_{J\in\mathcal{C}_{F}^{\mathbf{\tau}-\operatorname*{shift}%
}}\left\langle \mathbf{1}_{J}P_{J;\kappa_{1}}T_{\sigma}^{\lambda}%
\mathbf{1}_{F},\bigtriangleup_{J;\kappa}^{\omega}g\right\rangle _{\omega
}\right\vert =\left\vert \left\langle T_{\sigma}^{\lambda}\mathbf{1}_{F}%
,\sum_{J\in\mathcal{C}_{F}^{\mathbf{\tau}-\operatorname*{shift}}}%
P_{J;\kappa_{1}} \bigtriangleup_{J;\kappa_{2}}^{\omega}g\right\rangle
_{\omega}\right\vert \\
&  \leq\left\Vert T_{\sigma}^{\lambda}\mathbf{1}_{F}\right\Vert _{L^{2}\left(
\omega\right)  }\alpha_{\mathcal{F}}\left(  F\right)  \left\Vert \sum
_{J\in\mathcal{C}_{F}^{\mathbf{\tau}-\operatorname*{shift}}}\frac
{P_{J;\kappa_{1}}}{\alpha_{\mathcal{F}}\left(  F\right)  }\bigtriangleup
_{J;\kappa_{2}}^{\omega}g\right\Vert _{L^{2}\left(  \omega\right)  }%
\leq\mathfrak{T}_{T^{\lambda}}\sqrt{\left\vert F\right\vert _{\sigma}}%
\alpha_{\mathcal{F}}\left(  F\right)  \left\Vert \sum_{J\in\mathcal{C}%
_{F}^{\mathbf{\tau}-\operatorname*{shift}}}\frac{P_{J;\kappa_{1}}}%
{\alpha_{\mathcal{F}}\left(  F\right)  }\bigtriangleup_{J;\kappa_{2}}^{\omega
}g\right\Vert _{L^{2}\left(  \omega\right)  }.
\end{align*}
Now we will use an almost orthogonality argument that exploits the fact that
for $J^{\prime}$ small compared to $J$, and $\kappa_{1}\leq\kappa_{2}$, the
function $M_{J^{\prime};\kappa_{1}}\bigtriangleup_{J^{\prime};\kappa_{2}%
}^{\omega}g$ has vanishing $\omega$-means up to order $\kappa_{2}-\kappa
_{1}+1$, and the polynomial $\mathbf{1}_{J}P_{J;\kappa_{1}}\bigtriangleup
_{J;\kappa_{2}}^{\omega}g$ is relatively smooth at the scale of $J^{\prime}$,
together with the fact that the polynomials $R_{J;\kappa_{1}}\equiv
\frac{P_{J;\kappa_{1}}}{\alpha_{\mathcal{F}}\left(  F\right)  }$ of degree at
most $\kappa_{1}-1$, have $L^{\infty}$ norm uniformly bounded by
(\ref{unif bdd}), to show that
\begin{align}
\left\Vert \sum_{J\in\mathcal{C}_{F}^{\mathbf{\tau}-\operatorname*{shift}}%
}R_{J;\kappa_{1}}\bigtriangleup_{J;\kappa_{2}}^{\omega}g\right\Vert
_{L^{2}\left(  \omega\right)  }^{2}  &  =\sum_{J\in\mathcal{C}_{F}%
^{\mathbf{\tau}-\operatorname*{shift}}}\left\Vert R_{J;\kappa_{1}%
}\bigtriangleup_{J;\kappa_{2}}^{\omega}g\right\Vert _{L^{2}\left(
\omega\right)  }^{2}+\sum_{\substack{J,J^{\prime}\in\mathcal{C}_{F}%
^{\mathbf{\tau}-\operatorname*{shift}}\\J\neq J^{\prime}}}\int\left(
R_{J;\kappa_{1}}\bigtriangleup_{J;\kappa_{2}}^{\omega}g\right)  \left(
R_{J^{\prime};\kappa_{1}}\bigtriangleup_{J^{\prime};\kappa_{2}}^{\omega
}g\right)  d\omega\label{to show that}\\
&  \lesssim\sum_{J\in\mathcal{C}_{F}^{\mathbf{\tau}-\operatorname*{shift}}%
}\left\Vert R_{J;\kappa_{1}}\bigtriangleup_{J;\kappa_{2}}^{\omega}g\right\Vert
_{L^{2}\left(  \omega\right)  }^{2}\lesssim\sum_{J\in\mathcal{C}%
_{F}^{\mathbf{\tau}-\operatorname*{shift}}}\left\Vert \bigtriangleup
_{J;\kappa_{2}}^{\omega}g\right\Vert _{L^{2}\left(  \omega\right)  }%
^{2}=\left\Vert \mathsf{P}_{\mathcal{C}_{F}^{\mathbf{\tau}%
-\operatorname*{shift}}}^{\omega}g\right\Vert _{L^{2}\left(  \omega\right)
}^{2}.\nonumber
\end{align}
Indeed, if $J^{\prime}$ is small compared to $J$, and $J^{\prime}\subset
J_{J^{\prime}}\subset J$, we have%
\begin{align*}
&  \left\vert \int\left(  R_{J;\kappa_{1}}\bigtriangleup_{J;\kappa_{2}%
}^{\omega}g\right)  \left(  R_{J^{\prime};\kappa_{1}}\bigtriangleup
_{J^{\prime};\kappa_{2}}^{\omega}g\right)  d\omega\right\vert \\
&  =\left\Vert R_{J;\kappa_{1}}\bigtriangleup_{J;\kappa_{2}}^{\omega
}g\right\Vert _{L^{\infty}\left(  \omega\right)  }\left\Vert R_{J^{\prime
};\kappa_{1}}\bigtriangleup_{J^{\prime};\kappa_{2}}^{\omega}g\right\Vert
_{L^{\infty}\left(  \omega\right)  }\left\vert \int\left(  \frac
{R_{J;\kappa_{1}}\bigtriangleup_{J;\kappa_{2}}^{\omega}g}{\left\Vert
R_{J;\kappa_{1}}\bigtriangleup_{J;\kappa_{2}}^{\omega}g\right\Vert
_{L^{\infty}\left(  \omega\right)  }}\right)  \left(  \frac{R_{J^{\prime
};\kappa_{1}}\bigtriangleup_{J^{\prime};\kappa_{2}}^{\omega}g}{\left\Vert
R_{J^{\prime};\kappa_{1}}\bigtriangleup_{J^{\prime};\kappa_{2}}^{\omega
}g\right\Vert _{L^{\infty}\left(  \omega\right)  }}\right)  d\omega\right\vert
\\
&  \lesssim\left\Vert R_{J;\kappa_{1}}\bigtriangleup_{J;\kappa_{2}}^{\omega
}g\right\Vert _{L^{\infty}\left(  \omega\right)  }\left\Vert R_{J^{\prime
};\kappa_{1}}\bigtriangleup_{J^{\prime};\kappa_{2}}^{\omega}g\right\Vert
_{L^{\infty}\left(  \omega\right)  }\frac{\ell\left(  J^{\prime}\right)
}{\ell\left(  J\right)  }\sqrt{\frac{\left\vert J^{\prime}\right\vert
_{\omega}}{\left\vert J\right\vert _{\omega}}}\sqrt{\left\vert J^{\prime
}\right\vert _{\omega}\left\vert J\right\vert _{\omega}}\\
&  \lesssim\sqrt{\frac{\left\vert J^{\prime}\right\vert _{\omega}}{\left\vert
J\right\vert _{\omega}}}\frac{\ell\left(  J^{\prime}\right)  }{\ell\left(
J\right)  }\left\Vert \bigtriangleup_{J^{\prime};\kappa_{2}}^{\omega
}g\right\Vert _{L^{2}\left(  \omega\right)  }\left\Vert \bigtriangleup
_{J;\kappa_{2}}^{\omega}g\right\Vert _{L^{2}\left(  \omega\right)  },
\end{align*}
by (\ref{analogue'}), i.e.%
\begin{align*}
\left\Vert R_{J^{\prime};\kappa_{1}}\bigtriangleup_{J^{\prime};\kappa_{2}%
}^{\omega}g\right\Vert _{L^{\infty}\left(  \omega\right)  }\sqrt{\left\vert
J^{\prime}\right\vert _{\omega}}  &  \lesssim\left\Vert \bigtriangleup
_{J^{\prime};\kappa_{2}}^{\omega}g\right\Vert _{L^{\infty}\left(
\omega\right)  }\sqrt{\left\vert J^{\prime}\right\vert _{\omega}}%
\lesssim\left\Vert \bigtriangleup_{J^{\prime};\kappa_{2}}^{\omega}g\right\Vert
_{L^{2}\left(  \omega\right)  },\\
\left\Vert R_{J;\kappa_{1}}\bigtriangleup_{J;\kappa_{2}}^{\omega}g\right\Vert
_{L^{\infty}\left(  \omega\right)  }\sqrt{\left\vert J\right\vert _{\omega}}
&  \lesssim\left\Vert \bigtriangleup_{J;\kappa_{2}}^{\omega}g\right\Vert
_{L^{\infty}\left(  \omega\right)  }\sqrt{\left\vert J\right\vert _{\omega}%
}\lesssim\left\Vert \bigtriangleup_{J;\kappa_{2}}^{\omega}g\right\Vert
_{L^{2}\left(  \omega\right)  }.
\end{align*}
Thus%
\begin{align*}
&  \sum_{\substack{J,J^{\prime}\in\mathcal{C}_{F}^{\mathbf{\tau}%
-\operatorname*{shift}}\\J^{\prime}\subsetneqq J}}\left\vert \int\left(
R_{J;\kappa_{1}}\bigtriangleup_{J;\kappa_{2}}^{\omega}g\right)  \left(
R_{J^{\prime};\kappa_{1}}\bigtriangleup_{J^{\prime};\kappa_{2}}^{\omega
}g\right)  d\omega\right\vert \lesssim\sum_{\substack{J,J^{\prime}%
\in\mathcal{C}_{F}^{\mathbf{\tau}-\operatorname*{shift}}\\J^{\prime
}\subsetneqq J}}\sqrt{\frac{\left\vert J^{\prime}\right\vert _{\omega}%
}{\left\vert J\right\vert _{\omega}}}\frac{\ell\left(  J^{\prime}\right)
}{\ell\left(  J\right)  }\left\Vert \bigtriangleup_{J^{\prime};\kappa_{2}%
}^{\omega}g\right\Vert _{L^{2}\left(  \omega\right)  }\left\Vert
\bigtriangleup_{J;\kappa_{2}}^{\omega}g\right\Vert _{L^{2}\left(
\omega\right)  }\\
&  =\sum_{m=1}^{\infty}2^{-m}\sum_{\substack{J,J^{\prime}\in\mathcal{C}%
_{F}^{\mathbf{\tau}-\operatorname*{shift}}\\\ell\left(  J^{\prime}\right)
=2^{-m}\ell\left(  J\right)  }}\sqrt{\frac{\left\vert J^{\prime}\right\vert
_{\omega}}{\left\vert J\right\vert _{\omega}}}\left\Vert \bigtriangleup
_{J^{\prime};\kappa_{2}}^{\omega}g\right\Vert _{L^{2}\left(  \omega\right)
}\left\Vert \bigtriangleup_{J;\kappa_{2}}^{\omega}g\right\Vert _{L^{2}\left(
\omega\right)  }\\
&  \leq\sum_{m=1}^{\infty}2^{-m}\sqrt{\sum_{\substack{J^{\prime}\in
\mathcal{C}_{F}^{\mathbf{\tau}-\operatorname*{shift}}\\\pi^{\left(  m\right)
}J^{\prime}\in\mathcal{C}_{F}^{\mathbf{\tau}-\operatorname*{shift}}%
}}\left\Vert \bigtriangleup_{J^{\prime};\kappa_{2}}^{\omega}g\right\Vert
_{L^{2}\left(  \omega\right)  }^{2}}\sqrt{\sum_{\substack{J^{\prime}%
\in\mathcal{C}_{F}^{\mathbf{\tau}-\operatorname*{shift}}\\J=\pi^{\left(
m\right)  }J^{\prime}\in\mathcal{C}_{F}^{\mathbf{\tau}-\operatorname*{shift}}%
}}\frac{\left\vert J^{\prime}\right\vert _{\omega}}{\left\vert J\right\vert
_{\omega}}\left\Vert \bigtriangleup_{J;\kappa_{2}}^{\omega}g\right\Vert
_{L^{2}\left(  \omega\right)  }^{2}}\lesssim\sum_{J\in\mathcal{C}%
_{F}^{\mathbf{\tau}-\operatorname*{shift}}}\left\Vert \bigtriangleup
_{J;\kappa_{2}}^{\omega}g\right\Vert _{L^{2}\left(  \omega\right)  }^{2}\ .
\end{align*}
Altogether we have shown%
\begin{equation}
\left\vert \mathsf{T}_{\operatorname*{paraproduct}}^{\subset_{\mathbf{\rho
},\varepsilon},F}\right\vert \lesssim\mathfrak{T}_{T^{\lambda}}\alpha
_{\mathcal{F}}\left(  F\right)  \sqrt{\left\vert F\right\vert _{\sigma}%
}\ \left\Vert \mathsf{P}_{\mathcal{C}_{F}^{\mathbf{\tau}-\operatorname*{shift}%
}}^{\omega}g\right\Vert _{L^{2}\left(  \omega\right)  } \label{para est}%
\end{equation}
as required by (\ref{below form bound}).

\subsection{The below commutator form}

We show that \emph{below commutator} form converges absolutely, in the sense
that
\begin{align}
&  \left\vert \mathsf{T}_{\operatorname*{commutator}}^{\subset_{\rho
,\varepsilon},F}\left(  f,g\right)  \right\vert \leq\sum_{\substack{I\in
\mathcal{C}_{F}\text{ and }J\in\mathcal{C}_{F}^{\mathbf{\tau}%
-\operatorname*{shift}}\\J\subset_{\mathbf{\rho},\varepsilon}I}} \left\vert
\left\langle \left[  T_{\sigma}^{\lambda},M_{I_{J};\kappa}\right]
\mathbf{1}_{I_{_{J}}},\bigtriangleup_{J;\kappa_{2}}^{\omega}g\right\rangle
_{\omega}\right\vert \lesssim\sqrt{A_{2}^{\lambda}}\left\Vert \mathsf{P}%
_{\mathcal{C}_{F}}^{\sigma}f\right\Vert _{L^{2}\left(  \sigma\right)
}\left\Vert \mathsf{P}_{\mathcal{C}_{F}^{\mathbf{\tau}-\operatorname*{shift}}%
}^{\omega}g\right\Vert _{L^{2}\left(  \omega\right)  } \, . \label{comm est}%
\end{align}

If $T=H$ is the Hilbert transform on the real line, and if $P_{\ell}\left(
x\right)  =x^{\ell}$ with $1\leq\ell\leq\kappa$ , then by the moment vanishing
properties of Alpert projections, we get that $H$ commutes with polynomials
$P$ of degree at most $\kappa$ when acting on a function $f$ with vanishing
$\sigma$-means up to order $\kappa-1$, i.e.
\[
\left\langle \left(  H_{\sigma}P-PH_{\sigma}\right)  f,g\right\rangle
_{\omega}=\left\langle 0,g\right\rangle _{\omega}=0.
\]
By duality we also have%
\[
\left\langle \left(  H_{\sigma}P-PH_{\sigma}\right)  f,g\right\rangle
_{\omega}=\left\langle f,\left(  H_{\omega}P-PH_{\omega}\right)
g\right\rangle _{\sigma}=\left\langle f,0\right\rangle _{\sigma}=0,
\]
if $g$ has vanishing $\omega$-means up to order $\kappa-1$. This motivates the
following argument.

\begin{notation}
We will take $\kappa_{2}\geq\kappa_{1}$ throughout this subsection, and for
convenience we write $\kappa=\kappa_{1}$. Then $\bigtriangleup_{J;\kappa_{2}%
}^{\omega}g\left(  x\right)  $ has vanishing $\omega$-means up to order
$\kappa_{2}-\kappa_{1}+1\geq1$. At many points in the arguments below we will
simply use that $\bigtriangleup_{J;\kappa_{2}}^{\omega}g\left(  x\right)  $
has $1$ vanishing $\omega$-moment, and continue with this single vanishing
$\omega$-moment in subsequent estimates, without further reference to the fact
that the estimates could be improved for $\kappa_{2}$ strictly larger than
$\kappa=\kappa_{1}$ (as these improved estimates are not needed for the
commutator form).
\end{notation}

Fix $\kappa\geq1$. Assume that $K^{\lambda}$ is a general standard $\lambda
$-fractional kernel in $\mathbb{R}^{n}$, and $T^{\lambda}$ is the associated
Calder\'{o}n-Zygmund operator, and define
\[
P_{\alpha,a,I^{\prime}}\left(  x\right)  =\left(  \frac{x-a}{\ell\left(
I^{\prime}\right)  }\right)  ^{\alpha}=\left(  \frac{x_{1}-a_{1}}{\ell\left(
I^{\prime}\right)  }\right)  ^{\alpha_{1}}...\left(  \frac{x_{n}-a_{n}}%
{\ell\left(  I^{\prime}\right)  }\right)  ^{\alpha_{n}},
\]
where $1\leq\left\vert \alpha\right\vert \leq\kappa-1$ (since when
$|\alpha|=0$, $P_{\alpha,a,I^{\prime}}$ commutes with $T^{\lambda}$) and
$I^{\prime}\in\mathfrak{C}_{\mathcal{D}}\left(  I\right)  $, $I\in
\mathcal{C}_{F}$.

We consider the renormalization $Q_{I^{\prime};\kappa}^{\mu}$ of the
polynomial $M_{I^{\prime};\kappa}$ introduced earlier, given by
\[
Q_{I^{\prime};\kappa}^{\mu}\equiv\frac{1}{\left\vert \widehat{f}\left(
I\right)  \right\vert }1_{I^{\prime}}\bigtriangleup_{I;\kappa}^{\sigma}%
f=\frac{1}{\left\vert \widehat{f}\left(  I\right)  \right\vert }M_{I^{\prime
};\kappa} \, .
\]
For $c_{J}\in J\subset I^{\prime}$, write%
\[
Q_{I^{\prime};\kappa}^{\sigma}\left(  y\right)  =\sum_{\left\vert
\alpha\right\vert <\kappa}b_{\alpha}\left(  \frac{y-c_{J}}{\ell\left(
I^{\prime}\right)  }\right)  ^{\alpha}=\sum_{\left\vert \alpha\right\vert
<\kappa}b_{\alpha}P_{\alpha,c_{J},I^{\prime}}\left(  y\right)  \, .
\]
By rescaling to the unit cube and invoking the fact that any two norms on a
finite dimensional vector space are equivalent, followed by then noting that
from (\ref{analogue'}) we get $\left\Vert Q_{I^{\prime};\kappa}^{\sigma
}\right\Vert _{\infty}\approx\frac{1}{\sqrt{\left\vert I\right\vert _{\sigma}%
}}$, then we have
\begin{equation}
\sum_{\left\vert \alpha\right\vert <\kappa}\left\vert b_{\alpha}\right\vert
\approx\left\Vert Q_{I^{\prime};\kappa}^{\sigma}\right\Vert _{\infty}%
\approx\frac{1}{\sqrt{\left\vert I\right\vert _{\sigma}}}. \label{we obtain}%
\end{equation}

We then bound%
\begin{align*}
\left\vert \left\langle \left[  M_{I^{\prime};\kappa},T_{\sigma}^{\lambda
}\right]  \mathbf{1}_{I^{\prime}},\bigtriangleup_{J;\kappa_{2}}^{\omega
}g\right\rangle _{\omega}\right\vert \leq\left|  \hat{f} \left(  I \right)
\right|  \left\vert \left\langle \left[  Q_{I^{\prime};\kappa},T_{\sigma
}^{\lambda}\right]  \mathbf{1}_{I^{\prime}},\bigtriangleup_{J;\kappa_{2}%
}^{\omega}g\right\rangle _{\omega}\right\vert \leq\sum_{\left\vert
\alpha\right\vert <\kappa}\left\vert b_{\alpha}\left\langle \left[
P_{\alpha,c_{J},I^{\prime}},T_{\sigma}^{\lambda}\right]  \mathbf{1}%
_{I^{\prime}},\bigtriangleup_{J;\kappa_{2}}^{\omega}g\right\rangle _{\omega
}\right\vert
\end{align*}
\[
\lesssim\frac{ \left|  \hat{f} \left(  I \right)  \right|  }{\sqrt{\left\vert
I\right\vert _{\sigma}}}\max_{\left\vert \alpha\right\vert <\kappa}\left\vert
\left\langle \left[  P_{\alpha,c_{J},I^{\prime}},T_{\sigma}^{\lambda}\right]
\mathbf{1}_{I^{\prime}},\bigtriangleup_{J;\kappa_{2}}^{\omega}g\right\rangle
_{\omega}\right\vert \, ,
\]
so we turn to estimating
\[
\left\vert \left\langle \left[  P_{\alpha,c_{J},I^{\prime}},T_{\sigma
}^{\lambda}\right]  \mathbf{1}_{I^{\prime}},\bigtriangleup_{J;\kappa_{2}%
}^{\omega}g\right\rangle _{\omega}\right\vert
\]
uniformly in $\alpha$.

Taking $J\subset I^{\prime}$, we begin by writing
\begin{align}
&  \ \ \ \ \ \ \ \ \ \ \ \ \ \ \ \left\langle \left[  P_{\alpha,a,I^{\prime}%
}\ ,T_{\sigma}^{\lambda}\right]  \mathbf{1}_{I^{\prime}},\bigtriangleup
_{J;\kappa_{2}}^{\omega}g\right\rangle _{\omega}=\int\left[  P_{\alpha
,a,I^{\prime}}\ ,T_{\sigma}^{\lambda}\right]  \mathbf{1}_{I^{\prime}}\left(
x\right)  \bigtriangleup_{J;\kappa_{2}}^{\omega}g\left(  x\right)
d\omega\left(  x\right) \label{two pieces}\\
&  =\int\left[  P_{\alpha,a,I^{\prime}}\ ,T_{\sigma}^{\lambda}\right]
\mathbf{1}_{I^{\prime}\setminus2J}\left(  x\right)  \bigtriangleup
_{J;\kappa_{2}}^{\omega}g\left(  x\right)  d\omega\left(  x\right)
+\int\left[  P_{\alpha,a,I^{\prime}}\ ,T_{\sigma}^{\lambda}\right]
\mathbf{1}_{2J}\left(  x\right)  \bigtriangleup_{J;\kappa_{2}}^{\omega
}g\left(  x\right)  d\omega\left(  x\right) \nonumber\\
&  \equiv\operatorname{Int}^{\lambda,\natural}\left(  J\right)
+\operatorname{Int}^{\lambda,\flat}\left(  J\right)  ,\nonumber
\end{align}
where we are suppressing the dependence on both $\alpha$ and $\kappa$.

Let us address the first term. We use the known identity
\[
x^{\alpha}-y^{\alpha}=\sum_{k=1}^{n}\left(  x_{k}-y_{k}\right)
%TCIMACRO{\dsum \limits_{\left\vert \beta\right\vert +\left\vert \gamma
%\right\vert =\left\vert \alpha\right\vert -1}}%
%BeginExpansion
{\displaystyle\sum\limits_{\left\vert \beta\right\vert +\left\vert
\gamma\right\vert =\left\vert \alpha\right\vert -1}}
%EndExpansion
c_{\alpha,\beta,\gamma}x^{\beta}y^{\gamma},
\]
to write the pointwise equality
\begin{align*}
&  \mathbf{1}_{I^{\prime}}\left(  x\right)  \left[  P_{\alpha,a,I^{\prime}%
}\ ,T_{\sigma}^{\lambda}\right]  \mathbf{1}_{I^{\prime}}\left(  x\right)
=\mathbf{1}_{I^{\prime}}\left(  x\right)  \int K^{\lambda}\left(  x-y\right)
\left\{  P_{\alpha,a,I^{\prime}}\left(  x\right)  -P_{\alpha,a,I^{\prime}%
}\left(  y\right)  \right\}  \mathbf{1}_{I^{\prime}}\left(  y\right)
d\sigma\left(  y\right) \\
&  =\mathbf{1}_{I^{\prime}}\left(  x\right)  \int K^{\lambda}\left(
x-y\right)  \left\{  \sum_{k=1}^{n}\left(  \frac{x_{k}-y_{k}}{\ell\left(
I^{\prime}\right)  }\right)
%TCIMACRO{\dsum \limits_{\left\vert \beta\right\vert +\left\vert \gamma
%\right\vert =\left\vert \alpha\right\vert -1}}%
%BeginExpansion
{\displaystyle\sum\limits_{\left\vert \beta\right\vert +\left\vert
\gamma\right\vert =\left\vert \alpha\right\vert -1}}
%EndExpansion
c_{\alpha,\beta,\gamma}\left(  \frac{x-a}{\ell\left(  I^{\prime}\right)
}\right)  ^{\beta}\left(  \frac{y-a}{\ell\left(  I^{\prime}\right)  }\right)
^{\gamma}\right\}  \mathbf{1}_{I^{\prime}}\left(  y\right)  d\sigma\left(
y\right) \\
&  =\sum_{k=1}^{n}%
%TCIMACRO{\dsum \limits_{\left\vert \beta\right\vert +\left\vert \gamma
%\right\vert =\left\vert \alpha\right\vert -1}}%
%BeginExpansion
{\displaystyle\sum\limits_{\left\vert \beta\right\vert +\left\vert
\gamma\right\vert =\left\vert \alpha\right\vert -1}}
%EndExpansion
c_{\alpha,\beta,\gamma}\mathbf{1}_{I^{\prime}}\left(  x\right)  \left[
\int\Phi_{k}^{\lambda}\left(  x-y\right)  \left\{  \left(  \frac{y-a}%
{\ell\left(  I^{\prime}\right)  }\right)  ^{\gamma}\right\}  \mathbf{1}%
_{I^{\prime}}\left(  y\right)  d\sigma\left(  y\right)  \right]  \left(
\frac{x-a}{\ell\left(  I^{\prime}\right)  }\right)  ^{\beta},
\end{align*}
where $\Phi_{k}^{\lambda}\left(  x-y\right)  =K^{\lambda}\left(  x-y\right)
\left(  \frac{x_{k}-y_{k}}{\ell\left(  I^{\prime}\right)  }\right)  $.

Integrating the above against $\bigtriangleup_{J; \kappa_{2}} ^{\omega} g$ and
then pulling out the double sum $\sum_{k=1}^{n} \sum\limits_{\left\vert
\beta\right\vert +\left\vert \gamma\right\vert =\left\vert \alpha\right\vert
-1}$ lets us write
\[
\operatorname{Int}^{\lambda,\natural}\left(  J\right)  \equiv\sum_{k=1}^{n}
\sum\limits_{\left\vert \beta\right\vert +\left\vert \gamma\right\vert
=\left\vert \alpha\right\vert -1} c_{\alpha,\beta,\gamma}\operatorname{Int}%
_{k,\beta,\gamma}^{\lambda,\natural}\left(  J\right)  ,
\]
where with the choice $a=c_{J}$ the center of $J$, we define%
\begin{align}
\operatorname{Int}_{k,\beta,\gamma}^{\lambda,\natural}\left(  J\right)   &
\equiv\int_{J}\left[  \int_{I^{\prime}\setminus2J}\Phi_{k}^{\lambda}\left(
x-y\right)  \left(  \frac{y-c_{J}}{\ell\left(  I^{\prime}\right)  }\right)
^{\gamma}d\sigma\left(  y\right)  \right]  \left(  \frac{x-c_{J}}{\ell\left(
I^{\prime}\right)  }\right)  ^{\beta}\bigtriangleup_{J;\kappa_{2}}^{\omega
}g\left(  x\right)  d\omega\left(  x\right) \label{coefficients}\\
&  =\int_{I^{\prime}\setminus2J}\left\{  \int_{J}\Phi_{k}^{\lambda}\left(
x-y\right)  \left(  \frac{x-c_{J}}{\ell\left(  I^{\prime}\right)  }\right)
^{\beta}\bigtriangleup_{J;\kappa_{2}}^{\omega}g\left(  x\right)
d\omega\left(  x\right)  \right\}  \left(  \frac{y-c_{J}}{\ell\left(
I^{\prime}\right)  }\right)  ^{\gamma}d\sigma\left(  y\right)  .\nonumber
\end{align}

Taking
\[
h\left(  x\right)  \equiv\left(  \frac{x-c_{J}}{\ell\left(  I^{\prime}\right)
}\right)  ^{\beta}\bigtriangleup_{J;\kappa_{2}}^{\omega}g\left(  x\right)  \,
,
\]
which has support in $J$ and at $\kappa-|\beta| + 1$ vanishing moments, by
Taylor's formula and the Calderon-Zygmund estimates for $K^{\lambda}$, we have
the inner-most integral has absolute value
\[
\left\vert \int\Phi_{k}^{\lambda}\left(  x-y\right)  h\left(  x\right)
d\omega\left(  x\right)  \right\vert =\left\vert \int\frac{1}{\left(
\kappa-\left\vert \beta\right\vert \right)  !}\left(  \left(  x-c_{J}\right)
\cdot\nabla\right)  ^{\kappa-\left\vert \beta\right\vert }\Phi_{k}^{\lambda
}\left(  \eta_{J}^{\omega}\right)  h\left(  x\right)  d\omega\left(  x\right)
\right\vert
\]
\[
\lesssim\left\Vert h\right\Vert _{L^{1}\left(  \omega\right)  }\frac
{\ell\left(  J\right)  ^{\kappa-\left\vert \beta\right\vert }}{\left[
\ell\left(  J\right)  +\operatorname*{dist}\left(  y,J\right)  \right]
^{\kappa-\left\vert \beta\right\vert +n-\lambda-1}\ell\left(  I^{\prime
}\right)  }\lesssim\left(  \frac{\ell\left(  J\right)  }{\ell\left(
I^{\prime}\right)  }\right)  ^{\left\vert \beta\right\vert }\frac{\ell\left(
J\right)  ^{\kappa-\left\vert \beta\right\vert }}{\left[  \ell\left(
J\right)  +\operatorname*{dist}\left(  y,J\right)  \right]  ^{\kappa
-\left\vert \beta\right\vert +n-\lambda-1}\ell\left(  I^{\prime}\right)
}\sqrt{\left\vert J\right\vert _{\omega}}\left\vert \widehat{g}\left(
J\right)  \right\vert ,
\]
where in the last inequality we used the estimate
\[
\left\Vert h\right\Vert _{L^{1}\left(  \omega\right)  }=\int_{J}\left\vert
\left(  \frac{x-c_{J}}{\ell\left(  I^{\prime}\right)  }\right)  ^{\beta
}\bigtriangleup_{J;\kappa_{2}}^{\omega}g\left(  x\right)  \right\vert
d\omega\left(  x\right)  \leq\left(  \frac{\ell\left(  J\right)  }{\ell\left(
I^{\prime}\right)  }\right)  ^{\left\vert \beta\right\vert }\left\Vert
\bigtriangleup_{J;\kappa_{2}}^{\omega}g\right\Vert _{L^{1}\left(
\omega\right)  }\lesssim\left(  \frac{\ell\left(  J\right)  }{\ell\left(
I^{\prime}\right)  }\right)  ^{\left\vert \beta\right\vert }\sqrt{\left\vert
J\right\vert _{\omega}}\left\vert \widehat{g}\left(  J\right)  \right\vert .
\]
Thus (\ref{coefficients}) yields
\begin{align*}
&  \left\vert \operatorname{Int}_{k,\beta,\gamma}^{\lambda,\natural}\left(
J\right)  \right\vert \leq\int_{I^{\prime}\setminus2J}\left\vert \int\Phi
_{k}^{\lambda}\left(  x-y\right)  \left(  \frac{x-c_{J}}{\ell\left(
I^{\prime}\right)  }\right)  ^{\beta}\bigtriangleup_{J;\kappa_{2}}^{\omega
}g\left(  x\right)  d\omega\left(  x\right)  \right\vert \left\vert \left(
\frac{y-c_{J}}{\ell\left(  I^{\prime}\right)  }\right)  ^{\gamma}\right\vert
d\sigma\left(  y\right) \\
&  \lesssim\int_{I^{\prime}\setminus2J}\left(  \frac{\ell\left(  J\right)
}{\ell\left(  I^{\prime}\right)  }\right)  ^{\left\vert \beta\right\vert
}\frac{\ell\left(  J\right)  ^{\kappa-\left\vert \beta\right\vert }}{\left[
\ell\left(  J\right)  +\operatorname*{dist}\left(  y,J\right)  \right]
^{\kappa-\left\vert \beta\right\vert +n-\lambda-1}\ell\left(  I\right)  }%
\sqrt{\left\vert J\right\vert _{\omega}}\left\vert \widehat{g}\left(
J\right)  \right\vert \left(  \frac{\ell\left(  J\right)
+\operatorname*{dist}\left(  y,J\right)  }{\ell\left(  I^{\prime}\right)
}\right)  ^{\left\vert \gamma\right\vert }d\sigma\left(  y\right) \\
&  =\left(  \frac{\ell\left(  J\right)  }{\ell\left(  I^{\prime}\right)
}\right)  ^{\left\vert \alpha\right\vert -1}\sqrt{\left\vert J\right\vert
_{\omega}}\left\vert \widehat{g}\left(  J\right)  \right\vert \left\{
\int_{I^{\prime}\setminus2J}\left(  \frac{\ell\left(  J\right)  }{\ell\left(
J\right)  +\operatorname*{dist}\left(  y,J\right)  }\right)  ^{\kappa
-\left\vert \alpha\right\vert +1}\frac{1}{\left[  \ell\left(  J\right)
+\operatorname*{dist}\left(  y,J\right)  \right]  ^{n-\lambda-1}\ell\left(
I^{\prime}\right)  }d\sigma\left(  y\right)  \right\}  .
\end{align*}

Now we fix $t\in\mathbb{N}$, and estimate the sum of $\left\vert
\operatorname{Int}^{\lambda,\natural}\left(  J\right)  \right\vert $ over
those $J\subset I^{\prime}$ with $\ell\left(  J\right)  =2^{-t}\ell\left(
I^{\prime}\right)  $ by splitting the integration in $y$ according to the size
of $\ell\left(  J\right)  +\operatorname*{dist}\left(  y,J\right)  $, to
obtain the following bound:%
\begin{align*}
&  \sum_{\substack{J\subset I^{\prime}\\\ell\left(  J\right)  =2^{-t}%
\ell\left(  I\right)  }}\left\vert \operatorname{Int}^{\lambda,\natural
}\left(  J\right)  \right\vert \\
&  \lesssim2^{-t\left(  \left\vert \alpha\right\vert -1\right)  }%
\sum_{\substack{J\subset I^{\prime}\\\ell\left(  J\right)  =2^{-t}\ell\left(
I\right)  }}\sqrt{\left\vert J\right\vert _{\omega}}\left\vert \widehat
{g}\left(  J\right)  \right\vert \left\{  \int_{I^{\prime}\setminus2J}\left(
\frac{\ell\left(  J\right)  }{\ell\left(  J\right)  +\operatorname*{dist}%
\left(  y,J\right)  }\right)  ^{\kappa-\left\vert \alpha\right\vert +1}%
\frac{d\sigma\left(  y\right)  }{\left[  \ell\left(  J\right)
+\operatorname*{dist}\left(  y,J\right)  \right]  ^{n-\lambda-1}\ell\left(
I^{\prime}\right)  }\right\} \\
&  \lesssim2^{-t\left(  \left\vert \alpha\right\vert -1\right)  }%
\sum_{J\subset I^{\prime}:\ \ell\left(  J\right)  =2^{-t}\ell\left(  I\right)
}\sqrt{\left\vert J\right\vert _{\omega}}\left\vert \widehat{g}\left(
J\right)  \right\vert \left\{  \sum_{s=1}^{t}\int_{2^{s+1}J\setminus2^{s}%
J}\left(  2^{-s}\right)  ^{\kappa-\left\vert \alpha\right\vert +1}%
\frac{d\sigma\left(  y\right)  }{\left(  2^{s}\ell\left(  J\right)  \right)
^{n-\lambda-1}\ell\left(  I^{\prime}\right)  }\right\} \\
&  \lesssim2^{-t\left\vert \alpha\right\vert }\sum_{J\subset I^{\prime}%
:\ \ell\left(  J\right)  =2^{-t}\ell\left(  I\right)  }\sqrt{\left\vert
J\right\vert _{\omega}}\left\vert \widehat{g}\left(  J\right)  \right\vert
\sum_{s=1}^{t}\left(  2^{-s}\right)  ^{\kappa-\left\vert \alpha\right\vert
+1}2^{-s\left(  n-\lambda-1\right)  }\frac{\left\vert 2^{s}J\right\vert
_{\sigma}}{\ell\left(  J\right)  ^{n-\lambda}},
\end{align*}
which, upon pigeonholing the sum in $J$ according to membership in the
grandchildren of $I$ at depth $t-s$, gives:
\begin{align*}
&  \sum_{J\in\mathfrak{C}_{\mathcal{D}}^{\left(  t\right)  }\left(  I^{\prime
}\right)  }\left\vert \operatorname{Int}^{\lambda,\natural}\left(  J\right)
\right\vert \lesssim2^{-t\left\vert \alpha\right\vert }\sum_{J\in
\mathfrak{C}_{\mathcal{D}}^{\left(  t\right)  }\left(  I^{\prime}\right)
}\sqrt{\left\vert J\right\vert _{\omega}}\left\vert \widehat{g}\left(
J\right)  \right\vert \sum_{s=1}^{t}\left(  2^{-s}\right)  ^{\kappa-\left\vert
\alpha\right\vert +n-\lambda}\frac{\left\vert 2^{s}J\right\vert _{\sigma}%
}{\ell\left(  J\right)  ^{n-\lambda}}\\
&  =2^{-t\left\vert \alpha\right\vert }\sum_{s=1}^{t}\left(  2^{-s}\right)
^{\kappa-\left\vert \alpha\right\vert }\sum_{K\in\mathfrak{C}_{\mathcal{D}%
}^{\left(  t-s\right)  }\left(  I^{\prime}\right)  }\sum_{J\in\mathfrak{C}%
_{\mathcal{D}}^{\left(  s\right)  }\left(  K\right)  }\sqrt{\left\vert
J\right\vert _{\omega}}\left\vert \widehat{g}\left(  J\right)  \right\vert
\frac{\left\vert 2^{s}J\right\vert _{\sigma}}{\ell\left(  K\right)
^{n-\lambda}}\\
&  \lesssim2^{-t\left\vert \alpha\right\vert }\sum_{s=1}^{t}\left(
2^{-s}\right)  ^{\kappa-\left\vert \alpha\right\vert }\sum_{K\in
\mathfrak{C}_{\mathcal{D}}^{\left(  t-s\right)  }\left(  I^{\prime}\right)
}\frac{\left\vert 3K\right\vert _{\sigma}}{\ell\left(  K\right)  ^{n-\lambda}%
}\sum_{J\in\mathfrak{C}_{\mathcal{D}}^{\left(  s\right)  }\left(  K\right)
}\sqrt{\left\vert J\right\vert _{\omega}}\left\vert \widehat{g}\left(
J\right)  \right\vert \\
&  \lesssim2^{-t\left\vert \alpha\right\vert }\sum_{s=1}^{t}\left(
2^{-s}\right)  ^{\kappa-\left\vert \alpha\right\vert }\sum_{K\in
\mathfrak{C}_{\mathcal{D}}^{\left(  t-s\right)  }\left(  I^{\prime}\right)
}\frac{\left\vert 3K\right\vert _{\sigma}}{\ell\left(  K\right)  ^{n-\lambda}%
}\sqrt{\left\vert K\right\vert _{\omega}}\sqrt{\sum_{J\in\mathfrak{C}%
_{\mathcal{D}}^{\left(  s\right)  }\left(  K\right)  }\left\vert \widehat
{g}\left(  J\right)  \right\vert ^{2}}\\
&  \lesssim2^{-t\left\vert \alpha\right\vert }\sqrt{A_{2}^{\lambda}}\sum
_{s=1}^{t}\left(  2^{-s}\right)  ^{\kappa-\left\vert \alpha\right\vert }%
\sum_{K\in\mathfrak{C}_{\mathcal{D}}^{\left(  t-s\right)  }\left(  I^{\prime
}\right)  }\sqrt{\left\vert K\right\vert _{\sigma}}\sqrt{\sum_{J\in
\mathfrak{C}_{\mathcal{D}}^{\left(  s\right)  }\left(  K\right)  }\left\vert
\widehat{g}\left(  J\right)  \right\vert ^{2}},
\end{align*}
where we used the $A_{2}^{\lambda}$ condition and doubling for $\sigma$ in the
last inequality. Thus we have%
\begin{align*}
\sum_{J\in\mathfrak{C}_{\mathcal{D}}^{\left(  t\right)  }\left(  I^{\prime
}\right)  }\left\vert \operatorname{Int}^{\lambda,\natural}\left(  J\right)
\right\vert  &  \lesssim2^{-t\left\vert \alpha\right\vert }\sqrt
{A_{2}^{\lambda}}\sum_{s=1}^{t}\left(  2^{-s}\right)  ^{\kappa-\left\vert
\alpha\right\vert }\sqrt{\left\vert I^{\prime}\right\vert _{\sigma}}\sqrt
{\sum_{J\in\mathfrak{C}_{\mathcal{D}}^{\left(  t\right)  }\left(  I^{\prime
}\right)  }\left\vert \widehat{g}\left(  J\right)  \right\vert ^{2}}\\
&  \lesssim2^{-t}\sqrt{A_{2}^{\lambda}}\sqrt{\left\vert I^{\prime}\right\vert
_{\sigma}}\sqrt{\sum_{J\in\mathfrak{C}_{\mathcal{D}}^{\left(  t\right)
}\left(  I^{\prime}\right)  }\left\vert \widehat{g}\left(  J\right)
\right\vert ^{2}},
\end{align*}
since $1\leq\left\vert \alpha\right\vert \leq\kappa-1$.

We now claim the same estimate holds for the sum of $\left\vert
\operatorname{Int}^{\lambda,\flat}\left(  J\right)  \right\vert $ over
$J\subset I^{\prime}$ with $\ell\left(  J\right)  =2^{-t}\ell\left(
I^{\prime}\right)  $. We write
\[
\operatorname{Int}^{\lambda,\flat}\left(  J\right)  =\sum_{k=1}^{n}%
%TCIMACRO{\dsum \limits_{\left\vert \beta\right\vert +\left\vert \gamma
%\right\vert =\left\vert \alpha\right\vert -1}}%
%BeginExpansion
{\displaystyle\sum\limits_{\left\vert \beta\right\vert +\left\vert
\gamma\right\vert =\left\vert \alpha\right\vert -1}}
%EndExpansion
c_{\alpha,\beta,\gamma}\operatorname{Int}_{k,\beta,\gamma}^{\lambda,\flat
}\left(  J\right)  ,
\]
and estimate
\begin{align*}
&  \left\vert \operatorname{Int}_{k,\beta,\gamma}^{\lambda,\flat}\left(
J\right)  \right\vert \lesssim\left\vert \int_{J}\left(  \int_{2J}\Phi
_{k}^{\lambda}\left(  x-y\right)  \left(  \frac{y-c_{J}}{\ell\left(
I^{\prime}\right)  }\right)  ^{\gamma}d\sigma\left(  y\right)  \right)
\left(  \frac{x-c_{J}}{\ell\left(  I^{\prime}\right)  }\right)  ^{\beta
}\bigtriangleup_{J;\kappa_{2}}^{\omega}g\left(  x\right)  d\omega\left(
x\right)  \right\vert \\
&  \leq\int_{J}\left(  \int_{2J}\frac{1}{\ell\left(  I\right)  \left\vert
x-y\right\vert ^{n-\lambda-1}}\left\vert \frac{y-c_{J}}{\ell\left(  I^{\prime
}\right)  }\right\vert ^{\left\vert \gamma\right\vert }d\sigma\left(
y\right)  \right)  \left\vert \frac{x-c_{J}}{\ell\left(  I^{\prime}\right)
}\right\vert ^{\left\vert \beta\right\vert }\left\vert \bigtriangleup
_{J;\kappa_{2}}^{\omega}g\left(  x\right)  \right\vert d\omega\left(  x\right)
\\
&  \lesssim\left(  \frac{\ell\left(  J\right)  }{\ell\left(  I^{\prime
}\right)  }\right)  ^{\left\vert \gamma\right\vert +\left\vert \beta
\right\vert }\frac{1}{\ell\left(  I^{\prime}\right)  \sqrt{\left\vert
J\right\vert _{\omega}}}\left\vert \widehat{g}\left(  J\right)  \right\vert
\int_{J}\int_{2J}\frac{d\sigma\left(  y\right)  d\omega\left(  x\right)
}{\left\vert x-y\right\vert ^{n-\lambda-1}}\lesssim\sqrt{A_{2}^{\lambda}}%
\frac{\ell\left(  J\right)  }{\ell\left(  I^{\prime}\right)  }\left\vert
\widehat{g}\left(  J\right)  \right\vert \sqrt{\left\vert J\right\vert
_{\sigma}},
\end{align*}
where in the last inequality we used that $\left\vert \beta\right\vert
+\left\vert \gamma\right\vert =\left\vert \alpha\right\vert \geq1$ and the
estimate
\[
\int_{J}\int_{2J}\frac{d\sigma\left(  y\right)  d\omega\left(  x\right)
}{\left\vert x-y\right\vert ^{n-\lambda-1}}\lesssim\sqrt{A_{2}^{\lambda}}%
\ell\left(  J\right)  \sqrt{\left\vert J\right\vert _{\sigma}\left\vert
J\right\vert _{\omega}}.
\]
Indeed, in order to estimate the double integral using the $A_{2}^{\lambda}$
condition we cover the band $\left\vert x-y\right\vert \leq C2^{-m}\ell\left(
J\right)  $ by a collection of cubes $Q\left(  z_{m},C2^{-m}\ell\left(
J\right)  \right)  \times Q\left(  z_{m},C2^{-m}\ell\left(  J\right)  \right)
$ in $CJ\times CJ$ with centers $\left(  z_{m},z_{m}\right)  $ and bounded
overlap. Then we have%
\begin{align*}
&  \int_{J}\int_{2J}\frac{d\sigma\left(  y\right)  d\omega\left(  x\right)
}{\left\vert x-y\right\vert ^{n-\lambda-1}}\leq\sum_{m=0}^{\infty}%
%TCIMACRO{\diint \limits_{\substack{x,y\in2J\\\left\vert x-y\right\vert
%\approx2^{-m}\ell\left(  J\right)  }}}%
%BeginExpansion
{\displaystyle\iint\limits_{\substack{x,y\in2J\\\left\vert x-y\right\vert
\approx2^{-m}\ell\left(  J\right)  }}}
%EndExpansion
\frac{d\sigma\left(  y\right)  d\omega\left(  x\right)  }{\left(  2^{-m}%
\ell\left(  J\right)  \right)  ^{n-\lambda-1}}\\
&  \approx\sum_{m=0}^{\infty}\sum_{Q\left(  z_{m},C2^{-m}\ell\left(  J\right)
\right)  \times Q\left(  z_{m},C2^{-m}\ell\left(  J\right)  \right)  }%
\int_{Q\left(  z_{m},C2^{-m}\ell\left(  J\right)  \right)  \times Q\left(
z_{m},C2^{-m}\ell\left(  J\right)  \right)  }\frac{d\sigma\left(  y\right)
d\omega\left(  x\right)  }{\left(  2^{-m}\ell\left(  J\right)  \right)
^{n-\lambda-1}}\\
&  \leq\frac{1}{\ell\left(  J\right)  ^{n-\lambda-1}}\sum_{m=0}^{\infty
}2^{m\left(  n-\lambda-1\right)  }\sum_{z_{m}}\left\vert Q\left(
z_{m},C2^{-m}\ell\left(  J\right)  \right)  \right\vert _{\sigma}\left\vert
Q\left(  z_{m},C2^{-m}\ell\left(  J\right)  \right)  \right\vert _{\omega}\\
&  \leq\frac{1}{\ell\left(  J\right)  ^{n-\lambda-1}}\sum_{m=0}^{\infty
}2^{m\left(  n-\lambda-1\right)  }\sum_{z_{m}}\sqrt{\left\vert Q\left(
z_{m},C2^{-m}\ell\left(  J\right)  \right)  \right\vert _{\sigma}\left\vert
Q\left(  z_{m},C2^{-m}\ell\left(  J\right)  \right)  \right\vert _{\omega}%
}\sqrt{A_{2}^{\lambda}}\left(  2^{-m}\ell\left(  J\right)  \right)
^{n-\lambda}\\
&  \lesssim\sqrt{A_{2}^{\lambda}}\ell\left(  J\right)  \sum_{m=0}^{\infty
}2^{m\left(  n-\lambda-1\right)  }2^{-m\left(  n-\lambda\right)  }%
\sqrt{\left\vert CJ\right\vert _{\sigma}\left\vert CJ\right\vert _{\omega}%
}\lesssim\sqrt{A_{2}^{\lambda}}\ell\left(  J\right)  \sqrt{\left\vert
J\right\vert _{\sigma}\left\vert J\right\vert _{\omega}}.
\end{align*}
Now%
\[
\sum_{J\in\mathfrak{C}_{\mathcal{D}}^{\left(  t\right)  }\left(  I^{\prime
}\right)  }\left\vert \operatorname{Int}^{\lambda,\flat}\left(  J\right)
\right\vert \lesssim\sum_{J\in\mathfrak{C}_{\mathcal{D}}^{\left(  t\right)
}\left(  I^{\prime}\right)  }\sqrt{A_{2}^{\lambda}}\frac{\ell\left(  J\right)
}{\ell\left(  I\right)  }\left\vert \widehat{g}\left(  J\right)  \right\vert
\sqrt{\left\vert J\right\vert _{\sigma}}\leq2^{-t}\sqrt{A_{2}^{\lambda}}%
\sqrt{\left\vert I^{\prime}\right\vert _{\sigma}}\sqrt{\sum_{J\in
\mathfrak{C}_{\mathcal{D}}^{\left(  t\right)  }\left(  I^{\prime}\right)
}\left\vert \widehat{g}\left(  J\right)  \right\vert ^{2}},
\]
and so altogether we have%
\begin{align*}
&  \sum_{J\in\mathfrak{C}_{\mathcal{D}}^{\left(  t\right)  }\left(  I^{\prime
}\right)  }\left\vert \left\langle \left[  P_{\alpha,c_{J},I^{\prime}%
},T_{\sigma}^{\lambda}\right]  \mathbf{1}_{I^{\prime}},\bigtriangleup
_{J;\kappa}^{\omega}g\right\rangle _{\omega}\right\vert =\sum_{J\in
\mathfrak{C}_{\mathcal{D}}^{\left(  t\right)  }\left(  I^{\prime}\right)
}\left\vert \operatorname{Int}^{\lambda}\left(  J\right)  \right\vert \\
&  \leq\sum_{J\in\mathfrak{C}_{\mathcal{D}}^{\left(  t\right)  }\left(
I^{\prime}\right)  }\left\vert \operatorname{Int}^{\lambda,\natural}\left(
J\right)  \right\vert +\sum_{J\in\mathfrak{C}_{\mathcal{D}}^{\left(  t\right)
}\left(  I^{\prime}\right)  }\left\vert \operatorname{Int}^{\lambda,\flat
}\left(  J\right)  \right\vert \lesssim2^{-t}\sqrt{A_{2}^{\lambda}}%
\sqrt{\left\vert I^{\prime}\right\vert _{\sigma}}\sqrt{\sum_{J\in
\mathfrak{C}_{\mathcal{D}}^{\left(  t\right)  }\left(  I^{\prime}\right)
}\left\vert \widehat{g}\left(  J\right)  \right\vert ^{2}}.
\end{align*}
Finally then we obtain from this and (\ref{we obtain}),%
\begin{align*}
\sum_{J\in\mathfrak{C}_{\mathcal{D}}^{\left(  t\right)  }\left(  I^{\prime
}\right)  }\left\vert \left\langle \left[  Q_{I^{\prime};\kappa},T_{\sigma
}^{\lambda}\right]  \mathbf{1}_{I^{\prime}},\bigtriangleup_{J;\kappa}^{\omega
}g\right\rangle _{\omega}\right\vert  &  =\sum_{J\in\mathfrak{C}_{\mathcal{D}%
}^{\left(  t\right)  }\left(  I^{\prime}\right)  }\left\vert \sum_{\left\vert
\alpha\right\vert \leq\kappa-1}b_{\alpha}\left\langle \left[  P_{\alpha
,c_{J},I^{\prime}},T_{\sigma}^{\lambda}\right]  \mathbf{1}_{I^{\prime}%
},\bigtriangleup_{J;\kappa}^{\omega}g\right\rangle _{\omega}\right\vert \\
&  \lesssim2^{-t}\sqrt{A_{2}^{\lambda}}\sqrt{\sum_{J\in\mathfrak{C}%
_{\mathcal{D}}^{\left(  t\right)  }\left(  I^{\prime}\right)  }\left\vert
\widehat{g}\left(  J\right)  \right\vert ^{2}}.
\end{align*}

Now using $M_{I_{J};\kappa}=\left\vert \widehat{f}\left(  I\right)
\right\vert Q_{I_{J};\kappa}$, and applying the above estimates with
$I^{\prime}=I_{J}$, we can sum over $t$ and $I\in\mathcal{C}_{F}$ to obtain
the following estimate for the \emph{below commutator} form,%
\begin{align}
&  \left\vert \mathsf{T}_{\operatorname*{commutator}}^{\subset_{\rho
,\varepsilon},F}\left(  f,g\right)  \right\vert \leq\sum_{\substack{I\in
\mathcal{C}_{F}\text{ and }J\in\mathcal{C}_{F}^{\mathbf{\tau}%
-\operatorname*{shift}}\\J\subset_{\mathbf{\rho},\varepsilon}I}}\left\vert
\widehat{f}\left(  I\right)  \right\vert \left\vert \left\langle \left[
T_{\sigma}^{\lambda},Q_{I_{J};\kappa}\right]  \mathbf{1}_{I_{_{J}}%
},\bigtriangleup_{J;\kappa_{2}}^{\omega}g\right\rangle _{\omega}\right\vert
\nonumber\\
&  \lesssim\sum_{t=\mathbf{r}}^{\infty}\sum_{I\in\mathcal{C}_{F}}2^{-t}%
\sqrt{A_{2}^{\lambda}}\left\vert \widehat{f}\left(  I\right)  \right\vert
\sqrt{\sum_{\substack{J\in\mathfrak{C}_{\mathcal{D}}^{\left(  t\right)
}\left(  I_{J}\right)  \text{ }\\J\in\mathcal{C}_{F}^{\mathbf{\tau
}-\operatorname*{shift}}}}\left\vert \widehat{g}\left(  J\right)  \right\vert
^{2}}\lesssim\sqrt{A_{2}^{\lambda}}\sum_{t=\mathbf{r}}^{\infty}2^{-t}%
\sqrt{\sum_{I\in\mathcal{C}_{F}}\left\vert \widehat{f}\left(  I\right)
\right\vert ^{2}}\sqrt{\sum_{I\in\mathcal{C}_{F}}\sum_{\substack{J\in
\mathfrak{C}_{\mathcal{D}}^{\left(  t\right)  }\left(  I_{J}\right)
\\J\in\mathcal{C}_{F}^{\mathbf{\tau}-\operatorname*{shift}}}}\left\vert
\widehat{g}\left(  J\right)  \right\vert ^{2}}\nonumber\\
&  \lesssim\sqrt{A_{2}^{\lambda}}\sum_{t=\mathbf{r}}^{\infty}2^{-t}\left\Vert
\mathsf{P}_{\mathcal{C}_{F}}^{\sigma}f\right\Vert _{L^{2}\left(
\sigma\right)  }\left\Vert \mathsf{P}_{\mathcal{C}_{F}^{\mathbf{\tau
}-\operatorname*{shift}}}^{\omega}g\right\Vert _{L^{2}\left(  \omega\right)
}\lesssim\sqrt{A_{2}^{\lambda}}\left\Vert \mathsf{P}_{\mathcal{C}_{F}}%
^{\sigma}f\right\Vert _{L^{2}\left(  \sigma\right)  }\left\Vert \mathsf{P}%
_{\mathcal{C}_{F}^{\mathbf{\tau}-\operatorname*{shift}}}^{\omega}g\right\Vert
_{L^{2}\left(  \omega\right)  }.\nonumber
\end{align}

\subsection{The below neighbour form}

We show the neighbour form converges absolutely, in the sense that the
neighbour form is bounded above by the sublinear form
\begin{align*}
\left\vert \mathsf{T}_{\operatorname*{neighbour}}^{\subset_{\rho,\varepsilon
},F} \right\vert \left(  f,g\right)   &  \equiv\sum_{\substack{I\in
\mathcal{C}_{F}\text{ and }J\in\mathcal{C}_{F}^{\mathbf{\tau}%
-\operatorname*{shift}}\\J\subset_{\mathbf{\rho},\varepsilon}I}}\sum
_{\theta\left(  I_{J}\right)  \in\mathfrak{C}_{\mathcal{D}}\left(  I\right)
\setminus\left\{  I_{J}\right\}  }\left\vert \left\langle T_{\sigma}^{\alpha
}\left(  \mathbf{1}_{\theta\left(  I_{J}\right)  }\bigtriangleup_{I;\kappa
_{1}}^{\sigma}f\right)  ,\bigtriangleup_{J;\kappa_{2}}^{\omega}g\right\rangle
_{\omega}\right\vert
\end{align*}
which satisfies
\begin{align}
\label{neigh est}\left\vert \mathsf{T}_{\operatorname*{neighbour}}%
^{\subset_{\rho,\varepsilon},F} \right\vert \left(  f,g\right)   &
\lesssim\sqrt{A_{2} ^{\alpha}} \left\Vert \mathsf{P}_{\mathcal{C}_{F}}%
^{\sigma}f\right\Vert _{L^{2}\left(  \sigma\right)  }\left\Vert \mathsf{P}%
_{\mathcal{C}_{F}^{\mathbf{\tau}-\operatorname*{shift}}}^{\omega}g\right\Vert
_{L^{2}\left(  \omega\right)  } \, .
\end{align}

We revert to using $\kappa_{1}$ and $\kappa_{2}$ in treating the \emph{below
neighbour} form, and we also revert to using $\alpha$ instead of $\lambda$ as
was done in the previous subsection. In the neighbour form we obtain the
required bound by taking absolute values inside the sum, and then arguing as
in the case of Haar wavelets in \cite[end of Subsection 8.4]{SaShUr7}. We
begin with $M_{I^{\prime};\kappa_{1}}=\mathbf{1}_{I^{\prime}}\bigtriangleup
_{I;\kappa}^{\sigma}f$ as in (\ref{def M}) to obtain
\begin{align*}
\left\vert \mathsf{T}_{\operatorname*{neighbour}}^{\subset_{\rho,\varepsilon
},F}\right\vert \left(  f,g\right)   &  =\sum_{\substack{I\in\mathcal{C}%
_{F}\text{ and }J\in\mathcal{C}_{F}^{\mathbf{\tau}-\operatorname*{shift}%
}\\J\subset_{\mathbf{\rho},\varepsilon}I}}\sum_{\theta\left(  I_{J}\right)
\in\mathfrak{C}_{\mathcal{D}}\left(  I\right)  \setminus\left\{
I_{J}\right\}  }\left\vert \left\langle T_{\sigma}^{\alpha}\left(
\mathbf{1}_{\theta\left(  I_{J}\right)  }\bigtriangleup_{I;\kappa_{1}}%
^{\sigma}f\right)  ,\bigtriangleup_{J;\kappa_{2}}^{\omega}g\right\rangle
_{\omega}\right\vert \\
&  \leq\sum_{\substack{I\in\mathcal{C}_{F}\text{ and }J\in\mathcal{C}%
_{F}^{\mathbf{\tau}-\operatorname*{shift}}\\J\subset_{\mathbf{\rho
},\varepsilon}I}}\sum_{I^{\prime}\equiv\theta\left(  I_{J}\right)
\in\mathfrak{C}_{\mathcal{D}}\left(  I\right)  \setminus\left\{
I_{J}\right\}  }\left\vert \left\langle T_{\sigma}^{\alpha}\left(
M_{I^{\prime};\kappa_{1}}\mathbf{1}_{I^{\prime}}\right)  ,\bigtriangleup
_{J;\kappa_{2}}^{\omega}g\right\rangle _{\omega}\right\vert
\end{align*}
Using the pivotal bound (\ref{piv bound}) on the inner product with
$\nu=\left\Vert M_{I^{\prime};\kappa_{1}}\right\Vert _{L^{\infty}\left(
\sigma\right)  }\mathbf{1}_{I^{\prime}}d\sigma$, and then estimating by the
usual Poisson kernel,%
\begin{align*}
\left\vert \left\langle T_{\sigma}^{\alpha}\left(  M_{I^{\prime};\kappa_{1}%
}\mathbf{1}_{I^{\prime}}\right)  ,\bigtriangleup_{J;\kappa_{2}}^{\omega
}g\right\rangle _{\omega}\right\vert  &  \lesssim\mathrm{P}_{\kappa_{2}%
}^{\alpha}\left(  J,\left\Vert M_{I^{\prime};\kappa_{1}}\right\Vert
_{L^{\infty}\left(  \sigma\right)  }\mathbf{1}_{I^{\prime}}\sigma\right)
\sqrt{\left\vert J\right\vert _{\omega}}\left\Vert \bigtriangleup
_{J;\kappa_{2}}^{\omega}g\right\Vert _{L^{2}\left(  \omega\right)  }\\
&  \leq\left\Vert M_{I^{\prime};\kappa_{1}}\right\Vert _{L^{\infty}\left(
\sigma\right)  }\mathrm{P}_{\kappa_{2}}^{\alpha}\left(  J,\mathbf{1}%
_{I^{\prime}}\sigma\right)  \sqrt{\left\vert J\right\vert _{\omega}}\left\Vert
\bigtriangleup_{J;\kappa_{2}}^{\omega}g\right\Vert _{L^{2}\left(
\omega\right)  },
\end{align*}
and the estimate $\left\Vert M_{I^{\prime};\kappa_{1}}\right\Vert _{L^{\infty
}\left(  \sigma\right)  }\approx\frac{1}{\sqrt{\left\vert I^{\prime
}\right\vert _{\sigma}}}\left\vert \widehat{f}\left(  I\right)  \right\vert $
from (\ref{analogue'}), to obtain%
\begin{align*}
&  \left\vert \mathsf{T}_{\operatorname*{neighbour}}^{\subset_{\rho
,\varepsilon},F}\right\vert \left(  f,g\right)  \lesssim\sum_{\substack{I\in
\mathcal{C}_{F}\text{ and }J\in\mathcal{C}_{F}^{\mathbf{\tau}%
-\operatorname*{shift}}\\J\subset_{\mathbf{\rho},\varepsilon}I}}\sum
_{I^{\prime}\equiv\theta\left(  I_{J}\right)  \in\mathfrak{C}_{\mathcal{D}%
}\left(  I\right)  \setminus\left\{  I_{J}\right\}  }\frac{\left\vert
\widehat{f}\left(  I\right)  \right\vert }{\sqrt{\left\vert I^{\prime
}\right\vert _{\sigma}}}\mathrm{P}_{\kappa_{2}}^{\alpha}\left(  J,\mathbf{1}%
_{I^{\prime}}\sigma\right)  \sqrt{\left\vert J\right\vert _{\omega}}\left\Vert
\bigtriangleup_{J;\kappa_{2}}^{\omega}g\right\Vert _{L^{2}\left(
\omega\right)  }\\
&  =\sum_{I\in\mathcal{C}_{F}}\sum_{\substack{I_{0},I_{\theta}\in
\mathfrak{C}_{\mathcal{D}}\left(  I\right)  \\I_{0}\neq I_{\theta}}%
}\sum_{\substack{J\in\mathcal{C}_{F}^{\mathbf{\tau}-\operatorname*{shift}%
}\\J\subset_{\mathbf{\rho},\varepsilon}I\text{ and }J\subset I_{0}}%
}\frac{\left\vert \widehat{f}\left(  I\right)  \right\vert }{\sqrt{\left\vert
I_{\theta}\right\vert _{\sigma}}}\mathrm{P}_{\kappa_{2}}^{\alpha}\left(
J,\mathbf{1}_{I_{\theta}}\sigma\right)  \sqrt{\left\vert J\right\vert
_{\omega}}\left\Vert \bigtriangleup_{J;\kappa_{2}}^{\omega}g\right\Vert
_{L^{2}\left(  \omega\right)  }\\
&  =\sum_{s=\mathbf{r}}^{\infty}\sum_{I\in\mathcal{C}_{F}}\sum
_{\substack{I_{0},I_{\theta}\in\mathfrak{C}_{\mathcal{D}}\left(  I\right)
\\I_{0}\neq I_{\theta}}}\sum_{\substack{J\in\mathcal{C}_{F}^{\mathbf{\tau
}-\operatorname*{shift}}\text{ and }\ell\left(  J\right)  =2^{-s}\ell\left(
I\right)  \\J\subset_{\mathbf{\rho},\varepsilon}I\text{ and }J\subset I_{0}%
}}\frac{\left\vert \widehat{f}\left(  I\right)  \right\vert }{\sqrt{\left\vert
I_{\theta}\right\vert _{\sigma}}}\mathrm{P}_{\kappa_{2}}^{\alpha}\left(
J,\mathbf{1}_{I_{\theta}}\sigma\right)  \sqrt{\left\vert J\right\vert
_{\omega}}\left\Vert \bigtriangleup_{J;\kappa_{2}}^{\omega}g\right\Vert
_{L^{2}\left(  \omega\right)  }\\
&  \lesssim\sum_{s=\mathbf{r}}^{\infty}\sum_{I\in\mathcal{C}_{F}}%
\sum_{\substack{I_{0},I_{\theta}\in\mathfrak{C}_{\mathcal{D}}\left(  I\right)
\\I_{0}\neq I_{\theta}}}\sum_{\substack{J\in\mathcal{C}_{F}^{\mathbf{\tau
}-\operatorname*{shift}}\text{ and }\ell\left(  J\right)  =2^{-s}\ell\left(
I\right)  \\J\subset_{\mathbf{\rho},\varepsilon}I\text{ and }J\subset I_{0}%
}}\frac{\left\vert \widehat{f}\left(  I\right)  \right\vert }{\sqrt{\left\vert
I_{\theta}\right\vert _{\sigma}}}\left\{  \left(  2^{-s}\right)
^{1-\varepsilon\left(  n+1-\lambda\right)  }\mathrm{P}_{\kappa_{2}}^{\alpha
}\left(  I_{0},\mathbf{1}_{I_{\theta}}\sigma\right)  \right\}  \sqrt
{\left\vert J\right\vert _{\omega}}\left\Vert \bigtriangleup_{J;\kappa_{2}%
}^{\omega}g\right\Vert _{L^{2}\left(  \omega\right)  }\,.
\end{align*}
By Lemma \ref{doub piv} and the Muckenhoupt condition, this is
\[
\lesssim\sqrt{A_{2}^{\alpha}(\sigma,\omega)}\sum_{s=\mathbf{r}}^{\infty}%
\sum_{I\in\mathcal{C}_{F}}\sum_{\substack{I_{0},I_{\theta}\in\mathfrak{C}%
_{\mathcal{D}}\left(  I\right)  \\I_{0}\neq I_{\theta}}}\sum_{\substack{J\in
\mathcal{C}_{F}^{\mathbf{\tau}-\operatorname*{shift}}\text{ and }\ell\left(
J\right)  =2^{-s}\ell\left(  I\right)  \\J\subset_{\mathbf{\rho},\varepsilon
}I\text{ and }J\subset I_{0}}}\left\vert \widehat{f}\left(  I\right)
\right\vert \left\{  \left(  2^{-s}\right)  ^{1-\varepsilon\left(
n+1-\lambda\right)  }\frac{\sqrt{\left\vert J\right\vert _{\omega}}}%
{\sqrt{\left\vert I\right\vert _{\omega}}}\right\}  \left\Vert \bigtriangleup
_{J;\kappa_{2}}^{\omega}g\right\Vert _{L^{2}\left(  \omega\right)  }\,,
\]
and by Cauchy-Schwarz, this is at most
\begin{align*}
\sqrt{A_{2}^{\alpha}(\sigma,\omega)}  &  \sum_{s=\mathbf{r}}^{\infty}\left(
2^{-s}\right)  ^{1-\varepsilon\left(  n+1-\lambda\right)  }\left(  \sum
_{I\in\mathcal{C}_{F}}\sum_{\substack{I_{0},I_{\theta}\in\mathfrak{C}%
_{\mathcal{D}}\left(  I\right)  \\I_{0}\neq I_{\theta}}}\sum_{\substack{J\in
\mathcal{C}_{F}^{\mathbf{\tau}-\operatorname*{shift}}\text{ and }\ell\left(
J\right)  =2^{-s}\ell\left(  I\right)  \\J\subset_{\mathbf{\rho},\varepsilon
}I\text{ and }J\subset I_{0}}}\left\vert \widehat{f}\left(  I\right)
\right\vert ^{2}\frac{\left\vert J\right\vert _{\omega}}{\left\vert
I\right\vert _{\omega}}\right)  ^{\frac{1}{2}}\\
&  \times\left(  \sum_{I\in\mathcal{C}_{F}}\sum_{\substack{I_{0},I_{\theta}%
\in\mathfrak{C}_{\mathcal{D}}\left(  I\right)  \\I_{0}\neq I_{\theta}}%
}\sum_{\substack{J\in\mathcal{C}_{F}^{\mathbf{\tau}-\operatorname*{shift}%
}\text{ and }\ell\left(  J\right)  =2^{-s}\ell\left(  I\right)  \\J\subset
_{\mathbf{\rho},\varepsilon}I\text{ and }J\subset I_{0}}}\left\Vert
\bigtriangleup_{J;\kappa_{2}}^{\omega}g\right\Vert _{L^{2}\left(
\omega\right)  }\right)  ^{\frac{1}{2}}\,.
\end{align*}

Now we note that%
\[
\sum_{I\in\mathcal{C}_{F}}\sum_{\substack{I_{0},I_{\theta}\in\mathfrak{C}%
_{\mathcal{D}}\left(  I\right)  \\I_{0}\neq I_{\theta}}}\sum_{\substack{J\in
\mathcal{C}_{F}^{\mathbf{\tau}-\operatorname*{shift}}\text{ and }\ell\left(
J\right)  =2^{-s}\ell\left(  I\right)  \\J\subset_{\mathbf{\rho},\varepsilon
}I\text{ and }J\subset I_{0}}} \left\Vert \bigtriangleup_{J;\kappa_{2}%
}^{\omega}g\right\Vert _{L^{2}\left(  \omega\right)  }^{2} \lesssim\left\Vert
\mathsf{P}_{\mathcal{C}_{F}^{\mathbf{\tau}-\operatorname*{shift}}}^{\omega
}g\right\Vert _{L^{2}\left(  \omega\right)  }^{2}\ ,
\]
and
\[
\sum_{\substack{J\in\mathcal{C}_{F}^{\mathbf{\tau}-\operatorname*{shift}%
}\text{ and }\ell\left(  J\right)  =2^{-s}\ell\left(  I\right)  \\J\subset
_{\mathbf{\rho},\varepsilon}I\text{ and }J\subset I_{0}}}\frac{\left\vert
J\right\vert _{\omega}}{\left\vert I\right\vert _{\omega}}\lesssim1 \, ,
\]
and use $\left\Vert \mathsf{P}_{\mathcal{C}_{F}}^{\sigma}f\right\Vert
_{L^{2}\left(  \sigma\right)  }^{2}=\sum_{I\in\mathcal{C}_{F}}\left\vert
\widehat{f}\left(  I\right)  \right\vert ^{2}$, to obtain%

\[
\left\vert \mathsf{T}_{\operatorname*{neighbour}}^{\subset_{\rho,\varepsilon
},F}\right\vert \left(  f,g\right)  \lesssim\sqrt{A_{2}^{\alpha}}%
\sum_{s=\mathbf{r}}^{\infty}\left(  2^{-s}\right)  ^{1-\varepsilon\left(
n+1-\lambda\right)  }\left\Vert \mathsf{P}_{\mathcal{C}_{F}}^{\sigma
}\right\Vert _{L^{2}\left(  \sigma\right)  }\left\Vert \mathsf{P}%
_{\mathcal{C}_{F}^{\mathbf{\tau}-\operatorname*{shift}}}^{\omega}\right\Vert
_{L^{2}\left(  \omega\right)  }\lesssim\sqrt{A_{2}^{\alpha}}\left\Vert
\mathsf{P}_{\mathcal{C}_{F}}^{\sigma}\right\Vert _{L^{2}\left(  \sigma\right)
}\left\Vert \mathsf{P}_{\mathcal{C}_{F}^{\mathbf{\tau}-\operatorname*{shift}}%
}^{\omega}\right\Vert _{L^{2}\left(  \omega\right)  }\,.
\]

\subsection{The below stopping form}

We also show the \emph{below stopping} form converges absolutely, in the sense
that the below stopping form is bounded by the sublinear form
\begin{equation}
\left\vert \mathsf{T}_{\operatorname*{stop}}^{\subset_{\rho,\varepsilon}%
,F}\right\vert \left(  f,g\right)  \equiv\sum_{I\in\mathcal{C}_{F}}%
\sum_{I^{\prime}\in\mathfrak{C}_{\mathcal{D}}\left(  I\right)  }%
\sum_{\substack{J\in\mathcal{C}_{F}^{\mathbf{\tau}-\operatorname*{shift}%
}\\J\subset I^{\prime}\text{ and }J\subset_{\mathbf{\rho},\varepsilon}%
I}}\left\vert \left\langle M_{I^{\prime};\kappa_{1}}T_{\sigma}^{\alpha
}\mathbf{1}_{F\setminus I^{\prime}},\bigtriangleup_{J;\kappa_{2}}^{\omega
}g\right\rangle _{\omega}\right\vert \lesssim\sqrt{A_{2}^{\alpha}}\left\Vert
\mathsf{P}_{\mathcal{C}_{F}}^{\sigma}\right\Vert _{L^{2}\left(  \sigma\right)
}\left\Vert \mathsf{P}_{\mathcal{C}_{F}^{\mathbf{\tau}-\operatorname*{shift}}%
}^{\omega}\right\Vert _{L^{2}\left(  \omega\right)  }\,, \label{stop est}%
\end{equation}
using nearly the same proof as for (\ref{neigh est}), except that here we need
to use the extra vanishing moments of $\bigtriangleup_{J;\kappa_{2}}^{\omega
}g$ to penetrate past the polynomial $M_{I^{\prime};\kappa_{1}}$ of degree
$\kappa_{1}$ and reach the kernel $T_{\sigma}^{\alpha}$.

To bound the \emph{below stopping} form $\mathsf{T}_{\operatorname*{stop}%
}^{\subset_{\rho,\varepsilon},F}\left(  f,g\right)  $, we will use the
$\kappa_{1}$-pivotal condition together with a variant of the Haar stopping
form argument due to Nazarov, Treil and Volberg \cite{NTV4}. Most importantly
we will assume in this subsection that%
\begin{equation}
\kappa_{2}\geq2\kappa_{1}\text{.} \label{double}%
\end{equation}

Recall that%
\[
M_{I^{\prime}; \kappa} = \left\vert \widehat{f}\left(  I\right)  \right\vert
Q_{I^{\prime};\kappa}=\mathbb{E}_{I^{\prime};\kappa}^{\sigma}f-\mathbf{1}%
_{I^{\prime}}\mathbb{E}_{I;\kappa}^{\sigma}f.
\]
We begin the proof by pigeonholing the ratio of side lengths of $I$ and $J$ in
the stopping form:%
\begin{align*}
\left|  \mathsf{T}_{\operatorname*{stop}}^{\subset_{\rho,\varepsilon},F}
\right|  \left(  f,g\right)   &  = \sum_{I\in\mathcal{C}_{F}}\sum_{I^{\prime
}\in\mathfrak{C}_{\mathcal{D}}\left(  I\right)  }\sum_{\substack{J\in
\mathcal{C}_{F}^{\mathbf{\tau}-\operatorname*{shift}}\\J\subset I^{\prime
}\text{ and }J\subset_{\mathbf{\rho},\varepsilon}I}}\left\vert \widehat
{f}\left(  I\right)  \right\vert \left|  \left\langle Q_{I^{\prime};\kappa
_{1}}T_{\sigma}^{\alpha}\mathbf{1}_{F\setminus I^{\prime}},\bigtriangleup
_{J;\kappa_{2}}^{\omega}g\right\rangle _{\omega} \right| \\
&  =\sum_{s=0}^{\infty}\sum_{I\in\mathcal{C}_{F}}\sum_{I^{\prime}%
\in\mathfrak{C}_{\mathcal{D}}\left(  I\right)  }\sum_{\substack{J\in
\mathcal{C}_{F}^{\mathbf{\tau}-\operatorname*{shift}}\text{and }\ell\left(
J\right)  =2^{-s}\ell\left(  I\right)  \\J\subset I^{\prime}\text{ and
}J\subset_{\mathbf{\rho},\varepsilon}I}}\left\vert \widehat{f}\left(
I\right)  \right\vert \left|  \left\langle Q_{I^{\prime};\kappa_{1}}T_{\sigma
}^{\alpha}\mathbf{1}_{F\setminus I^{\prime}},\bigtriangleup_{J;\kappa_{2}%
}^{\omega}g\right\rangle _{\omega} \right|  \equiv\sum_{s=0}^{\infty} \left|
\mathsf{T}_{\operatorname*{stop},s}^{\subset_{\rho,\varepsilon},F} \right|
\left(  f,g\right)  \ .
\end{align*}
By (\ref{piv bound}) with $R=\frac{Q_{I^{\prime};\kappa_{1}}}{\left\Vert
Q_{I^{\prime};\kappa_{1}}\right\Vert _{\infty}}$, and then using $\left\Vert
Q_{I^{\prime};\kappa_{1}}\right\Vert _{\infty}\lesssim\frac{1}{\sqrt
{\left\vert I^{\prime}\right\vert _{\sigma}}}$, the Poisson inequality
(\ref{e.Jsimeq}), and the crucial assumption (\ref{double}), we obtain the
last display is at most a constant times
\begin{align}
&  \sum_{I\in\mathcal{C}_{F}}\sum_{I^{\prime}\in\mathfrak{C}_{\mathcal{D}%
}\left(  I\right)  }\sum_{\substack{J\in\mathcal{C}_{F}^{\mathbf{\tau
}-\operatorname*{shift}}\text{and }\ell\left(  J\right)  =2^{-s}\ell\left(
I\right)  \\J\subset I^{\prime}\text{ and }J\subset_{\mathbf{\rho}%
,\varepsilon}I}}\left\vert \widehat{f}\left(  I\right)  \right\vert \left\Vert
Q_{I^{\prime};\kappa_{1}}\right\Vert _{\infty}\ \mathrm{P}_{\kappa_{1}%
}^{\alpha}\left(  J,\mathbf{1}_{F\setminus I^{\prime}}\sigma\right)
\sqrt{\left\vert J\right\vert _{\omega}}\left\Vert \bigtriangleup
_{J;\kappa_{2}}^{\omega}g\right\Vert _{L^{2}\left(  \omega\right)
}\label{using crucial}\\
&  \lesssim\sum_{I\in\mathcal{C}_{F}}\sum_{I^{\prime}\in\mathfrak{C}%
_{\mathcal{D}}\left(  I\right)  }\sum_{\substack{J\in\mathcal{C}%
_{F}^{\mathbf{\tau}-\operatorname*{shift}}\text{and }\ell\left(  J\right)
=2^{-s}\ell\left(  I\right)  \\J\subset I^{\prime}\text{ and }J\subset
_{\mathbf{\rho},\varepsilon}I}}\left\vert \widehat{f}\left(  I\right)
\right\vert \frac{1}{\sqrt{\left\vert I^{\prime}\right\vert _{\sigma}}%
}\ \left(  2^{-s}\right)  ^{\kappa_{1}-\varepsilon\left(  n+\kappa_{1}%
-\lambda\right)  }\mathrm{P}_{\kappa_{1}}^{\alpha}\left(  I,\mathbf{1}%
_{F\setminus I^{\prime}}\sigma\right)  \sqrt{\left\vert J\right\vert _{\omega
}}\left\Vert \bigtriangleup_{J;\kappa_{2}}^{\omega}g\right\Vert _{L^{2}\left(
\omega\right)  }\nonumber\\
&  \lesssim\sum_{I\in\mathcal{C}_{F}}\sum_{I^{\prime}\in\mathfrak{C}%
_{\mathcal{D}}\left(  I\right)  }\sum_{\substack{J\in\mathcal{C}%
_{F}^{\mathbf{\tau}-\operatorname*{shift}}\text{and }\ell\left(  J\right)
=2^{-s}\ell\left(  I\right)  \\J\subset I^{\prime}\text{ and }J\subset
_{\mathbf{\rho},\varepsilon}I}}\left\vert \widehat{f}\left(  I\right)
\right\vert \frac{1}{\sqrt{\left\vert I^{\prime}\right\vert _{\sigma}}%
}\ \left(  2^{-s}\right)  ^{\kappa_{1}-\varepsilon\left(  n+\kappa_{1}%
-\lambda\right)  }\frac{\left\vert I\right\vert _{\sigma}}{\left\vert
I\right\vert ^{1-\frac{\alpha}{n}}}\sqrt{\left\vert J\right\vert _{\omega}%
}\left\Vert \bigtriangleup_{J;\kappa_{2}}^{\omega}g\right\Vert _{L^{2}\left(
\omega\right)  }\, .\nonumber
\end{align}
By Lemma \ref{doub piv} and the Muckenhoupt condition, this is
\begin{align}
&  \lesssim\sqrt{A_{2}^{\alpha}\left(  \sigma,\omega\right)  }\sum
_{I\in\mathcal{C}_{F}}\sum_{I^{\prime}\in\mathfrak{C}_{\mathcal{D}}\left(
I\right)  }\sum_{\substack{J\in\mathcal{C}_{F}^{\mathbf{\tau}%
-\operatorname*{shift}}\text{and }\ell\left(  J\right)  =2^{-s}\ell\left(
I\right)  \\J\subset I^{\prime}\text{ and }J\subset_{\mathbf{\rho}%
,\varepsilon}I}}\left\vert \widehat{f}\left(  I\right)  \right\vert
\frac{\sqrt{\left\vert J\right\vert _{\omega}}}{\sqrt{\left\vert I^{\prime
}\right\vert _{\omega}}}\ \left(  2^{-s}\right)  ^{\kappa_{1}-\varepsilon
\left(  n+\kappa_{1}-\lambda\right)  }\left\Vert \bigtriangleup_{J;\kappa_{2}%
}^{\omega}g\right\Vert _{L^{2}\left(  \omega\right)  }\nonumber\\
&  \lesssim\left(  2^{-s}\right)  ^{\kappa_{1}-\varepsilon\left(  n+\kappa
_{1}-\lambda\right)  }\sqrt{A_{2}^{\alpha}\left(  \sigma,\omega\right)  }%
\sqrt{\sum_{I\in\mathcal{C}_{F}}\sum_{I^{\prime}\in\mathfrak{C}_{\mathcal{D}%
}\left(  I\right)  }\sum_{\substack{J\in\mathcal{C}_{F}^{\mathbf{\tau
}-\operatorname*{shift}}\text{and }\ell\left(  J\right)  =2^{-s}\ell\left(
I\right)  \\J\subset I^{\prime}\text{ and }J\subset_{\mathbf{\rho}%
,\varepsilon}I}}\left\vert \widehat{f}\left(  I\right)  \right\vert ^{2}%
\frac{\left\vert J\right\vert _{\omega}}{\left\vert I^{\prime}\right\vert
_{\omega}}}\nonumber\\
&
\ \ \ \ \ \ \ \ \ \ \ \ \ \ \ \ \ \ \ \ \ \ \ \ \ \ \ \ \ \ \ \ \ \ \ \times
\sqrt{\sum_{I\in\mathcal{C}_{F}}\sum_{I^{\prime}\in\mathfrak{C}_{\mathcal{D}%
}\left(  I\right)  }\sum_{\substack{J\in\mathcal{C}_{F}^{\mathbf{\tau
}-\operatorname*{shift}}\text{and }\ell\left(  J\right)  =2^{-s}\ell\left(
I\right)  \\J\subset I^{\prime}\text{ and }J\subset_{\mathbf{\rho}%
,\varepsilon}I}}\left\Vert \bigtriangleup_{J;\kappa_{2}}^{\omega}g\right\Vert
_{L^{2}\left(  \omega\right)  }^{2}}\ .\nonumber
\end{align}
Now we note that%
\[
\sum_{I\in\mathcal{C}_{F}}\sum_{I^{\prime}\in\mathfrak{C}_{\mathcal{D}}\left(
I\right)  }\sum_{\substack{J\in\mathcal{C}_{F}^{\mathbf{\tau}%
-\operatorname*{shift}}\text{and }\ell\left(  J\right)  =2^{-s}\ell\left(
I\right)  \\J\subset I^{\prime}\text{ and }J\subset_{\mathbf{\rho}%
,\varepsilon}I}}\left\Vert \bigtriangleup_{J;\kappa_{2}}^{\omega}g\right\Vert
_{L^{2}\left(  \omega\right)  }^{2}\lesssim\left\Vert \mathsf{P}%
_{\mathcal{C}_{F}^{\mathbf{\tau}-\operatorname*{shift}}}^{\omega}g\right\Vert
_{L^{2}\left(  \omega\right)  }^{2}\ ,
\]
and use $\left\Vert \mathsf{P}_{\mathcal{C}_{F}}^{\sigma}f\right\Vert
_{L^{2}\left(  \sigma\right)  }^{2}=\sum_{I\in\mathcal{C}_{F}}\left\vert
\widehat{f}\left(  I\right)  \right\vert ^{2}$, and $\sum_{\substack{J\in
\mathcal{C}_{F}^{\mathbf{\tau}-\operatorname*{shift}}\text{and }\ell\left(
J\right)  =2^{-s}\ell\left(  I\right)  \\J\subset I^{\prime}\text{ and
}J\subset_{\mathbf{\rho},\varepsilon}I}}\frac{\left\vert J\right\vert
_{\omega}}{\left\vert I^{\prime}\right\vert _{\omega}}\leq1$, to obtain%
\[
\left\vert \mathsf{T}_{\operatorname*{stop},s}^{\subset_{\rho,\varepsilon},F}
\right\vert \left(  f,g\right)  \lesssim\left(  2^{-s}\right)  ^{\kappa
_{1}-\varepsilon\left(  n+\kappa_{1}-\alpha\right)  }\sqrt{A_{2}^{\alpha
}\left(  \sigma,\omega\right)  }\left\Vert \mathsf{P}_{\mathcal{C}_{F}%
}^{\sigma}f\right\Vert _{L^{2}\left(  \sigma\right)  }\left\Vert
\mathsf{P}_{\mathcal{C}_{F}^{\mathbf{\tau}-\operatorname*{shift}}}^{\omega
}g\right\Vert _{L^{2}\left(  \omega\right)  }.
\]
Finally then we sum in $s$ to obtain%
\begin{align}
&  \ \ \ \ \ \ \ \ \ \ \ \ \ \ \ \left\vert \mathsf{T}_{\operatorname*{stop}%
}^{\subset_{\rho,\varepsilon},F} \right\vert \left(  f,g\right)  \leq
\sqrt{A_{2}^{\alpha}\left(  \sigma,\omega\right)  }\sum_{s=0}^{\infty
}\left\vert \mathsf{T}_{\operatorname*{stop},s}^{\subset_{\rho,\varepsilon}%
,F}\left(  f,g\right)  \right\vert \lesssim\sqrt{A_{2}^{\alpha}\left(
\sigma,\omega\right)  }\left\Vert \mathsf{P}_{\mathcal{C}_{F}}^{\sigma
}f\right\Vert _{L^{2}\left(  \sigma\right)  }\left\Vert \mathsf{P}%
_{\mathcal{C}_{F}^{\mathbf{\tau}-\operatorname*{shift}}}^{\omega}g\right\Vert
_{L^{2}\left(  \omega\right)  },\nonumber
\end{align}
if we take $0<\varepsilon<\frac{\kappa_{1}}{n+\kappa_{1}-\lambda}$.

\section{Above diagonal form and the parallel corona}

In order to treat the \emph{above} form $\mathsf{B}_{\supset_{\mathbf{\rho
},\varepsilon}}\left(  f,g\right)  $, we again start with the \emph{Canonical
Splitting} of $\mathsf{B}_{\supset_{\mathbf{\rho},\varepsilon}}\left(
f,g\right)  $ in (\ref{cube size}) just as we did for the \emph{below} form
$\mathsf{B}_{\subset_{\mathbf{\rho},\varepsilon}}\left(  f,g\right)  $,%
\begin{align*}
\mathsf{B}_{\supset_{\mathbf{\rho},\varepsilon}}\left(  f,g\right)   &
=\sum_{G\in\mathcal{G}}\left\langle T_{\sigma}\left(  \mathsf{P}%
_{\mathcal{C}_{G}}^{\sigma}f\right)  ,\mathsf{P}_{\mathcal{C}_{G}%
^{\mathbf{\tau}-\operatorname*{shift}}}^{\omega}g\right\rangle _{\omega
}^{\supset_{\mathbf{\rho},\varepsilon}}+\sum_{\substack{F,G\in\mathcal{G}%
\\F\subsetneqq G}}\left\langle T_{\sigma}\left(  \mathsf{P}_{\mathcal{C}_{F}%
}^{\sigma}f\right)  ,\mathsf{P}_{\mathcal{C}_{G}^{\mathbf{\tau}%
-\operatorname*{shift}}}^{\omega}g\right\rangle _{\omega}^{\supset
_{\mathbf{\rho},\varepsilon}}\\
&  +\sum_{\substack{F,G\in\mathcal{G}\\F\supsetneqq G}}\left\langle T_{\sigma
}\left(  \mathsf{P}_{\mathcal{C}_{F}}^{\sigma}f\right)  ,\mathsf{P}%
_{\mathcal{C}_{G}^{\mathbf{\tau}-\operatorname*{shift}}}^{\omega
}g\right\rangle _{\omega}^{\supset_{\mathbf{\rho},\varepsilon}}+\sum
_{\substack{F,G\in\mathcal{G}\\F\cap G=\emptyset}}\left\langle T_{\sigma
}\left(  \mathsf{P}_{\mathcal{C}_{F}}^{\sigma}f\right)  ,\mathsf{P}%
_{\mathcal{C}_{G}^{\mathbf{\tau}-\operatorname*{shift}}}^{\omega
}g\right\rangle _{\omega}^{\supset_{\mathbf{\rho},\varepsilon}}\\
&  \equiv\mathsf{T}_{\operatorname*{diagonal}}^{\supset_{\mathbf{\rho
},\varepsilon}}\left(  f,g\right)  +\mathsf{T}_{\operatorname*{far}%
\operatorname*{below}}^{\supset_{\mathbf{\rho},\varepsilon}}\left(
f,g\right)  +\mathsf{T}_{\operatorname*{far}\operatorname*{above}}%
^{\supset_{\mathbf{\rho},\varepsilon}}\left(  f,g\right)  +\mathsf{T}%
_{\operatorname*{disjoint}}^{\supset_{\mathbf{\rho},\varepsilon}}\left(
f,g\right)  ,
\end{align*}
where the underlying Alpert projections defining $\mathsf{P}_{\mathcal{C}_{F}%
}^{\sigma}f$ and $\mathsf{P}_{\mathcal{C}_{F}^{\mathbf{\tau}%
-\operatorname*{shift}}}^{\omega}g$ are $\bigtriangleup_{I;\kappa_{1}}%
^{\sigma}f$ and $\bigtriangleup_{J;\kappa_{2}}^{\sigma}f$ respectively with
$\kappa_{2}\geq2\kappa_{1}$ and $\kappa_{1}\geq\kappa$. Since the treatment of
the forms $\mathsf{T}_{\operatorname*{far}\operatorname*{below}}%
^{\subset_{\mathbf{\rho},\varepsilon}}\left(  f,g\right)  $, $\mathsf{T}%
_{\operatorname*{far}\operatorname*{above}}^{\subset_{\mathbf{\rho
},\varepsilon}}\left(  f,g\right)  $ and $\mathsf{T}_{\operatorname*{disjoint}%
}^{\subset_{\mathbf{\rho},\varepsilon}}\left(  f,g\right)  $ in the case of
the below form $\mathsf{B}_{\subset_{\mathbf{\rho},\varepsilon}}\left(
f,g\right)  $ did not use the crucial assumption (\ref{double}), but only
$\kappa_{1},\kappa_{2}\geq\kappa$, the same arguments used there bound the
forms $\mathsf{T}_{\operatorname*{far}\operatorname*{below}}^{\supset
_{\mathbf{\rho},\varepsilon}}\left(  f,g\right)  $, $\mathsf{T}%
_{\operatorname*{far}\operatorname*{above}}^{\supset_{\mathbf{\rho
},\varepsilon}}\left(  f,g\right)  $ and $\mathsf{T}_{\operatorname*{disjoint}%
}^{\supset_{\mathbf{\rho},\varepsilon}}\left(  f,g\right)  $ here using only
that $\kappa_{1},\kappa_{2}\geq\kappa$. This leaves just the \emph{above
diagonal} form $\mathsf{T}_{\operatorname*{diagonal}}^{\supset_{\mathbf{\rho
},\varepsilon}}\left(  f,g\right)  $ to be bounded, where we can no longer use
the NTV reach since the stopping form associated with $\mathsf{T}%
_{\operatorname*{diagonal}}^{\supset_{\mathbf{\rho},\varepsilon}}\left(
f,g\right)  $ requires the crucial assumption $\kappa_{1}\geq2\kappa_{2}$,
which we do \emph{not} have because of (\ref{double}). Instead, we must
manipulate the \emph{above diagonal} form $\mathsf{T}%
_{\operatorname*{diagonal}}^{\supset_{\mathbf{\rho},\varepsilon}}\left(
f,g\right)  $ by adding `pieces' of the \emph{below} form $\mathsf{B}%
_{\operatorname{below}}^{\subset_{\mathbf{\rho},\varepsilon}}\left(
f,g\right)  $ to create a parallel corona like decomposition to which we can
apply the indicator / cube testing condition.

Here is a brief schematic diagram of the decompositions used for the
\emph{above diagonal} form $\mathsf{T}_{\operatorname*{diagonal}}%
^{\supset_{\mathbf{\rho},\varepsilon}}\left(  f,g\right)  $,%
\[
\fbox{$%
\begin{array}
[c]{ccccc}%
\mathsf{T}_{\operatorname*{diagonal}}^{\supset_{\mathbf{\rho},\varepsilon}%
}\left(  f,g\right)  &  &  &  & \\
\downarrow &  &  &  & \\
\mathsf{T}_{\operatorname*{diagonal}}^{\supset_{\mathbf{\rho},\varepsilon
},\operatorname*{parallel}}\left(  f,g\right)  & - & \mathsf{T}%
_{\operatorname*{diagonal}}^{\subset_{\mathbf{\rho},\varepsilon}%
,\operatorname*{alt}}\left(  f,g\right)  & - & \operatorname{Error}\left(
f,g\right) \\
&  & \downarrow &  & \\
&  & \mathsf{T}_{\operatorname*{diagonal}}^{\subset_{\mathbf{\rho}%
,\varepsilon}}\left(  f,g\right)  & - & \mathsf{T}_{\operatorname*{diagonal}%
}^{\subset_{\mathbf{\rho},\varepsilon},\operatorname{diff}}\left(  f,g\right)
\end{array}
$}.
\]
The decoupled\emph{ parallel above} form $\mathsf{T}_{\operatorname*{diagonal}%
}^{\supset_{\mathbf{\rho},\varepsilon},\operatorname*{parallel}}\left(
f,g\right)  $ will be controlled by the dual indicator / cube testing
condition. The \emph{diagonal below }form $\mathsf{T}%
_{\operatorname*{diagonal}}^{\subset_{\mathbf{\rho},\varepsilon}}\left(
f,g\right)  $ has already been bounded in the previous section, and the
\emph{difference diagonal below} form $\mathsf{T}_{\operatorname*{diagonal}%
}^{\subset_{\mathbf{\rho},\varepsilon},\operatorname{diff}}\left(  f,g\right)
$ will be handled here in a\ virtually identical way. Finally, the error form
$\operatorname{Error1}\left(  f,g\right)  $ will be controlled by sublinear
variants of the comparable and disjoint forms.

\subsection{Bounding the above diagonal form}

We write,%
\[
\mathsf{T}_{\operatorname*{diagonal}}^{\supset_{\mathbf{\rho},\varepsilon}%
}\left(  f,g\right)  =\sum_{G\in\mathcal{G}}\left\langle T_{\sigma}^{\alpha
}\left(  \mathsf{P}_{\mathcal{C}_{G}}^{\sigma}f\right)  ,\mathsf{P}%
_{\mathcal{C}_{G}^{\mathbf{\tau}-\operatorname*{shift}}}^{\omega
}g\right\rangle _{\omega}^{\supset_{\mathbf{\rho},\varepsilon}}=\sum
_{J\in\mathcal{C}_{G}}\sum_{I\in\mathcal{C}_{G}^{\mathbf{\tau}%
-\operatorname*{shift}}\text{ and }I\subset_{\rho,\varepsilon}J}\left\langle
T_{\omega}^{\alpha,\ast}\bigtriangleup_{J;\kappa_{2}}^{\omega}g,\bigtriangleup
_{I;\kappa_{1}}^{\sigma}f\right\rangle _{\sigma}\ ,
\]
where we now have $\kappa_{1}\leq\frac{1}{2}\kappa_{2}<\kappa_{2}$, which
prevents us from bounding the stopping form if we were to use the NTV reach.
Instead, for each $G\in\mathcal{G}$, we will add the missing inner products
$\left\langle T_{\omega}^{\ast}\bigtriangleup_{J;\kappa_{2}}^{\omega
}g,\bigtriangleup_{I;\kappa_{1}}^{\sigma}f\right\rangle _{\sigma}$ to
$\mathsf{T}_{\operatorname*{diagonal}}^{\supset_{\mathbf{\rho},\varepsilon}%
,G}\left(  f,g\right)  $ that are needed to result in the \emph{decoupled
parallel} form,%
\begin{align*}
\mathsf{T}_{\operatorname*{diagonal}}^{\supset_{\mathbf{\rho},\varepsilon
},\operatorname*{parallel}}\left(  f,g\right)   &  \equiv\sum_{G\in
\mathcal{G}}\sum_{J\in\mathcal{C}_{G}}\sum_{I\in\mathcal{C}_{G}^{\mathbf{\tau
}-\operatorname*{shift}}}\left\langle T_{\omega}^{\alpha,\ast}\bigtriangleup
_{J;\kappa_{2}}^{\omega}g,\bigtriangleup_{I;\kappa_{1}}^{\sigma}f\right\rangle
_{\sigma},\\
&  =\left\langle T_{\omega}^{\alpha,\ast}\sum_{J\in\mathcal{C}_{G}%
}\bigtriangleup_{J;\kappa_{2}}^{\omega}g,\sum_{I\in\mathcal{C}_{G}%
^{\mathbf{\tau}-\operatorname*{shift}}}\bigtriangleup_{I;\kappa_{1}}^{\sigma
}f\right\rangle _{\sigma}=\left\langle T_{\omega}^{\alpha,\ast}\mathsf{P}%
_{\mathcal{C}_{G}}^{\omega}g,\mathsf{P}_{\mathcal{C}_{G}^{\mathbf{\tau
}-\operatorname*{shift}}}^{\sigma}f\right\rangle _{\sigma}.
\end{align*}

However, we compute%
\begin{align*}
&  \mathsf{T}_{\operatorname*{diagonal}}^{\supset_{\mathbf{\rho},\varepsilon
},\operatorname*{parallel}}\left(  f,g\right)  -\mathsf{T}%
_{\operatorname*{diagonal}}^{\supset_{\mathbf{\rho},\varepsilon}}\left(
f,g\right)  =\sum_{G\in\mathcal{G}}\sum_{J\in\mathcal{C}_{G}}\sum
_{I\in\mathcal{C}_{G}^{\mathbf{\tau}-\operatorname*{shift}}\text{ and
}I\not \subset _{\rho,\varepsilon}J}\left\langle T_{\omega}^{\alpha,\ast
}\bigtriangleup_{J;\kappa_{2}}^{\omega}g,\bigtriangleup_{I;\kappa_{1}}%
^{\sigma}f\right\rangle _{\sigma}\\
&  =\sum_{G\in\mathcal{G}}\sum_{J\in\mathcal{C}_{G}}\sum_{\substack{I\in
\mathcal{C}_{G}^{\mathbf{\tau}-\operatorname*{shift}}\\2^{-\rho}\leq\frac
{\ell\left(  J\right)  }{\ell\left(  I\right)  }\leq2^{\rho}\text{ or }J\cap
I=\emptyset}}\left\langle T_{\sigma}^{\alpha}\left(  \bigtriangleup
_{I;\kappa_{1}}^{\sigma}f\right)  ,\bigtriangleup_{J;\kappa_{2}}^{\omega
}g\right\rangle _{\omega}+\sum_{G\in\mathcal{G}}\sum_{J\in\mathcal{C}_{G}}%
\sum_{I\in\mathcal{C}_{G}^{\mathbf{\tau}-\operatorname*{shift}}\text{ and
}J\subset_{\rho,\varepsilon}I}\left\langle T_{\sigma}^{\alpha}\left(
\bigtriangleup_{I;\kappa_{1}}^{\sigma}f\right)  ,\bigtriangleup_{J;\kappa_{2}%
}^{\omega}g\right\rangle _{\omega}\\
&  \equiv\operatorname{Error}\left(  f,g\right)  +\mathsf{T}%
_{\operatorname*{diagonal}}^{\subset_{\mathbf{\rho},\varepsilon}%
,\operatorname*{alt}}\left(  f,g\right)  \ ,
\end{align*}
where the sublinear form%
\[
\left\vert \operatorname{Error}\right\vert \left(  f,g\right)  \equiv
\sum_{G\in\mathcal{G}}\sum_{J\in\mathcal{C}_{G}}\sum_{\substack{I\in
\mathcal{C}_{G}^{\mathbf{\tau}-\operatorname*{shift}}\\2^{-\rho}\leq\frac
{\ell\left(  J\right)  }{\ell\left(  I\right)  }\leq2^{\rho}\text{ or }J\cap
I=\emptyset}}\left\vert \left\langle T_{\sigma}^{\alpha}\left(  \bigtriangleup
_{I;\kappa_{1}}^{\sigma}f\right)  ,\bigtriangleup_{J;\kappa_{2}}^{\omega
}g\right\rangle _{\omega}\right\vert
\]
is controlled by the sublinear \emph{comparable} form $\left\vert
\mathsf{B}_{\diagup}\right\vert \left(  f,g\right)  $ with absolute values
inside, together with the sublinear disjoint form $\left\vert \mathsf{B}%
_{\cap}\right\vert \left(  f,g\right)  $ with absolute values inside.

The \emph{alternate below diagonal} form%
\begin{align*}
\mathsf{T}_{\operatorname*{diagonal}}^{\subset_{\mathbf{\rho},\varepsilon
},\operatorname*{alt}}\left(  f,g\right)   &  \equiv\sum_{G\in\mathcal{G}}%
\sum_{J\in\mathcal{C}_{G}}\sum_{I\in\mathcal{C}_{G}^{\mathbf{\tau
}-\operatorname*{shift}}\text{ and }J\subset_{\rho,\varepsilon}I}\left\langle
T_{\sigma}^{\alpha}\left(  \bigtriangleup_{I;\kappa_{1}}^{\sigma}f\right)
,\bigtriangleup_{J;\kappa_{2}}^{\omega}g\right\rangle _{\omega}\\
&  =\sum_{G\in\mathcal{G}}\sum_{I\in\mathcal{C}_{G}^{\mathbf{\tau
}-\operatorname*{shift}}}\sum_{J\in\mathcal{C}_{G}\text{ and }J\subset
_{\rho,\varepsilon}I}\left\langle T_{\sigma}^{\alpha}\left(  \bigtriangleup
_{I;\kappa_{1}}^{\sigma}f\right)  ,\bigtriangleup_{J;\kappa_{2}}^{\omega
}g\right\rangle _{\omega}\ ,
\end{align*}
was essentially treated in the previous section on the below form by breaking
it up using the NTV reach plus error terms. To see this we write
\[
\mathcal{C}_{F}^{\mathbf{\tau}-\operatorname*{shift}}=\left(  \mathcal{C}%
_{\pi_{\mathcal{G}}F}\cap\mathcal{C}_{F}^{\mathbf{\tau}-\operatorname*{shift}%
}\right)  \bigcup\left(  \bigcup_{\substack{G\in\mathcal{G}:\ \left(
F,G\right)  \in\operatorname{Near}\left(  \mathcal{F}\times\mathcal{G}\right)
\\G\subset F}}\mathcal{C}_{G}\cap\mathcal{C}_{F}^{\mathbf{\tau}%
-\operatorname*{shift}}\bigcup_{G\in\mathcal{G}:\ G\in\bigcup_{F^{\prime}%
\in\mathfrak{C}_{\mathcal{F}}\left(  F\right)  }\mathcal{N}^{\tau}\left(
F^{\prime}\right)  }\mathcal{C}_{G}\cap\mathcal{C}_{F}^{\mathbf{\tau
}-\operatorname*{shift}}\right)
\]
where $\pi_{\mathcal{G}}F$ is the smallest cube $G\in\mathcal{G}$ containing
$F$. Then fixing a pair $\left(  F,J\right)  $ with $F\in\mathcal{F}$ and
$J\in\mathcal{C}_{F}^{\mathbf{\tau}-\operatorname*{shift}}$, and noting Remark
\ref{pd}, the set of cubes $I$ arising in the sum for $\mathsf{T}%
_{\operatorname*{diagonal}}^{\subset_{\mathbf{\rho},\varepsilon}%
,\operatorname*{alt}}\left(  f,g\right)  $ are precisely those $I\in
\mathcal{C}_{\pi_{\mathcal{G}}J}^{\mathbf{\tau}-\operatorname*{shift}}$
satisfying $J\subset_{\rho,\varepsilon}I$, i.e.
\[
\pi_{\mathcal{D}}^{\left(  \rho\right)  }J\subset I\subset\pi_{\mathcal{G}%
}^{\left[  \tau\right]  }J,
\]
where $\pi_{\mathcal{G}}^{\left[  \tau\right]  }J$ is the $\tau$-grandchild of
$\pi_{\mathcal{G}}J$ that contains $J$, or more precisely, is the unique cube
$K$ in $\mathcal{D}$ such that $J\subset K$ and $\pi_{\mathcal{G}}^{\left(
\tau\right)  }K=\pi_{\mathcal{G}}J$. Using Remark \ref{pd} we have%
\begin{align*}
\mathsf{T}_{\operatorname*{diagonal}}^{\subset_{\mathbf{\rho},\varepsilon
},\operatorname*{alt}}\left(  f,g\right)   &  =\sum_{G\in\mathcal{G}}%
\sum_{J\in\mathcal{C}_{G}}\sum_{I\in\mathcal{C}_{G}^{\mathbf{\tau
}-\operatorname*{shift}}\text{ and }J\subset_{\rho,\varepsilon}I}\left\langle
T_{\sigma}^{\alpha}\left(  \bigtriangleup_{I;\kappa_{1}}^{\sigma}f\right)
,\bigtriangleup_{J;\kappa_{2}}^{\omega}g\right\rangle _{\omega}\\
&  =\sum_{F\in\mathcal{F}}\sum_{J\in\mathcal{C}_{F}^{\mathbf{\tau
}-\operatorname*{shift}}}\left\langle T_{\sigma}^{\alpha}\left(  \sum
_{I\in\left[  \pi_{\mathcal{D}}^{\left(  \rho\right)  }J,\pi_{\mathcal{G}%
}^{\left[  \tau\right]  }J\right]  }\bigtriangleup_{I;\kappa_{1}}^{\sigma
}f\right)  ,\bigtriangleup_{J;\kappa_{2}}^{\omega}g\right\rangle _{\omega},
\end{align*}
where if $K\supset L$ then $\sum_{I\in\left[  K,L\right]  }\bigtriangleup
_{I;\kappa_{1}}^{\sigma}f=0$.

The form $\mathsf{T}_{\operatorname*{diagonal}}^{\subset_{\mathbf{\rho
},\varepsilon},\operatorname*{alt}}\left(  f,g\right)  $ matches the form,
\begin{align*}
&  \mathsf{T}_{\operatorname*{diagonal}}^{\subset_{\mathbf{\rho},\varepsilon}%
}\left(  f,g\right)  =\sum_{F\in\mathcal{F}}\left\langle T_{\sigma}^{\alpha
}\left(  \mathsf{P}_{\mathcal{C}_{F}}^{\sigma}f\right)  ,\mathsf{P}%
_{\mathcal{C}_{F}^{\mathbf{\tau}-\operatorname*{shift}}}^{\omega
}g\right\rangle _{\omega}^{\subset_{\mathbf{\rho},\varepsilon}}=\sum
_{F\in\mathcal{F}}\sum_{I\in\mathcal{C}_{F}}\sum_{J\in\mathcal{C}%
_{F}^{\mathbf{\tau}-\operatorname*{shift}}\text{ and }J\subset_{\rho
,\varepsilon}I}\left\langle T_{\sigma}^{\alpha}\left(  \bigtriangleup
_{I;\kappa_{1}}^{\sigma}f\right)  ,\bigtriangleup_{J;\kappa_{2}}^{\omega
}g\right\rangle _{\omega}\\
&  =\sum_{F\in\mathcal{F}}\sum_{J\in\mathcal{C}_{F}^{\mathbf{\tau
}-\operatorname*{shift}}}\left\langle T_{\sigma}^{\alpha}\left(  \sum
_{I\in\mathcal{C}_{F}\text{ and }J\subset_{\rho,\varepsilon}I}\bigtriangleup
_{I;\kappa_{1}}^{\sigma}f\right)  ,\bigtriangleup_{J;\kappa_{2}}^{\omega
}g\right\rangle _{\omega}=\sum_{F\in\mathcal{F}}\sum_{J\in\mathcal{C}%
_{F}^{\mathbf{\tau}-\operatorname*{shift}}}\left\langle T_{\sigma}^{\alpha
}\left(  \sum_{I\in\left[  \pi_{\mathcal{D}}^{\left(  \rho\right)
}J,F\right]  }\bigtriangleup_{I;\kappa_{1}}^{\sigma}f\right)  ,\bigtriangleup
_{J;\kappa_{2}}^{\omega}g\right\rangle _{\omega},
\end{align*}
considered in the previous section,\ with the only exception that the sum in
$I$ stops at $\pi_{\mathcal{G}}^{\left[  \tau\right]  }J$ in $\mathsf{T}%
_{\operatorname*{diagonal}}^{\subset_{\mathbf{\rho},\varepsilon}%
,\operatorname*{alt}}\left(  f,g\right)  $, instead of at $F$ as it does in
$\mathsf{T}_{\operatorname*{diagonal}}^{\subset_{\mathbf{\rho},\varepsilon}%
}\left(  f,g\right)  $.

This suggests we decompose the form $\mathsf{T}_{\operatorname*{diagonal}%
}^{\subset_{\mathbf{\rho},\varepsilon},\operatorname*{alt}}\left(  f,g\right)
$\ as%
\begin{align}
\mathsf{T}_{\operatorname*{diagonal}}^{\subset_{\mathbf{\rho},\varepsilon
},\operatorname*{alt}}\left(  f,g\right)   &  =\sum_{F\in\mathcal{F}}%
\sum_{J\in\mathcal{C}_{F}^{\mathbf{\tau}-\operatorname*{shift}}}\left\langle
T_{\sigma}^{\alpha}\left(  \sum_{I\in\left[  \pi_{\mathcal{D}}^{\left(
\rho\right)  }J,\pi_{\mathcal{G}}^{\left[  \tau\right]  }J\right]
}\bigtriangleup_{I;\kappa_{1}}^{\sigma}f\right)  ,\bigtriangleup_{J;\kappa
_{2}}^{\omega}g\right\rangle _{\omega}\label{suggests}\\
&  =\sum_{F\in\mathcal{F}}\sum_{J\in\mathcal{C}_{F}^{\mathbf{\tau
}-\operatorname*{shift}}}\left\langle T_{\sigma}^{\alpha}\left(  \sum
_{I\in\left[  \pi_{\mathcal{D}}^{\left(  \rho\right)  }J,F\right]  \text{ if
}\pi_{\mathcal{G}}^{\left[  \tau\right]  }J\subset F}\bigtriangleup
_{I;\kappa_{1}}^{\sigma}f\right)  ,\bigtriangleup_{J;\kappa_{2}}^{\omega
}g\right\rangle _{\omega}\nonumber\\
&  -\sum_{F\in\mathcal{F}}\sum_{J\in\mathcal{C}_{F}^{\mathbf{\tau
}-\operatorname*{shift}}}\left\langle T_{\sigma}^{\alpha}\left(
\sum_{\substack{I\in\left[  \pi_{\mathcal{G}}^{\left[  \tau\right]
}J,F\right]  \text{ and }J\subset_{\rho,\varepsilon}I\\\pi_{\mathcal{G}%
}^{\left[  \tau\right]  }J\subset F}}\bigtriangleup_{I;\kappa_{1}}^{\sigma
}f\right)  ,\bigtriangleup_{J;\kappa_{2}}^{\omega}g\right\rangle _{\omega
}\nonumber\\
&  +\sum_{F\in\mathcal{F}}\sum_{J\in\mathcal{C}_{F}^{\mathbf{\tau
}-\operatorname*{shift}}}\left\langle T_{\sigma}^{\alpha}\left(
\sum_{\substack{I\in\left[  F,\pi_{\mathcal{G}}^{\left[  \tau\right]
}J\right]  \text{ and }J\subset_{\rho,\varepsilon}I\\F\subsetneqq
\pi_{\mathcal{G}}^{\left[  \tau\right]  }J}}\bigtriangleup_{I;\kappa_{1}%
}^{\sigma}f\right)  ,\bigtriangleup_{J;\kappa_{2}}^{\omega}g\right\rangle
_{\omega}\nonumber\\
&  \equiv\mathsf{T}_{\operatorname*{diagonal}}^{\subset_{\mathbf{\rho
},\varepsilon}}\left(  f,g\right)  -\mathsf{T}_{\operatorname*{diagonal}%
}^{\subset_{\mathbf{\rho},\varepsilon},\operatorname{diff}%
\operatorname{bottom}}\left(  f,g\right)  +\mathsf{T}%
_{\operatorname*{diagonal}}^{\subset_{\mathbf{\rho},\varepsilon}%
,\operatorname{diff}\operatorname{top}}\left(  f,g\right)  .\nonumber
\end{align}
We now claim that the \emph{ bottom difference} form $\mathsf{T}%
_{\operatorname*{diagonal}}^{\subset_{\mathbf{\rho},\varepsilon}%
,\operatorname{diff}\operatorname{bottom}}\left(  f,g\right)  $ can be treated
using the NTV reach in \emph{the same way} that the form $\mathsf{T}%
_{\operatorname*{diagonal}}^{\subset_{\mathbf{\rho},\varepsilon}}\left(
f,g\right)  $ was treated using the crucial assumption (\ref{double}). Indeed,
by the absolute convergence of the commutator, stopping and neighbor forms
given by (\ref{comm est}), (\ref{stop est}) and (\ref{neigh est}), it suffices
to bound the resulting paraproduct form for $\mathsf{T}%
_{\operatorname*{diagonal}}^{\subset_{\mathbf{\rho},\varepsilon}%
,\operatorname{diff}\operatorname{bottom}}\left(  f,g\right)  $ given by
\[
\sum_{J\in\mathcal{C}_{F}^{\mathbf{\tau}-\operatorname*{shift}}}
\sum_{\substack{I\in\left[  \pi_{\mathcal{G}}^{\left[  \tau\right]
}J,F\right]  \text{ and }J\subset_{\rho,\varepsilon}I\\\pi_{\mathcal{G}%
}^{\left[  \tau\right]  }J\subset F}} \left\langle M_{I_{J} ; \kappa_{1}}
T_{\sigma}^{\alpha} ,\bigtriangleup_{J;\kappa_{2}}^{\omega}g\right\rangle
_{\omega} \, .
\]
Because the sum in $I$ is along a tower of dyadic intervals, then as in
(\ref{eq:telescope_M_to_poly}), we may write
\[
\sum_{\substack{I\in\left[  \pi_{\mathcal{G}}^{\left[  \tau\right]
}J,F\right]  \cap\mathcal{C}_{F} \text{ and }J\subset_{\rho,\varepsilon}%
I\\\pi_{\mathcal{G}}^{\left[  \tau\right]  }J\subset F}} \mathbf{1}_{J}
M_{I^{\prime}; \kappa_{1}} \equiv\mathbf{1}_{J} P_{J ; \kappa_{1}} \ ,
\]
for some polynomial $P_{J ; \kappa_{1}}$ of degree strictly less than
$\kappa_{1}$. Then noting that $\pi_{\mathcal{G}} ^{[\tau]} J$ is either in
$\mathcal{C}_{F}$, or is in the corona associated to an $\mathcal{F}%
$-descendant of $F$ a bounded number of generations below, then one can check
that \eqref{unif bdd} still holds. Then the rest of the proof the paraproduct
term follows verbatim.

We now further decompose the \emph{top difference} form as a sum of a `plug'
form and a `hole' form,%
\begin{align*}
\mathsf{T}_{\operatorname*{diagonal}}^{\subset_{\mathbf{\rho},\varepsilon
},\operatorname{diff}\operatorname{top}}\left(  f,g\right)   &  =\sum
_{F\in\mathcal{F}}\sum_{J\in\mathcal{C}_{F}^{\mathbf{\tau}%
-\operatorname*{shift}}}\left\langle T_{\sigma}^{\alpha}\left(  \mathbf{1}%
_{F}\sum_{\substack{I\in\left[  F,\pi_{\mathcal{G}}^{\left[  \tau\right]
}J\right]  \text{ and }J\subset_{\rho,\varepsilon}I\\F\subsetneqq
\pi_{\mathcal{G}}^{\left[  \tau\right]  }J}}\bigtriangleup_{I;\kappa_{1}%
}^{\sigma}f\right)  ,\bigtriangleup_{J;\kappa_{2}}^{\omega}g\right\rangle
_{\omega}\\
&  +\sum_{F\in\mathcal{F}}\sum_{J\in\mathcal{C}_{F}^{\mathbf{\tau
}-\operatorname*{shift}}}\left\langle T_{\sigma}^{\alpha}\left(
\mathbf{1}_{\mathbb{R}^{n}\setminus F}\sum_{\substack{I\in\left[
F,\pi_{\mathcal{G}}^{\left[  \tau\right]  }J\right]  \text{ and }%
J\subset_{\rho,\varepsilon}I\\F\subsetneqq\pi_{\mathcal{G}}^{\left[
\tau\right]  }J}}\bigtriangleup_{I;\kappa_{1}}^{\sigma}f\right)
,\bigtriangleup_{J;\kappa_{2}}^{\omega}g\right\rangle _{\omega}\\
&  \equiv\mathsf{T}_{\operatorname*{diagonal}}^{\subset_{\mathbf{\rho
},\varepsilon},\operatorname{diff}\operatorname{top}\operatorname{plug}%
}\left(  f,g\right)  +\mathsf{T}_{\operatorname*{diagonal}}^{\subset
_{\mathbf{\rho},\varepsilon},\operatorname{diff}\operatorname{top}%
\operatorname{hole}}\left(  f,g\right)  ,
\end{align*}
where the `hole' form is handled in the same way as the \emph{below stopping
form} was handled above, since the $\omega$-vanishing moments up to order less
than $\kappa_{2}$ of the projection $\bigtriangleup_{J;\kappa_{2}}^{\omega}$
are here applied directly to the kernel of $T_{\sigma}^{\alpha}$. Indeed, we
now use,
\[
\mathrm{P}_{\kappa_{1}}^{\alpha}\left(  J,\mathbf{1}_{\mathbb{R}^{n}\setminus
F}\sigma\right)  \lesssim\left(  \frac{\ell\left(  J\right)  }{\ell\left(
F\right)  }\right)  ^{\varepsilon}\mathrm{P}_{\kappa_{1}}^{\alpha}\left(
F,\mathbf{1}_{\mathbb{R}^{n}\setminus F}\sigma\right)  \lesssim\left(
\frac{\ell\left(  J\right)  }{\ell\left(  F\right)  }\right)  ^{\varepsilon
}\frac{\left\vert F\right\vert _{\sigma}}{\left\vert F\right\vert
^{1-\frac{\alpha}{n}}},
\]
together with the telescoping identity,%
\[
\left\vert \sum_{\substack{I\in\left(  F,\pi_{\mathcal{G}}^{\left[
\tau\right]  }J\right]  \text{ and }J\subset_{\rho,\varepsilon}I\\F\subsetneqq
\pi_{\mathcal{G}}^{\left[  \tau\right]  }J}}\bigtriangleup_{I;\kappa_{1}%
}^{\sigma}f\right\vert =\left\vert \mathbb{E}_{F}^{\sigma}f-\mathbb{E}%
_{\pi_{\mathcal{G}}^{\left[  \tau\right]  }J}^{\sigma}f\right\vert \lesssim
E_{F}^{\sigma}\left\vert f\right\vert ,
\]
to obtain,%
\begin{align*}
&  \left\vert \mathsf{T}_{\operatorname*{diagonal}}^{\subset_{\mathbf{\rho
},\varepsilon},\operatorname{diff}\operatorname{top}\operatorname{hole}%
}\left(  f,g\right)  \right\vert \lesssim\sum_{F\in\mathcal{F}}\sum
_{J\in\mathcal{C}_{F}^{\mathbf{\tau}-\operatorname*{shift}}}\left\vert
\left\langle T_{\sigma}^{\alpha}\left(  \mathbf{1}_{\mathbb{R}^{n}\setminus
F}\sum_{\substack{I\in\left[  F,\pi_{\mathcal{G}}^{\left[  \tau\right]
}J\right]  \text{ and }J\subset_{\rho,\varepsilon}I\\F\subsetneqq
\pi_{\mathcal{G}}^{\left[  \tau\right]  }J}}\bigtriangleup_{I;\kappa_{1}%
}^{\sigma}f\right)  ,\bigtriangleup_{J;\kappa_{2}}^{\omega}g\right\rangle
_{\omega}\right\vert \\
&  \lesssim\sum_{F\in\mathcal{F}}\sum_{J\in\mathcal{C}_{F}^{\mathbf{\tau
}-\operatorname*{shift}}}\mathrm{P}_{\kappa_{1}}^{\alpha}\left(
J,\mathbf{1}_{\mathbb{R}^{n}\setminus F}\left(  E_{F}^{\sigma}\left\vert
f\right\vert \right)  \sigma\right)  \sqrt{\left\vert J\right\vert _{\omega}%
}\left\Vert \bigtriangleup_{J;\kappa_{2}}^{\omega}g\right\Vert _{L^{2}\left(
\omega\right)  }\\
&  \lesssim\sum_{F\in\mathcal{F}}\left(  E_{F}^{\sigma}\left\vert f\right\vert
\right)  \frac{\left\vert F\right\vert _{\sigma}}{\left\vert F\right\vert
^{1-\frac{\alpha}{n}}}\sum_{J\in\mathcal{C}_{F}^{\mathbf{\tau}%
-\operatorname*{shift}}}\left(  \frac{\ell\left(  J\right)  }{\ell\left(
F\right)  }\right)  ^{\kappa_{1}-\varepsilon\left(  \kappa_{1}+n-\alpha
\right)  }\sqrt{\left\vert J\right\vert _{\omega}}\left\Vert \bigtriangleup
_{J;\kappa_{2}}^{\omega}g\right\Vert _{L^{2}\left(  \omega\right)  }\\
&  \lesssim\sum_{F\in\mathcal{F}}\left(  E_{F}^{\sigma}\left\vert f\right\vert
\right)  \frac{\left\vert F\right\vert _{\sigma}}{\left\vert F\right\vert
^{1-\frac{\alpha}{n}}}\sqrt{\sum_{J\in\mathcal{C}_{F}^{\mathbf{\tau
}-\operatorname*{shift}}}\left(  \frac{\ell\left(  J\right)  }{\ell\left(
F\right)  }\right)  ^{2\left[  \kappa_{1}-\varepsilon\left(  \kappa
_{1}+n-\alpha\right)  \right]  }\left\vert J\right\vert _{\omega}}\sqrt
{\sum_{J\in\mathcal{C}_{F}^{\mathbf{\tau}-\operatorname*{shift}}}\left\Vert
\bigtriangleup_{J;\kappa_{2}}^{\omega}g\right\Vert _{L^{2}\left(
\omega\right)  }^{2}}\\
&  \lesssim\sum_{F\in\mathcal{F}}\left(  E_{F}^{\sigma}\left\vert f\right\vert
\right)  \frac{\left\vert F\right\vert _{\sigma}}{\left\vert F\right\vert
^{1-\frac{\alpha}{n}}}\sqrt{\left\vert F\right\vert _{\omega}}\left\Vert
\mathsf{P}_{\mathcal{C}_{F}^{\mathbf{\tau}-\operatorname*{shift}}}g\right\Vert
_{L^{2}\left(  \omega\right)  }\text{ by pigeonholing }\frac{\ell\left(
J\right)  }{\ell\left(  F\right)  }\text{ and using }\kappa_{1}>\varepsilon
\left(  \kappa_{1}+n-\alpha\right) \\
&  \lesssim\sqrt{A_{2}^{\alpha}\left(  \sigma,\omega\right)  }\sum
_{F\in\mathcal{F}}\left(  E_{F}^{\sigma}\left\vert f\right\vert \right)
\sqrt{\left\vert F\right\vert _{\sigma}}\left\Vert \mathsf{P}_{\mathcal{C}%
_{F}^{\mathbf{\tau}-\operatorname*{shift}}}g\right\Vert _{L^{2}\left(
\omega\right)  }\lesssim\sqrt{A_{2}^{\alpha}\left(  \sigma,\omega\right)
}\left\Vert f\right\Vert _{L^{2}\left(  \sigma\right)  }\left\Vert
g\right\Vert _{L^{2}\left(  \omega\right)  }.
\end{align*}

So we finally turn to analyzing the `plug form' above. We again use the
telescoping identity
\[
\mathbf{1}_{F}\sum_{I\in\left[  F,\pi_{\mathcal{G}}^{\left[  \tau\right]
}J\right]  }\bigtriangleup_{I;\kappa_{1}}^{\sigma}f=\mathbb{E}_{F;\kappa_{1}%
}^{\sigma}f-\mathbf{1}_{F}\mathbb{E}_{\pi_{\mathcal{G}}^{\left[  \tau\right]
}J;\kappa_{1}}^{\sigma}f,\ \ \ \ \ \text{for }F\subsetneqq\pi_{\mathcal{G}%
}^{\left[  \tau\right]  }J
\]
to write the \emph{top difference plug} form as,%
\begin{align*}
\mathsf{T}_{\operatorname*{diagonal}}^{\subset_{\mathbf{\rho},\varepsilon
},\operatorname{diff}\operatorname{top}\operatorname{plug}}\left(  f,g\right)
&  =\sum_{F\in\mathcal{F}}\sum_{J\in\mathcal{C}_{F}^{\mathbf{\tau
}-\operatorname*{shift}}}\left\langle T_{\sigma}^{\alpha}\left(
\mathbf{1}_{F}\sum_{I\in\left[  F,\pi_{\mathcal{G}}^{\left[  \tau\right]
}J\right]  \text{ if }F\subsetneqq\pi_{\mathcal{G}}^{\left[  \tau\right]  }%
J}\bigtriangleup_{I;\kappa_{1}}^{\sigma}f\right)  ,\bigtriangleup
_{J;\kappa_{2}}^{\omega}g\right\rangle _{\omega}\\
&  =\sum_{F\in\mathcal{F}}\sum_{J\in\mathcal{C}_{F}^{\mathbf{\tau
}-\operatorname*{shift}}:\ F\subsetneqq\pi_{\mathcal{G}}^{\left[  \tau\right]
}J}\left\langle T_{\sigma}^{\alpha}\left(  \mathbb{E}_{F;\kappa_{1}}^{\sigma
}f\right)  ,\bigtriangleup_{J;\kappa_{2}}^{\omega}g\right\rangle _{\omega}\\
&  -\sum_{F\in\mathcal{F}}\sum_{J\in\mathcal{C}_{F}^{\mathbf{\tau
}-\operatorname*{shift}}:\ F\subsetneqq\pi_{\mathcal{G}}^{\left[  \tau\right]
}J}\left\langle T_{\sigma}^{\alpha}\left(  \mathbf{1}_{F}\mathbb{E}%
_{\pi_{\mathcal{G}}^{\left[  \tau\right]  }J;\kappa_{1}}^{\sigma}f\right)
,\bigtriangleup_{J;\kappa_{2}}^{\omega}g\right\rangle _{\omega}\\
&  \equiv\mathsf{T}_{\operatorname*{diagonal}}^{\subset_{\mathbf{\rho
},\varepsilon},\operatorname{diff}\operatorname{top}\operatorname{plug}%
;1}\left(  f,g\right)  -\mathsf{T}_{\operatorname*{diagonal}}^{\subset
_{\mathbf{\rho},\varepsilon},\operatorname{diff}\operatorname{top}%
\operatorname{plug};2}\left(  f,g\right)  .
\end{align*}
The first form above is controlled by the local testing condition,%
\begin{align*}
&  \left\vert \mathsf{T}_{\operatorname*{diagonal}}^{\subset_{\mathbf{\rho
},\varepsilon},\operatorname{diff}\operatorname{top}\operatorname{plug}%
;1}\left(  f,g\right)  \right\vert =\left\vert \sum_{F\in\mathcal{F}%
}\left\langle T_{\sigma}^{\alpha}\left(  \mathbb{E}_{F;\kappa_{1}}^{\sigma
}f\right)  ,\sum_{J\in\mathcal{C}_{F}^{\mathbf{\tau}-\operatorname*{shift}%
}:\ F\subsetneqq\pi_{\mathcal{G}}^{\left[  \tau\right]  }J}\bigtriangleup
_{J;\kappa_{2}}^{\omega}g\right\rangle _{\omega}\right\vert \\
&  \leq\sum_{F\in\mathcal{F}}\mathfrak{T}_{T^{\alpha}}^{\kappa_{1}}\left(
\sigma,\omega\right)  \left\Vert \mathbb{E}_{F;\kappa_{1}}^{\sigma
}f\right\Vert _{L^{2}\left(  \sigma\right)  }\left\Vert \sum_{J\in
\mathcal{C}_{F}^{\mathbf{\tau}-\operatorname*{shift}}:\ F\subsetneqq
\pi_{\mathcal{G}}^{\left[  \tau\right]  }J}\bigtriangleup_{J;\kappa_{2}%
}^{\omega}g\right\Vert _{L^{2}\left(  \omega\right)  }\\
&  \leq\sum_{F\in\mathcal{F}}\mathfrak{T}_{T^{\alpha}}^{\kappa_{1}}\left(
\sigma,\omega\right)  \left(  E_{F}^{\sigma}\left\vert f\right\vert \right)
\sqrt{\left\vert F\right\vert _{\sigma}}\left\Vert P_{\mathcal{C}%
_{F}^{\mathbf{\tau}-\operatorname*{shift}}}^{\omega}g\right\Vert
_{L^{2}\left(  \omega\right)  }\\
&  \leq\mathfrak{T}_{T^{\alpha}}^{\kappa_{1}}\left(  \sigma,\omega\right)
\sqrt{\sum_{F\in\mathcal{F}}\left(  E_{F}^{\sigma}\left\vert f\right\vert
\right)  ^{2}\left\vert F\right\vert _{\sigma}}\sqrt{\sum_{F\in\mathcal{F}%
}\left\Vert P_{\mathcal{C}_{F}^{\mathbf{\tau}-\operatorname*{shift}}}^{\omega
}g\right\Vert _{L^{2}\left(  \omega\right)  }^{2}}\lesssim\mathfrak{T}%
_{T^{\alpha}}^{\kappa_{1}}\left(  \sigma,\omega\right)  \left\Vert
f\right\Vert _{L^{2}\left(  \sigma\right)  }\left\Vert g\right\Vert
_{L^{2}\left(  \omega\right)  }.
\end{align*}

To control the second form above we note that if $F\subsetneqq\pi
_{\mathcal{G}}^{\left[  \tau\right]  }J$, then we must have $\pi_{\mathcal{G}%
}J=\pi_{\mathcal{G}}F$ and so,%
\begin{align*}
\mathsf{T}_{\operatorname*{diagonal}}^{\subset_{\mathbf{\rho},\varepsilon
},\operatorname{diff}\operatorname{top};2}\left(  f,g\right)   &  =\sum
_{F\in\mathcal{F}}\sum_{J\in\mathcal{C}_{F}^{\mathbf{\tau}%
-\operatorname*{shift}}:\ F\subsetneqq\pi_{\mathcal{G}}^{\left[  \tau\right]
}J}\left\langle T_{\sigma}^{\alpha}\left(  \mathbf{1}_{F}\mathbb{E}%
_{\pi_{\mathcal{G}}^{\left[  \tau\right]  }J;\kappa_{1}}^{\sigma}f\right)
,\bigtriangleup_{J;\kappa_{2}}^{\omega}g\right\rangle _{\omega}\\
&  =\sum_{F\in\mathcal{F}}\sum_{J\in\mathcal{C}_{F}^{\mathbf{\tau
}-\operatorname*{shift}}:\ \pi_{\mathcal{G}}J=\pi_{\mathcal{G}}F\text{ and
}F\subsetneqq\pi_{\mathcal{G}}^{\left[  \tau\right]  }J}\left\langle
T_{\sigma}^{\alpha}\left(  \mathbf{1}_{F}\mathbb{E}_{\pi_{\mathcal{G}%
}^{\left[  \tau\right]  }F;\kappa_{1}}^{\sigma}f\right)  ,\bigtriangleup
_{J;\kappa_{2}}^{\omega}g\right\rangle _{\omega}\\
&  =\sum_{F\in\mathcal{F}}\sum_{J\in\mathcal{C}_{F}^{\mathbf{\tau
}-\operatorname*{shift}}:\ \pi_{\mathcal{G}}J=\pi_{\mathcal{G}}F\text{ and
}F\subsetneqq\pi_{\mathcal{G}}^{\left[  \tau\right]  }J}\left\langle
T_{\sigma}^{\alpha}\left(  \mathbf{1}_{F}\mathbb{E}_{\pi_{\mathcal{G}%
}^{\left[  \tau\right]  }F;\kappa_{1}}^{\sigma}f\right)  ,\bigtriangleup
_{J;\kappa_{2}}^{\omega}g\right\rangle _{\omega}\\
&  =\sum_{F\in\mathcal{F}}\left\langle T_{\sigma}^{\alpha}\left(
\mathbf{1}_{F}\mathbb{E}_{\pi_{\mathcal{G}}^{\left[  \tau\right]  }%
F;\kappa_{1}}^{\sigma}f\right)  ,\sum_{J\in\mathcal{C}_{F}^{\mathbf{\tau
}-\operatorname*{shift}}:\ \pi_{\mathcal{G}}J=\pi_{\mathcal{G}}F\text{ and
}F\subsetneqq\pi_{\mathcal{G}}^{\left[  \tau\right]  }J}\bigtriangleup
_{J;\kappa_{2}}^{\omega}g\right\rangle _{\omega}%
\end{align*}
whose modulus is at most,%
\begin{align*}
&  \sum_{F\in\mathcal{F}}\left\vert \left\langle T_{\sigma}^{\alpha}\left(
\mathbf{1}_{F}\mathbb{E}_{\pi_{\mathcal{G}}^{\left[  \tau\right]  }%
F;\kappa_{1}}^{\sigma}f\right)  ,\sum_{J\in\mathcal{C}_{F}^{\mathbf{\tau
}-\operatorname*{shift}}:\ \pi_{\mathcal{G}}J=\pi_{\mathcal{G}}F\text{ and
}F\subsetneqq\pi_{\mathcal{G}}^{\left[  \tau\right]  }J}\bigtriangleup
_{J;\kappa_{2}}^{\omega}g\right\rangle _{\omega}\right\vert \\
&  =\sum_{F\in\mathcal{F}}\left\Vert \mathbf{1}_{F}\mathbb{E}_{\pi
_{\mathcal{G}}^{\left[  \tau\right]  }F;\kappa_{1}}^{\sigma}f\right\Vert
_{\infty}\left\vert \left\langle T_{\sigma}^{\alpha}\left(  \mathbf{1}%
_{F}\frac{\mathbb{E}_{\pi_{\mathcal{G}}^{\left[  \tau\right]  }F;\kappa_{1}%
}^{\sigma}f}{\left\Vert \mathbf{1}_{F}\mathbb{E}_{\pi_{\mathcal{G}}^{\left[
\tau\right]  }F;\kappa_{1}}^{\sigma}f\right\Vert _{\infty}}\right)
,\sum_{J\in\mathcal{C}_{F}^{\mathbf{\tau}-\operatorname*{shift}}%
:\ \pi_{\mathcal{G}}J=\pi_{\mathcal{G}}F\text{ and }F\subsetneqq
\pi_{\mathcal{G}}^{\left[  \tau\right]  }J}\bigtriangleup_{J;\kappa_{2}%
}^{\omega}g\right\rangle _{\omega}\right\vert \\
&  \lesssim\mathfrak{T}_{T^{\alpha}}^{\kappa_{1}}\left(  \sigma,\omega\right)
\sum_{F\in\mathcal{F}}\left\Vert \mathbf{1}_{F}\mathbb{E}_{\pi_{\mathcal{G}%
}^{\left[  \tau\right]  }F;\kappa_{1}}^{\sigma}f\right\Vert _{\infty
}\left\Vert \mathbf{1}_{F}\right\Vert _{L^{2}\left(  \sigma\right)
}\left\Vert \sum_{J\in\mathcal{C}_{F}^{\mathbf{\tau}-\operatorname*{shift}%
}:\ \pi_{\mathcal{G}}J=\pi_{\mathcal{G}}F\text{ and }F\subsetneqq
\pi_{\mathcal{G}}^{\left[  \tau\right]  }J}\bigtriangleup_{J;\kappa_{2}%
}^{\omega}g\right\Vert _{L^{2}\left(  \omega\right)  },
\end{align*}
and we can now continue with Cauchy-Schwarz in $F^{\prime}$, noting that the
projections
\[
\left\{  \sum_{F\in\mathcal{F}}\sum_{J\in\mathcal{C}_{F}^{\mathbf{\tau
}-\operatorname*{shift}}:\ \pi_{\mathcal{G}}J=\pi_{\mathcal{G}}F\text{ and
}F\subsetneqq\pi_{\mathcal{G}}^{\left[  \tau\right]  }J}\bigtriangleup
_{J;\kappa_{2}}^{\omega}\right\}  _{F^{\prime}\in\mathcal{F}}%
\]
have pairwise disjoint Alpert supports, and that
\[
\left\Vert \mathbf{1}_{F}\mathbb{E}_{\pi_{\mathcal{G}}^{\left[  \tau\right]
}F;\kappa_{1}}^{\sigma}f\right\Vert _{\infty}\left\Vert \mathbf{1}%
_{F}\right\Vert _{L^{2}\left(  \sigma\right)  }\lesssim\left(  E_{F}^{\sigma
}\left\vert f\right\vert \right)  \sqrt{\left\vert F\right\vert _{\sigma}},
\]
by the construction of the Calder\'{o}n-Zygmund corona in which averages at
the tops of coronas increase. Thus we obtain that%
\[
\left\vert \mathsf{T}_{\operatorname*{diagonal}}^{\subset_{\mathbf{\rho
},\varepsilon},\operatorname{diff}\operatorname{top};2}\left(  f,g\right)
\right\vert \lesssim\mathfrak{T}_{T^{\alpha}}^{\kappa_{1}}\left(
\sigma,\omega\right)  \left\Vert f\right\Vert _{L^{2}\left(  \sigma\right)
}\left\Vert g\right\Vert _{L^{2}\left(  \omega\right)  }.
\]

Finally then we conclude that the \emph{alternate below diagonal} form
$\mathsf{T}_{\operatorname*{diagonal}}^{\subset_{\mathbf{\rho},\varepsilon
},\operatorname*{alt}}\left(  f,g\right)  $ is controlled by
\[
\left\vert \mathsf{T}_{\operatorname*{diagonal}}^{\subset_{\mathbf{\rho
},\varepsilon},\operatorname*{alt}}\left(  f,g\right)  \right\vert
\lesssim\left(  \mathfrak{T}_{T^{\alpha}}^{\kappa_{1}}+\mathfrak{T}%
_{T^{\alpha,\ast}}^{\kappa_{2}}+\mathcal{WBP}_{T^{\alpha}}^{\left(  \kappa
_{1},\kappa_{2}\right)  }+\sqrt{A_{2}^{\alpha}}\right)  \left\Vert
f\right\Vert _{L^{2}\left(  \sigma\right)  }\left\Vert g\right\Vert
_{L^{2}\left(  \omega\right)  }.
\]

\subsubsection{The parallel form}

Thus it remains to bound the decoupled \emph{parallel} form $\mathsf{T}%
_{\operatorname*{diagonal}}^{\supset_{\mathbf{\rho},\varepsilon}%
,\operatorname*{parallel},G}\left(  f,g\right)  $, which we do here using the
dual indicator / cube testing condition, defined by%
\[
\mathfrak{T}_{T^{\ast}}^{\operatorname*{ind}}\left(  \omega,\sigma\right)
\equiv\sup_{Q}\left(  \frac{1}{\left\vert Q\right\vert _{\sigma}}%
\sup_{E\subset Q}\int_{Q}\left\vert T_{\omega}^{\ast}\left(  \chi_{E}\right)
\right\vert ^{p}\omega\right)  ^{\frac{1}{p}}.
\]
For this we recall the following elementary observation from \cite{LaSaUr1}.
For any linear operators $T$, which we may assume have real-valued kernel by
Remark \ref{real}, we have with $g_{h,Q}=\frac{T^{\ast}\left(  \chi_{Q}%
h\omega\right)  }{\left\vert T^{\ast}\left(  \chi_{Q}h\omega\right)
\right\vert }$,%
\begin{align}
&  \sup_{\left\vert g\right\vert \leq1}\frac{1}{\left\vert Q\right\vert
_{\sigma}}\int_{Q}\left\vert T\left(  \chi_{Q}g\sigma\right)  \right\vert
^{p}\omega=\sup_{\left\vert g\right\vert \leq1}\sup_{\left\Vert h\right\Vert
_{L^{p^{\prime}}\left(  \omega\right)  }\leq1}\left\vert \frac{1}{\left\vert
Q\right\vert _{\sigma}}\int_{Q}T\left(  \chi_{Q}g\sigma\right)  h\omega
\right\vert \label{elem}\\
&  =\sup_{\left\Vert h\right\Vert _{L^{p^{\prime}}\left(  \omega\right)  }%
\leq1}\sup_{\left\vert g\right\vert \leq1}\left\vert \frac{1}{\left\vert
Q\right\vert _{\sigma}}\int_{Q}T^{\ast}\left(  \chi_{Q}h\omega\right)
g\sigma\right\vert =\sup_{\left\Vert h\right\Vert _{L^{p^{\prime}}\left(
\omega\right)  }\leq1}\left\vert \frac{1}{\left\vert Q\right\vert _{\sigma}%
}\int_{Q}T^{\ast}\left(  \chi_{Q}h\omega\right)  g_{h,Q}\sigma\right\vert
\nonumber\\
&  =\sup_{\left\Vert h\right\Vert _{L^{p^{\prime}}\left(  \omega\right)  }%
\leq1}\left\vert \frac{1}{\left\vert Q\right\vert _{\sigma}}\int_{Q}T\left(
\chi_{Q}g_{h,Q}\sigma\right)  h\omega\right\vert \leq\sup_{\left\Vert
h\right\Vert _{L^{p^{\prime}}\left(  \omega\right)  }\leq1}\frac{1}{\left\vert
Q\right\vert _{\sigma}}\int_{Q}\left\vert T\left(  \chi_{Q}g_{h,Q}%
\sigma\right)  \right\vert ^{p}\omega\leq2^{p}\mathfrak{T}_{T}%
^{\operatorname*{ind}}\left(  \sigma,\omega\right)  ^{p},\nonumber
\end{align}
upon noting that both of the functions $g$ and $h$ appearing in the suprema
are real-valued, and hence we can write $g_{h,Q}=\mathbf{1}_{A}-\mathbf{1}%
_{B}$.

Then we have,%
\begin{align*}
&  \left\vert \mathsf{T}_{\operatorname*{diagonal}}^{\supset_{\mathbf{\rho
},\varepsilon},\operatorname*{parallel},G}\left(  f,g\right)  \right\vert
=\left\vert \left\langle T_{\omega}^{\alpha,\ast}\mathsf{P}_{\mathcal{C}_{G}%
}^{\omega}g,\mathsf{P}_{\mathcal{C}_{G}^{\mathbf{\tau}-\operatorname*{shift}}%
}^{\sigma}f\right\rangle _{\sigma}\right\vert \leq\left\Vert T_{\omega
}^{\alpha,\ast}\mathsf{P}_{\mathcal{C}_{G}}^{\omega}g\right\Vert
_{L^{2}\left(  \sigma\right)  }\left\Vert \mathsf{P}_{\mathcal{C}%
_{G}^{\mathbf{\tau}-\operatorname*{shift}}}^{\sigma}f\right\Vert
_{L^{2}\left(  \sigma\right)  }\\
&  \leq2\mathfrak{T}_{T^{\alpha,\ast}}^{\operatorname*{ind}}\left(
\omega,\sigma\right)  \left\Vert \mathsf{P}_{\mathcal{C}_{G}}^{\omega
}g\right\Vert _{L^{\infty}}\sqrt{\left\vert G\right\vert _{\omega}}\left\Vert
\mathsf{P}_{\mathcal{C}_{G}^{\mathbf{\tau}-\operatorname*{shift}}}^{\sigma
}f\right\Vert _{L^{2}\left(  \sigma\right)  }\leq2\mathfrak{T}_{T^{\alpha
,\ast}}^{\operatorname*{ind}}\left(  \omega,\sigma\right)  \left(
E_{G}^{\omega}\left\vert g\right\vert \right)  \sqrt{\left\vert G\right\vert
_{\omega}}\left\Vert \mathsf{P}_{\mathcal{C}_{G}^{\mathbf{\tau}%
-\operatorname*{shift}}}^{\sigma}f\right\Vert _{L^{2}\left(  \sigma\right)  },
\end{align*}
and collecting all of the estimates above, we conclude that%
\begin{align}
\left\vert \mathsf{T}_{\operatorname*{diagonal}}^{\supset_{\mathbf{\rho
},\varepsilon}}\left(  f,g\right)  \right\vert  &  \leq\sum_{G\in\mathcal{G}%
}\left\vert \mathsf{T}_{\operatorname*{diagonal}}^{\supset_{\mathbf{\rho
},\varepsilon},G}\left(  f,g\right)  \right\vert \label{followed by}\\
&  \lesssim\left(  \mathfrak{T}_{T^{\alpha}}^{\kappa_{1}}+\mathfrak{T}%
_{T^{\alpha,\ast}}^{\kappa_{2}}+\mathcal{WBP}_{T^{\alpha}}^{\left(  \kappa
_{1},\kappa_{2}\right)  }\left(  \sigma,\omega\right)  +\sqrt{A_{2}^{\alpha}%
}\right)  \left\Vert f\right\Vert _{L^{2}\left(  \sigma\right)  }\left\Vert
g\right\Vert _{L^{2}\left(  \omega\right)  }\nonumber\\
&  +\mathfrak{T}_{T^{\ast}}^{\operatorname*{ind}}\left(  \omega,\sigma\right)
\sqrt{\sum_{G\in\mathcal{G}}\left(  E_{G}^{\omega}\left\vert g\right\vert
\right)  ^{2}\left\vert G\right\vert _{\omega}}\sqrt{\sum_{G\in\mathcal{G}%
}\left\Vert \mathsf{P}_{\mathcal{C}_{G}^{\mathbf{\tau}-\operatorname*{shift}}%
}^{\sigma}f\right\Vert _{L^{2}\left(  \sigma\right)  }^{2}}\nonumber\\
&  \lesssim\left(  \mathfrak{T}_{T^{\alpha}}^{\kappa_{1}}+\mathfrak{T}%
_{T^{\alpha,\ast}}^{\kappa_{2}}+\mathcal{WBP}_{T^{\alpha}}^{\left(  \kappa
_{1},\kappa_{2}\right)  }+\sqrt{A_{2}^{\alpha}}+\mathfrak{T}_{T^{\alpha,\ast}%
}^{\operatorname*{ind}}\left(  \omega,\sigma\right)  \right)  \left\Vert
f\right\Vert _{L^{2}\left(  \sigma\right)  }\left\Vert g\right\Vert
_{L^{2}\left(  \omega\right)  }\ ,\nonumber
\end{align}
since the collection of Alpert projections $\left\{  \mathsf{P}_{\mathcal{C}%
_{G}^{\mathbf{\tau}-\operatorname*{shift}}}^{\sigma}\right\}  _{G\in
\mathcal{G}}$ have pairwise disjoint Alpert support.

\section{Conclusion of the proofs}

Collecting all the estimates proved above, namely (\ref{routine}),
(\ref{top control}), (\ref{second far below}), (\ref{first far below}),
(\ref{unif bound'}), (\ref{diag est}), (\ref{para est}), (\ref{comm est}),
(\ref{stop est}), and (\ref{neigh est}), and the relevant corresponding dual
estimates including (\ref{followed by}) for the dual diagonal form, we obtain
that for any dyadic grid $\mathcal{D}$, and any admissible truncation of
$T^{\alpha}$,%

\begin{align*}
\left\vert \left\langle T_{\sigma}^{\alpha}\mathsf{P}_{\operatorname{good}%
}^{\mathcal{D}}f,\mathsf{P}_{\operatorname{good}}^{\mathcal{D}}g\right\rangle
_{\omega}\right\vert  &  \leq C\left(  \mathfrak{TR}_{T^{\alpha}}^{\left(
\kappa\right)  }\left(  \sigma,\omega\right)  +\mathfrak{TR}_{T^{\alpha,\ast}%
}^{\left(  \kappa\right)  }\left(  \omega,\sigma\right)  +\sqrt{A_{2}^{\alpha
}\left(  \sigma,\omega\right)  }+\mathfrak{T}_{T^{\alpha}}%
^{\operatorname*{ind}}\left(  \sigma,\omega\right)  +\varepsilon
_{3}\mathfrak{N}_{T^{\alpha}}\left(  \sigma,\omega\right)  \right) \\
&  \times\left\Vert \mathsf{P}_{\operatorname{good}}^{\mathcal{D}}f\right\Vert
_{L^{2}\left(  \sigma\right)  }\left\Vert \mathsf{P}_{\operatorname{good}%
}^{\mathcal{D}}g\right\Vert _{L^{2}\left(  \omega\right)  }.
\end{align*}
Thus for any admissible truncation of $T^{\alpha}$ we obtain from
\cite[Sections 7.4 - 7.7]{NTV3}, followed by the previous display,
\begin{align}
\mathfrak{N}_{T^{\alpha}}\left(  \sigma,\omega\right)   &  \leq C\sup
_{\mathcal{D}}\sup_{f\in L^{2}\left(  \sigma\right)  \text{ and }g\in
L^{2}\left(  \omega\right)  }\frac{\left\vert \left\langle T_{\sigma}^{\alpha
}\mathsf{P}_{\operatorname{good}}^{\mathcal{D}}f,\mathsf{P}%
_{\operatorname{good}}^{\mathcal{D}}g\right\rangle _{\omega}\right\vert
}{\left\Vert \mathsf{P}_{\operatorname{good}}^{\mathcal{D}}f\right\Vert
_{L^{2}\left(  \sigma\right)  }\left\Vert \mathsf{P}_{\operatorname{good}%
}^{\mathcal{D}}g\right\Vert _{L^{2}\left(  \omega\right)  }}\label{absorb}\\
&  \leq C\left(  \mathfrak{TR}_{T^{\alpha}}^{\left(  \kappa\right)  }\left(
\sigma,\omega\right)  +\mathfrak{TR}_{T^{\alpha,\ast}}^{\left(  \kappa\right)
}\left(  \omega,\sigma\right)  +\sqrt{A_{2}^{\alpha}\left(  \sigma
,\omega\right)  }+\mathfrak{T}_{T^{\alpha}}^{\operatorname*{ind}}\left(
\sigma,\omega\right)  \right)  +C\varepsilon_{3}\mathfrak{N}_{T^{\alpha}%
}\left(  \sigma,\omega\right)  .\nonumber
\end{align}
Our next task is to use the doubling hypothesis to replace the triple $\kappa
$-testing constants by the usual cube testing constants, and we start with a lemma.

\begin{lemma}
Let $m\geq1$. Suppose that $\sigma$ and $\omega$ are locally finite positive
Borel measures on $\mathbb{R}^{n}$, with $\sigma$ doubling. If $T^{\alpha}$ is
a bounded operator from $L^{2}\left(  \sigma\right)  $ to $L^{2}\left(
\omega\right)  $, then\ for every grandchild $I^{\prime}\in\mathfrak{C}%
_{\mathcal{D}}^{\left(  m\right)  }\left(  I\right)  $, and each
$0<\varepsilon_{1}<1$, there is a positive constant $C_{m,\varepsilon_{1}}$
such that%
\[
\sqrt{\int_{3I\setminus I^{\prime}}\left\vert T_{\sigma}^{\alpha}%
\mathbf{1}_{I^{\prime}}\right\vert ^{2}d\omega}\leq\left\{  C_{m,\varepsilon
_{1}}\sqrt{A_{2}^{\alpha}\left(  \sigma,\omega\right)  }+\varepsilon
_{1}\mathfrak{N}_{T^{\alpha}}\left(  \sigma,\omega\right)  \right\}
\sqrt{\left\vert I^{\prime}\right\vert _{\sigma}}\ .
\]

\end{lemma}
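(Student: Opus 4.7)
The plan is to split $\mathbf{1}_{I'}$ into a concentric bulk part and a thin-shell boundary part, treating the first with kernel estimates plus $A_{2}^{\alpha}$ and the second with the operator norm. Concretely, for a parameter $\eta=\eta(\varepsilon_{1})\in(0,\tfrac{1}{2})$ to be chosen, I would write
\[
\mathbf{1}_{I'}=\mathbf{1}_{(1-\eta)I'}+\mathbf{1}_{I'\setminus(1-\eta)I'},
\]
and estimate the two resulting pieces of $T_{\sigma}^{\alpha}\mathbf{1}_{I'}$ on $3I\setminus I'$ separately.

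For the bulk piece, every $x\in 3I\setminus I'$ is at distance at least $\tfrac{\eta}{2}\ell(I')$ from every $y\in(1-\eta)I'$, so the size condition $|K^{\alpha}(x,y)|\leq C_{CZ}|x-y|^{\alpha-n}$ yields the pointwise bound $|T_{\sigma}^{\alpha}\mathbf{1}_{(1-\eta)I'}(x)|\lesssim \eta^{-(n-\alpha)}\,|I'|_{\sigma}/\ell(I')^{n-\alpha}$. Squaring, integrating against $\omega$ on $3I$, and invoking the classical Muckenhoupt condition on the cube $3I$ in the form $|3I|_{\omega}\leq A_{2}^{\alpha}\ell(3I)^{2(n-\alpha)}/|I'|_{\sigma}$ (obtained from $|I'|_{\sigma}\leq|3I|_{\sigma}$) gives
\[
\Bigl(\int_{3I\setminus I'}\bigl|T_{\sigma}^{\alpha}\mathbf{1}_{(1-\eta)I'}\bigr|^{2}\,d\omega\Bigr)^{1/2}\lesssim \frac{2^{m(n-\alpha)}}{\eta^{\,n-\alpha}}\,\sqrt{A_{2}^{\alpha}(\sigma,\omega)}\,\sqrt{|I'|_{\sigma}},
\]
the factor $2^{m(n-\alpha)}$ appearing from $\ell(3I)/\ell(I')=3\cdot 2^{m}$. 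For the thin-shell piece I would use only the hypothesized boundedness $T_{\sigma}^{\alpha}:L^{2}(\sigma)\to L^{2}(\omega)$,
\[
\|T_{\sigma}^{\alpha}\mathbf{1}_{I'\setminus(1-\eta)I'}\|_{L^{2}(\omega)}\leq\mathfrak{N}_{T^{\alpha}}\,\sqrt{|I'\setminus(1-\eta)I'|_{\sigma}},
\]
and then select $\eta$ small enough that $|I'\setminus(1-\eta)I'|_{\sigma}\leq\varepsilon_{1}^{2}|I'|_{\sigma}$ \emph{uniformly in} $I'$. Adding the two contributions by the triangle inequality produces the lemma with $C_{m,\varepsilon_{1}}:=C_{m,\eta(\varepsilon_{1})}$.

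The main obstacle is precisely the uniform collar estimate $\sup_{Q\in\mathcal{P}^{n}}|Q\setminus(1-\eta)Q|_{\sigma}/|Q|_{\sigma}\to 0$ as $\eta\to 0^{+}$ for a doubling measure $\sigma$. While $|\partial Q|_{\sigma}=0$ for each individual cube, uniformity in $Q$ is nontrivial (weaker than an $A_{\infty}$ condition but stronger than bare doubling). I would extract it either from the reverse-doubling inequality recalled in the Preliminaries---applied after dyadically decomposing the shell $Q\setminus(1-\eta)Q$ into subcubes lying inside a concentric shrink of $Q$ so that reverse doubling applies with a sufficiently small rescaling parameter---or by a weak-$\ast$ compactness argument on the family of doubling measures with fixed doubling constant, whose limits inherit doubling and must charge no hyperplane. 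All remaining steps of the proposal are routine kernel calculations plus Cauchy--Schwarz and the $A_{2}^{\alpha}$ hypothesis.
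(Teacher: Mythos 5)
Your decomposition and both estimates coincide with the paper's proof. You split $\mathbf{1}_{I'}=\mathbf{1}_{(1-\delta)I'}+\mathbf{1}_{I'\setminus(1-\delta)I'}$, bound the bulk term pointwise by the kernel size condition and then use $A_{2}^{\alpha}$ over $3I$, and bound the collar term by $\mathfrak{N}_{T^{\alpha}}\sqrt{\left\vert I'\setminus(1-\delta)I'\right\vert _{\sigma}}$; this is exactly what the paper does, and your bulk estimate correctly produces $\sqrt{A_{2}^{\alpha}}$, whereas the paper's last display for that term has a typographical $A_{2}^{\alpha}$ in place of $\sqrt{A_{2}^{\alpha}}$. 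The single point of divergence is the uniform collar (``halo'') estimate. The paper imports the quantitative bound $\left\vert I'\setminus(1-\delta)I'\right\vert _{\sigma}\lesssim(\ln\frac{1}{\delta})^{-1}\left\vert I'\right\vert _{\sigma}$ for doubling $\sigma$ from \cite[Lemma 24]{Saw6}, while you correctly isolate this as the crux but leave it unproved and offer two sketches, neither of which closes the gap as stated.

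The reverse-doubling sketch is the wrong tool: reverse doubling bounds $\left\vert sQ\right\vert _{\sigma}/\left\vert Q\right\vert _{\sigma}$ from above for \emph{small} $s$, and gives no information about the measure of the collar between $(1-\delta)Q$ and $Q$. The actual proof of the halo bound uses the (forward) doubling condition, comparing a thin slab of the collar with a chain of $O(\log\frac{1}{\delta})$ adjacent, geometrically thicker slabs filling half of $Q$, iterating the fact that two touching cubes of comparable side have comparable $\sigma$-measure; this yields logarithmic (in fact polynomial) decay. The weak-$\ast$ compactness sketch is plausible in outline but hides two real issues: one must show that a weak-$\ast$ limit of measures with a fixed doubling constant is itself doubling (the one-sided portmanteau inequalities at boundaries require an approximation argument), and one must show that the limit charges no hyperplane --- but this last fact is precisely the qualitative content of the halo bound, so the reduction is not genuinely simpler. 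In short: same architecture as the paper, the gap is correctly identified, but the two proposed repairs are either misdirected (reverse doubling) or not a real simplification over the cited lemma (compactness).
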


\begin{proof}
Given $0<\delta<1$, we split the left hand side as usual,%
\begin{align*}
\sqrt{\int_{3I\setminus I^{\prime}}\left\vert T_{\sigma}^{\alpha}%
\mathbf{1}_{I^{\prime}}\right\vert ^{2}d\omega}  &  \lesssim\sqrt
{\int_{3I\setminus I^{\prime}}\left\vert T_{\sigma}^{\alpha}\mathbf{1}%
_{\left(  1-\delta\right)  I^{\prime}}\right\vert ^{2}d\omega}+\sqrt
{\int_{3I\setminus I^{\prime}}\left\vert T_{\sigma}^{\alpha}\mathbf{1}%
_{I^{\prime}\setminus\left(  1-\delta\right)  I^{\prime}}\right\vert
^{2}d\omega}\\
&  \equiv A+B.
\end{align*}
Using the halo estimate in \cite[Lemma 24 on page 24]{Saw6}, we obtain%
\[
B\leq\mathfrak{N}_{T^{\alpha}}\left(  \sigma,\omega\right)  \sqrt{\left\vert
I^{\prime}\setminus\left(  1-\delta\right)  I^{\prime}\right\vert _{\sigma}%
}\leq\sqrt{\frac{C}{\ln\frac{1}{\delta}}}\mathfrak{N}_{T^{\alpha}}\left(
\sigma,\omega\right)  \sqrt{\left\vert I^{\prime}\right\vert _{\sigma}}.
\]
For term $A$ we have%
\begin{align*}
A  &  \lesssim\sqrt{\int_{3I\setminus I^{\prime}}\left\vert \int_{\left(
1-\delta\right)  I^{\prime}}\left[  \delta\ell\left(  I^{\prime}\right)
\right]  ^{\alpha-n}d\sigma\right\vert ^{2}d\omega}=\sqrt{\left(  3\cdot
2^{m}\delta\right)  ^{2\left(  \alpha-n\right)  }\frac{\left\vert \left(
1-\delta\right)  I\right\vert _{\sigma}\left\vert 3I\setminus I^{\prime
}\right\vert _{\omega}}{\ell\left(  3I\right)  ^{2\left(  n-\alpha\right)  }%
}\left\vert \left(  1-\delta\right)  I\right\vert _{\sigma}}\\
&  \lesssim\sqrt{\left(  2^{m}\delta\right)  ^{2\left(  \alpha-n\right)
}\frac{\left\vert 3I\right\vert _{\sigma}\left\vert 3I\right\vert _{\omega}%
}{\ell\left(  3I\right)  ^{2\left(  n-\alpha\right)  }}\left\vert I\right\vert
_{\sigma}}\leq\left(  2^{m}\delta\right)  ^{\alpha-n}A_{2}^{\alpha}\left(
\sigma,\omega\right)  \sqrt{\left\vert I\right\vert _{\sigma}}.
\end{align*}
Now we choose $0<\delta<1$ so that $\sqrt{\frac{C}{\ln\frac{1}{\delta}}}%
\leq\varepsilon_{1}$.
\end{proof}

Recall that the triple $\kappa$-cube testing conditions use the $Q$-normalized
monomials $m_{Q}^{\beta}\left(  x\right)  \equiv\mathbf{1}_{Q}\left(
x\right)  \left(  \frac{x-c_{Q}}{\ell\left(  Q\right)  }\right)  ^{\beta}$,
for which we have $\left\Vert m_{Q}^{\beta}\right\Vert _{L^{\infty}}\approx1$.

\begin{theorem}
\label{no tails}Suppose that $\sigma$ and $\omega$ are locally finite positive
Borel measures on $\mathbb{R}^{n}$, with $\sigma$ doubling, and let $\kappa
\in\mathbb{N}$. If $T^{\alpha}$ is a bounded operator from $L^{2}\left(
\sigma\right)  $ to $L^{2}\left(  \omega\right)  $, then\ for every
$0<\varepsilon_{2}<1$, there is a positive constant $C\left(  \kappa
,\varepsilon_{2}\right)  $ such that
\[
\mathfrak{TR}_{T^{\alpha}}^{\left(  \kappa\right)  }\left(  \sigma
,\omega\right)  \leq C\left(  \kappa,\varepsilon_{2}\right)  \left[
\mathfrak{T}_{T^{\alpha}}\left(  \sigma,\omega\right)  +\sqrt{A_{2}^{\alpha
}\left(  \sigma,\omega\right)  }\right]  +\varepsilon_{2}\mathfrak{N}%
_{T^{\alpha}}\left(  \sigma,\omega\right)  \ ,\ \ \ \ \ \kappa\geq1,
\]
and where the constants $C\left(  \kappa,\varepsilon_{2}\right)  $ depend only
on $\kappa$ and $\varepsilon$, and not on the operator norm $\mathfrak{N}%
_{T^{\alpha}}\left(  \sigma,\omega\right)  $.
\end{theorem}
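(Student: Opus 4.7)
The plan is to replace the smooth monomial $m_Q^\beta$ by a step-function approximation at a suitable dyadic depth $m$, then combine the cube testing condition on each small subcube with the halo bound supplied by the preceding lemma. For a parameter $m\in\mathbb{N}$ to be chosen, partition $Q$ into its dyadic grandchildren and set
\[
h_m(x)\equiv\sum_{I'\in\mathfrak{C}_{\mathcal{D}}^{(m)}(Q)}m_Q^{\beta}(c_{I'})\,\mathbf{1}_{I'}(x),\qquad r_m\equiv\mathbf{1}_Q m_Q^{\beta}-h_m.
\]
Since $|\nabla m_Q^\beta|\lesssim 1/\ell(Q)$ and each $I'$ has diameter $\approx 2^{-m}\ell(Q)$, we have $\|r_m\|_{L^\infty}\leq C_{\kappa,n}2^{-m}$, so $\|r_m\|_{L^2(\sigma)}\leq C_{\kappa,n}2^{-m}\sqrt{|Q|_\sigma}$. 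Hence the $r_m$-contribution to $\|T_\sigma^\alpha(\mathbf{1}_Q m_Q^\beta)\|_{L^2(\omega,3Q)}$ is bounded by $C_{\kappa,n}2^{-m}\mathfrak{N}_{T^\alpha}\sqrt{|Q|_\sigma}$ via the norm inequality.

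For the main term, the sum defining $h_m$ has $N=2^{mn}$ terms with coefficients bounded by $1$, so Cauchy--Schwarz yields $|T_\sigma^\alpha h_m(x)|^2\leq 2^{mn}\sum_{I'}|T_\sigma^\alpha\mathbf{1}_{I'}(x)|^2$. For each grandchild $I'$, I would split
\[
\int_{3Q}|T_\sigma^\alpha\mathbf{1}_{I'}|^2 d\omega=\int_{I'}|T_\sigma^\alpha\mathbf{1}_{I'}|^2 d\omega+\int_{3Q\setminus I'}|T_\sigma^\alpha\mathbf{1}_{I'}|^2 d\omega.
\]
The first integral is controlled by the cube testing constant $\mathfrak{T}_{T^\alpha}(\sigma,\omega)^2|I'|_\sigma$. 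The second is exactly the object bounded in the lemma immediately preceding the theorem, with $I=Q$ and $I'\in\mathfrak{C}_{\mathcal{D}}^{(m)}(Q)$, giving $\bigl(C_{m,\varepsilon_1}\sqrt{A_2^\alpha}+\varepsilon_1\mathfrak{N}_{T^\alpha}\bigr)^2|I'|_\sigma$. Summing over $I'$ (whose $\sigma$-measures add up to $|Q|_\sigma$) produces
\[
\int_{3Q}|T_\sigma^\alpha h_m|^2 d\omega\lesssim 2^{mn}\bigl(\mathfrak{T}_{T^\alpha}^2+C_{m,\varepsilon_1}^2 A_2^\alpha+\varepsilon_1^2 \mathfrak{N}_{T^\alpha}^2\bigr)|Q|_\sigma.
\]

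To finish, I would choose $m$ large enough that $C_{\kappa,n}2^{-m}<\varepsilon_2/2$ (absorbing the $r_m$-term into $\varepsilon_2\mathfrak{N}_{T^\alpha}$), and then, with $m$ now fixed, choose $\varepsilon_1$ so small that $2^{mn/2}\varepsilon_1<\varepsilon_2/2$ (absorbing the lemma's $\varepsilon_1 \mathfrak{N}_{T^\alpha}$ piece into the same $\varepsilon_2\mathfrak{N}_{T^\alpha}$). Taking the supremum over cubes $Q$ and multi-indices $|\beta|<\kappa$ then gives $\mathfrak{TR}_{T^\alpha}^{(\kappa)}\leq C(\kappa,\varepsilon_2)[\mathfrak{T}_{T^\alpha}+\sqrt{A_2^\alpha}]+\varepsilon_2\mathfrak{N}_{T^\alpha}$.

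The main obstacle is precisely the order of these choices: increasing $m$ to kill the polynomial approximation error amplifies everything else by the factor $2^{mn/2}$ coming from Cauchy--Schwarz over the $2^{mn}$ grandchildren, and it also inflates the constant $C_{m,\varepsilon_1}$ from the halo lemma. The argument succeeds only because $\varepsilon_1$ is a free parameter in the preceding lemma and can be driven to zero \emph{after} $m$ has been fixed, so that the doubly-dependent constant $C(\kappa,\varepsilon_2)$ multiplies only the benign quantities $\mathfrak{T}_{T^\alpha}$ and $\sqrt{A_2^\alpha}$, while $\mathfrak{N}_{T^\alpha}$ survives only with the intended small coefficient $\varepsilon_2$.
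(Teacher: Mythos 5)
Your proposal is correct and follows essentially the same route as the paper's proof: approximate the normalized polynomial $m_Q^\beta$ by a step function on depth-$m$ dyadic grandchildren, absorb the approximation error via the norm inequality, bound the main term grandchild-by-grandchild using cube testing on $I'$ together with the preceding halo lemma on $3Q\setminus I'$, and then fix $m$ first and shrink $\varepsilon_1$ afterwards so that only $\varepsilon_2\mathfrak{N}_{T^{\alpha}}$ survives. The only cosmetic difference is that you use Cauchy--Schwarz over the $2^{mn}$ grandchildren where the paper uses the triangle inequality, which costs the same factor $2^{mn/2}$ and changes nothing.
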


\begin{proof}
Fix a dyadic cube $I$. If $P$ is an $I$-normalized polynomial of degree less
than $\kappa$ on the cube $I$, i.e. $\left\Vert P\right\Vert _{L^{\infty}%
}\approx1$, then we can approximate $P$ by a step function
\[
S\equiv\sum_{I^{\prime}\in\mathfrak{C}_{\mathcal{D}}^{\left(  m\right)
}\left(  I\right)  }a_{I^{\prime}}\mathbf{1}_{I^{\prime}},
\]
satisfying%
\[
\left\Vert S-\mathbf{1}_{I}P\right\Vert _{L^{\infty}\left(  \sigma\right)
}<\frac{\varepsilon_{2}}{2}\ ,
\]
provided we take $m\geq1$ sufficiently large depending on $n$ and $\kappa$,
but independent of the cube $I$. Then using the above lemma with $C2^{\frac
{m}{2}}\varepsilon_{1}\leq\frac{\varepsilon_{2}}{2}$, and the estimate
$\left\vert a_{I^{\prime}}\right\vert \lesssim\left\Vert P\right\Vert
_{L^{\infty}}\lesssim1$, we have%
\begin{align*}
&  \sqrt{\int_{3I}\left\vert T_{\sigma}^{\alpha}\mathbf{1}_{I}P\right\vert
^{2}d\omega}\leq\sqrt{\int_{3I}\left\vert \sum_{I^{\prime}\in\mathfrak{C}%
_{\mathcal{D}}^{\left(  m\right)  }\left(  I\right)  }a_{I^{\prime}}T_{\sigma
}^{\alpha}\mathbf{1}_{I^{\prime}}\right\vert ^{2}d\omega}+\sqrt{\int
_{3I}\left\vert T_{\sigma}^{\alpha}\left[  \left(  S-P\right)  \mathbf{1}%
_{I}\right]  \right\vert ^{2}d\omega}\\
&  \leq C\sum_{I^{\prime}\in\mathfrak{C}_{\mathcal{D}}^{\left(  m\right)
}\left(  I\right)  }\sqrt{\int_{3I\setminus I^{\prime}}\left\vert
a_{I^{\prime}}T_{\sigma}^{\alpha}\mathbf{1}_{I^{\prime}}\right\vert
^{2}d\omega}+C\sum_{I^{\prime}\in\mathfrak{C}_{\mathcal{D}}^{\left(  m\right)
}\left(  Q\right)  }\sqrt{\int_{I^{\prime}}\left\vert a_{I^{\prime}}T_{\sigma
}^{\alpha}\mathbf{1}_{I^{\prime}}\right\vert ^{2}d\omega}+\frac{\varepsilon
_{2}}{2}\mathfrak{N}_{T^{\alpha}}\left(  \sigma,\omega\right)  \sqrt
{\left\vert I\right\vert _{\sigma}}\\
&  \leq C\sum_{I^{\prime}\in\mathfrak{C}_{\mathcal{D}}^{\left(  m\right)
}\left(  Q\right)  }\left\{  C_{m,\varepsilon_{1}}\sqrt{A_{2}^{\alpha}\left(
\sigma,\omega\right)  }+\varepsilon_{1}\mathfrak{N}_{T^{\alpha}}\left(
\sigma,\omega\right)  +\mathfrak{T}_{T^{\alpha}}\left(  \sigma,\omega\right)
\right\}  \sqrt{\left\vert I^{\prime}\right\vert _{\sigma}}+\frac
{\varepsilon_{2}}{2}\mathfrak{N}_{T^{\alpha}}\left(  \sigma,\omega\right)
\sqrt{\left\vert I\right\vert _{\sigma}}\\
&  \leq C\left\{  \mathfrak{T}_{T^{\alpha}}\left(  \sigma,\omega\right)
+\sqrt{A_{2}^{\alpha}\left(  \sigma,\omega\right)  }\right\}  \sqrt{\left\vert
I\right\vert _{\sigma}}+\left(  C2^{\frac{m}{2}}\varepsilon_{1}+\frac
{\varepsilon_{2}}{2}\right)  \mathfrak{N}_{T^{\alpha}}\left(  \sigma
,\omega\right)  \sqrt{\left\vert I\right\vert _{\sigma}}.
\end{align*}

\end{proof}

Combining this with (\ref{absorb}) we obtain%
\[
\mathfrak{N}_{T^{\alpha}}\left(  \sigma,\omega\right)  \leq C\left(
\mathfrak{T}_{T^{\alpha}}\left(  \sigma,\omega\right)  +\mathfrak{T}%
_{T^{\alpha,\ast}}\left(  \omega,\sigma\right)  +\sqrt{A_{2}^{\alpha}\left(
\sigma,\omega\right)  }+\mathfrak{T}_{T^{\alpha}}^{\operatorname*{ind}}\left(
\sigma,\omega\right)  \right)  +C\left(  \varepsilon_{2}+\varepsilon
_{3}\right)  \mathfrak{N}_{T^{\alpha}}\left(  \sigma,\omega\right)  .
\]
Since $\mathfrak{N}_{T^{\alpha}}\left(  \sigma,\omega\right)  <\infty$ for
each truncation, we may absorb the final summand on the right into the left
hand side provided $C\left(  \varepsilon_{2}+\varepsilon_{3}\right)  <\frac
{1}{2}$, to obtain%
\[
\mathfrak{N}_{T^{\alpha}}\left(  \sigma,\omega\right)  \lesssim\mathfrak{T}%
_{T^{\alpha}}\left(  \sigma,\omega\right)  +\mathfrak{T}_{T^{\alpha,\ast}%
}\left(  \omega,\sigma\right)  +\sqrt{A_{2}^{\alpha}\left(  \sigma
,\omega\right)  }+\mathfrak{T}_{T^{\alpha}}^{\operatorname*{ind}}\left(
\sigma,\omega\right)  ,
\]
Now we eliminate the Muckenhoupt constant from the right hand side using the
following lemma.

\begin{lemma}
If $K^{\alpha}$ is elliptic in the sense of Stein, then
\[
\sqrt{A_{2}^{\alpha}\left(  \sigma,\omega\right)  }\lesssim\mathfrak{T}%
_{T^{\alpha}}^{\operatorname*{ind}}\left(  \sigma,\omega\right)  .
\]

\end{lemma}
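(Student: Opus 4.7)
The plan is to exploit Stein ellipticity to produce, for any cube $Q_{0}$, a testing configuration $(Q,Q')$ of equal-sized, well-separated cubes on which the kernel has a definite sign and a pointwise lower bound of the right order, and then feed the resulting reverse inequality into the norm bound.

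First I would fix a cube $Q_{0}$, set $L=c\,\ell(Q_{0})$ for a large dimensional constant $c$ to be chosen, and take $Q=Q_{0}-L\mathbf{u}_{0}$, $Q'=Q_{0}$, so that $Q'=Q+L\mathbf{u}_{0}$. For $x\in Q'$ and $y\in Q$, write $z=y+L\mathbf{u}_{0}$, so $|x-z|\le\sqrt{n}\,\ell(Q)$ and $|z-y|=L$. Choosing $c$ so that $|x-z|/|z-y|\le\tfrac12$, the first-variable smoothness in \eqref{sizeandsmoothness'} gives
\[
|K^{\alpha}(x,y)-K^{\alpha}(z,y)|\ \le\ C_{CZ}\bigl(\sqrt{n}/c\bigr)^{\delta}L^{\alpha-n},
\]
while Stein ellipticity \eqref{steinelliptic} gives $|K^{\alpha}(z,y)|\ge c_{0}L^{\alpha-n}$. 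For each fixed $y$, the map $t\mapsto K^{\alpha}(y+t\mathbf{u}_{0},y)$ is continuous and nonvanishing on $\{t>0\}$ by ellipticity, hence has constant sign; joint continuity of $K^{\alpha}$ and connectedness of $\mathbb{R}^{n}$ then force this sign to be independent of $y$. Replacing $T^{\alpha}$ by $-T^{\alpha}$ if necessary, we may assume it is positive. Taking $c$ large enough, we conclude
\[
K^{\alpha}(x,y)\ \ge\ \tfrac12 c_{0}\,L^{\alpha-n},\qquad x\in Q',\ y\in Q.
\]

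Next, for an admissible truncation whose support contains $Q'\times Q$ (these cubes are disjoint since $L\gg\ell(Q)$, so sufficiently fine/coarse smooth truncations agree with $K^{\alpha}$ there), the pointwise lower bound yields $T_{\sigma}^{\alpha}\mathbf{1}_{Q}(x)\gtrsim L^{\alpha-n}|Q|_{\sigma}$ for $x\in Q'$, hence
\[
L^{2(\alpha-n)}|Q|_{\sigma}^{2}|Q'|_{\omega}\ \lesssim\ \int_{Q'}|T_{\sigma}^{\alpha}\mathbf{1}_{Q}|^{2}\,d\omega\ \le\ \mathfrak{N}_{T^{\alpha}}^{2}|Q|_{\sigma}.
\]
Using $L\approx\ell(Q_{0})$, so $L^{n-\alpha}\approx|Q_{0}|^{1-\alpha/n}$, this rearranges to
\[
|Q|_{\sigma}\,|Q_{0}|_{\omega}\ \lesssim\ \mathfrak{N}_{T^{\alpha}}^{2}\,|Q_{0}|^{2(1-\alpha/n)}.
\]
Finally, since $Q\subset R$ and $Q_{0}\subset R$ for a common cube $R$ of side comparable to $\ell(Q_{0})$, the doubling hypothesis on $\sigma$ gives $|Q_{0}|_{\sigma}\lesssim|R|_{\sigma}\lesssim|Q|_{\sigma}$, so we may replace $|Q|_{\sigma}$ by $|Q_{0}|_{\sigma}$ on the left. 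Taking the supremum over $Q_{0}\in\mathcal{P}^{n}$ in the definition \eqref{Muck and test} of $A_{2}^{\alpha}$ yields $\sqrt{A_{2}^{\alpha}(\sigma,\omega)}\lesssim\mathfrak{N}_{T^{\alpha}}(\sigma,\omega)$.

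The only genuinely delicate step is the sign argument for $K^{\alpha}$ along the ellipticity ray; everything else is a standard reverse-testing computation. Apart from that, one must be slightly careful to choose the truncation so that the kernel lower bound on $Q'\times Q$ survives intact, but this is automatic once $\ell(Q)\ll L$ since $Q$ and $Q'$ are then separated.
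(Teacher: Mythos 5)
Your argument is correct and is essentially the same approach as the paper's. The paper's proof is short because it simply cites the relevant computation to Stein \cite[Proposition 7, p.~210]{Ste2} and then appeals to doubling; what you have done is reproduce that Stein argument in full (ellipticity gives a lower bound on $|K^{\alpha}|$ along the ray $\mathbf{u}_{0}$; Calder\'on--Zygmund regularity shows the kernel cannot change by more than a small fraction of that lower bound on a well-separated pair of cubes; a continuity/connectedness argument pins down a uniform sign; then reverse-testing with $\mathbf{1}_{Q}$ over $Q'$ and finally doubling to pass from the pair $(Q,Q')$ back to a single cube $Q_{0}$). Two small technical remarks: (i) the exponent in your perturbation bound is slightly off --- using the first-order gradient bound $|\nabla_{1}K^{\alpha}(x,y)|\le C_{CZ}|x-y|^{\alpha-1-n}$ and the mean value theorem you actually get $|K^{\alpha}(x,y)-K^{\alpha}(z,y)|\lesssim (\sqrt{n}/c)L^{\alpha-n}$ (power $1$, not $\delta$), though either version tends to zero as $c\to\infty$, so the conclusion is unaffected; (ii) the paper's phrasing takes $Q,Q'$ to be small subcubes of $Q_{0}$ separated by $\approx\ell(Q)$, while you take $Q,Q'$ to be of the same size as $Q_{0}$ but separated by the large multiple $L=c\,\ell(Q_{0})$ --- these are equivalent after rescaling and both are reconciled by the final doubling step, so this is a cosmetic difference only.
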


\begin{proof}
The argument in \cite[see the proof of Proposition 7 page 210]{Ste2} shows
that if $T^{\alpha}$ satisfies (\ref{steinelliptic}), there is $\varepsilon
>0$, depending only on the constant $C_{CZ}$ in (\ref{sizeandsmoothness'}),
such that
\[
\frac{\left\vert Q\right\vert _{\omega}\left\vert Q^{\prime}\right\vert
_{\sigma}}{\ell\left(  Q\right)  ^{2\left(  n-\alpha\right)  }}\lesssim
\mathfrak{T}_{T^{\alpha}}^{\operatorname*{ind}}\left(  \sigma,\omega\right)
^{2},
\]
when $\operatorname*{dist}\left(  Q,Q^{\prime}\right)  \approx\ell\left(
Q\right)  =\ell\left(  Q^{\prime}\right)  $, and $\left\vert \frac
{x-y}{\left\vert x-y\right\vert }-\mathbf{u}_{0}\right\vert <\varepsilon$
whenever $x\in Q$ and $y\in Q^{\prime}$. Indeed, given such cubes $Q$ and
$Q^{\prime}$ we may assume they are subcubes of a cube $I$ with $\ell\left(
I\right)  \lesssim\ell\left(  Q\right)  $, and then we have%
\[
\mathfrak{T}_{T^{\alpha}}^{\operatorname*{ind}}\left(  \sigma,\omega\right)
^{2}\geq\sup_{E\subset I}\frac{\int_{I}\left\vert T_{\sigma}^{\lambda
}\mathbf{1}_{E}\right\vert ^{2}d\omega}{\left\vert I\right\vert _{\sigma}%
}\gtrsim\frac{1}{\left\vert Q\right\vert _{\sigma}}\left(  \frac{\left\vert
Q^{\prime}\right\vert _{\sigma}\left\vert Q\right\vert _{\omega}}{\ell\left(
I\right)  ^{n-\alpha}}\right)  ^{2}\gtrsim\frac{\left\vert Q\right\vert
_{\omega}\left\vert Q^{\prime}\right\vert _{\sigma}}{\ell\left(  Q\right)
^{2\left(  n-\alpha\right)  }}\approx\frac{\left\vert Q\right\vert _{\omega
}\left\vert Q\right\vert _{\sigma}}{\ell\left(  Q\right)  ^{2\left(
n-\alpha\right)  }},
\]
since $\sigma$ is doubling. The lemma follows upon taking the supremum over
cubes $Q$.
\end{proof}

Since the norm constant obviously bounds the two testing constants, we have
proved%
\[
\mathfrak{N}_{T^{\alpha}}\left(  \sigma,\omega\right)  \approx\mathfrak{T}%
_{T^{\alpha,\ast}}\left(  \omega,\sigma\right)  +\mathfrak{T}_{T^{\alpha}%
}^{\operatorname*{ind}}\left(  \sigma,\omega\right)  ,
\]
and the remaining equivalences in Theorem \ref{main} that do not involve
cancellation conditions $\mathfrak{A}_{T^{\alpha}}\left(  \sigma
,\omega\right)  $ and $\mathfrak{A}_{T^{\alpha.\ast}}\left(  \omega
,\sigma\right)  $ follow by symmetry. By \cite{Saw6}, the testing conditions
are equivalent to the cancellation conditions when the measures are doubling.

\section{A $T_{\operatorname*{translate}}$ theorem for doubling measures}

\label{T translate section}

The arguments used above rely crucially on the nested property of dyadic
cubes, and break down completely even for balls. Here we instead use a fairly
elementary argument to show that testing over indicators of cubes
$\mathbf{1}_{Q}$ in the doubling theorem above can be replaced by testing over
indicators of balls $\mathbf{1}_{B}$. In fact, we can replace cubes or balls
by translates and dilates of any fixed bounded set having positive Lebesgue
measure. The theorem below extends this to translates of fixed bounded
functions $b_{k},b_{k}^{\ast}$ with integral $1$ at each length scale $2^{k}$,
which we refer to as a \textquotedblleft$T_{\operatorname*{translate}}$
theorem\textquotedblright\ \emph{because at any given scale, we test a
function \textquotedblleft b}$_{k}$\emph{\textquotedblright\ and all of its
translates at that scale.}

More precisely, suppose that for each $t\in\left(  0,\infty\right)  $, we are
given a bounded complex-valued function $b_{t}$ on $\mathbb{R}^{n}$ satisfying

\begin{enumerate}
\item $\operatorname*{Supp}b_{t}\subset B\left(  0,t\right)  $,

\item $\left\vert b_{t}\left(  x\right)  \right\vert \leq\frac{C}{\left\vert
Q_{t}\right\vert }$,

\item $\int b_{t}\left(  x\right)  dx=1$.
\end{enumerate}

\begin{definition}
\label{translate testing}Set $\mathcal{B}\equiv\left\{  b_{Q}\right\}
_{Q\in\mathcal{P}^{n}}$ where $b_{Q}\left(  x\right)  \equiv\left\vert
Q\right\vert b_{\ell\left(  Q\right)  }\left(  x-c_{Q}\right)  $ is a
translation and normalization of $b_{\ell\left(  Q\right)  }$ (so that it
satisfies estimates similar to $\mathbf{1}_{Q}$) where the functions $b_{t}$
are as above. For $\sigma$ and $\omega$ locally finite positive Borel measures
on $\mathbb{R}^{n}$, define the $\mathcal{B}$\emph{-testing constant} and the
$\delta$\emph{-full }$\mathcal{B}$\emph{-testing constant} for the operator
$T^{\alpha}$ by%
\begin{align*}
\mathfrak{T}_{T^{\alpha}}^{\mathcal{B}}\left(  \sigma,\omega\right)   &
\equiv\sup_{Q}\frac{\sqrt{\int_{Q}\left\vert T_{\sigma}^{\alpha}b_{Q}\left(
x\right)  \right\vert ^{2}d\omega\left(  x\right)  }}{\sqrt{\int_{Q}\left\vert
b_{Q}\left(  y\right)  \right\vert ^{2}d\sigma\left(  y\right)  }},\\
\mathfrak{F}_{\delta}\mathfrak{T}_{T^{\alpha}}^{\mathcal{B}}\left(
\sigma,\omega\right)   &  \equiv\sup_{Q}\frac{\sqrt{\int_{\frac{2}{\delta}%
Q}\left\vert T_{\sigma}^{\alpha}b_{Q}\left(  x\right)  \right\vert ^{2}%
d\omega\left(  x\right)  }}{\sqrt{\int_{Q}\left\vert b_{Q}\left(  y\right)
\right\vert ^{2}d\sigma\left(  y\right)  }}.
\end{align*}

\end{definition}

If we take $b_{t}(x)=\frac{1}{t^{n}}\mathbf{1}_{B\left(  0,t\right)  }\left(
x\right)  $, then the $\mathcal{B}$-testing constant is simply the
ball-testing constant.

Fix $0<\delta<1$. Given a cube $Q\in\mathcal{P}$, define the $\delta
$-convolution%
\begin{align*}
b_{Q,\delta}\left(  y\right)   &  \equiv\mathbf{1}_{Q}\ast b_{\delta
\ell\left(  Q\right)  }\left(  y\right)  =\int_{\mathbb{R}^{n}}\mathbf{1}%
_{Q}\left(  y-z\right)  b_{\delta\ell\left(  Q\right)  }\left(  z\right)  dz\\
&  =\int_{Q}b_{\delta\ell\left(  Q\right)  }\left(  y-z\right)  dz=\int
_{Q}\tau_{z}b_{\delta\ell\left(  Q\right)  }\left(  y\right)
dz,\ \ \ \ \ x\in\mathbb{R}^{n}.
\end{align*}
Now we compute%
\begin{align*}
&  \sqrt{\int_{Q}\left\vert T_{\sigma}^{\alpha}\mathbf{1}_{Q}\left(  x\right)
\right\vert ^{2}d\omega\left(  x\right)  }\leq\sqrt{\int_{Q}\left\vert
T_{\sigma}^{\alpha}b_{Q,\delta}\left(  x\right)  \right\vert ^{2}%
d\omega\left(  x\right)  }+\sqrt{\int_{Q}\left\vert T_{\sigma}^{\alpha}\left(
\mathbf{1}_{Q}-b_{Q,\delta}\right)  \left(  x\right)  \right\vert ^{2}%
d\omega\left(  x\right)  }\\
&  \leq\sqrt{\int_{Q}\left\vert T_{\sigma}^{\alpha}b_{Q,\delta}\left(
x\right)  \right\vert ^{2}d\omega\left(  x\right)  }+\mathfrak{N}_{T^{\alpha}%
}\sqrt{\int_{Q}\left\vert \left(  \mathbf{1}_{Q}-b_{Q,\delta}\right)  \left(
x\right)  \right\vert ^{2}d\sigma\left(  x\right)  },
\end{align*}
where%
\begin{align*}
&  \sqrt{\int_{Q}\left\vert T_{\sigma}^{\alpha}b_{Q,\delta}\left(  x\right)
\right\vert ^{2}d\omega\left(  x\right)  }=\left\vert Q\right\vert \sqrt
{\int_{Q}\left\vert T_{\sigma}^{\alpha}\left(  \int_{Q}\tau_{z}b_{\delta
\ell\left(  Q\right)  }\frac{dz}{\left\vert Q\right\vert }\right)  \left(
x\right)  \right\vert ^{2}d\omega\left(  x\right)  }\\
&  \leq\left\vert Q\right\vert \int_{Q}\sqrt{\int_{Q}\left\vert T_{\sigma
}^{\alpha}\left(  \tau_{z}b_{\delta\ell\left(  Q\right)  }\right)  \left(
x\right)  \right\vert ^{2}d\omega\left(  x\right)  }\frac{dz}{\left\vert
Q\right\vert }\\
&  \leq\int_{Q}\mathfrak{F}_{\delta}\mathfrak{T}_{T^{\alpha}}^{\mathcal{B}%
}\sqrt{\int_{Q}\left\vert \left(  \tau_{z}b_{\delta\ell\left(  Q\right)
}\right)  \left(  y\right)  \right\vert ^{2}d\sigma\left(  y\right)  }dz\\
&  \leq\mathfrak{F}_{\delta}\mathfrak{T}_{T^{\alpha}}^{\mathcal{B}}%
\frac{C\left\vert Q\right\vert }{\left\vert Q_{\delta\ell\left(  Q\right)
}\right\vert }\sqrt{\left\vert Q\right\vert _{\sigma}}\leq C_{\delta
}\mathfrak{F}_{\delta}\mathfrak{T}_{T^{\alpha}}^{\mathcal{B}}\sqrt{\left\vert
Q\right\vert _{\sigma}},
\end{align*}
and%
\[
\sqrt{\int_{Q}\left\vert \left(  \mathbf{1}_{Q}-b_{Q,\delta}\right)  \left(
x\right)  \right\vert ^{2}d\sigma\left(  x\right)  }\leq C\sqrt{\frac{1}%
{\ln\frac{1}{\delta}}\left\vert Q\right\vert _{\sigma}}.
\]
Thus altogether we obtain%
\begin{equation}
\mathfrak{T}_{T^{\alpha}}=\sup_{Q}\frac{\sqrt{\int_{Q}\left\vert T_{\sigma
}^{\alpha}\mathbf{1}_{Q}\left(  x\right)  \right\vert ^{2}d\omega\left(
x\right)  }}{\sqrt{\left\vert Q\right\vert _{\sigma}}}\leq C_{\delta
}\mathfrak{F}_{\delta}\mathfrak{T}_{T^{\alpha}}^{\mathcal{B}}+C\sqrt{\frac
{1}{\ln\frac{1}{\delta}}}\mathfrak{N}_{T^{\alpha}}\ , \label{alt}%
\end{equation}
and now using $\sqrt{A_{2}^{\alpha}}$ to control $\mathfrak{F}_{\delta
}\mathfrak{T}_{T^{\alpha}}^{\mathcal{B}}$ by $\mathfrak{T}_{T^{\alpha}%
}^{\mathcal{B}}$, we have shown that a $T1$ theorem for $T^{\alpha}$ over
cubes follows from a $Tb$ theorem where the functions $b_{Q}$ are translates
of the fixed function $b_{\ell\left(  Q\right)  }$. In particular we can take
$b_{t}=\frac{1}{t^{n}}\mathbf{1}_{B\left(  0,t\right)  }\left(  x\right)  $ to
get ball-testing implies cube-testing.

More precisely, we have the following theorem.

\begin{theorem}
\label{trans}Let $\sigma$ and $\omega$ be doubling measures on $\mathbb{R}%
^{n}$. Then with $T^{\alpha}$ and $\mathcal{B}$ as above we have both%
\begin{align}
\mathfrak{F}_{\delta}\mathfrak{T}_{T^{\alpha}}^{\mathcal{B}}\left(
\sigma,\omega\right)   &  \leq C_{\delta}\mathfrak{T}_{T^{\alpha}%
}^{\mathcal{B}}\left(  \sigma,\omega\right)  +C_{\delta}\sqrt{A_{2}^{\alpha
}\left(  \sigma,\omega\right)  }+C\sqrt{\frac{1}{\ln\frac{1}{\delta}}%
}\mathfrak{N}_{T^{\alpha}}\left(  \sigma,\omega\right)  ,\label{both}\\
\mathfrak{T}_{T^{\alpha}}\left(  \sigma,\omega\right)   &  \leq C_{\delta
}\mathfrak{F}_{\delta}\mathfrak{T}_{T^{\alpha}}^{\mathcal{B}}\left(
\sigma,\omega\right)  +C\sqrt{\frac{1}{\ln\frac{1}{\delta}}}\mathfrak{N}%
_{T^{\alpha}}\left(  \sigma,\omega\right)  .\nonumber
\end{align}
Altogether then we have%
\[
\mathfrak{T}_{T^{\alpha}}\left(  \sigma,\omega\right)  \leq C_{\delta
}\mathfrak{T}_{T^{\alpha}}^{\mathcal{B}}\left(  \sigma,\omega\right)
+C_{\delta}\sqrt{A_{2}^{\alpha}\left(  \sigma,\omega\right)  }+C\sqrt{\frac
{1}{\ln\frac{1}{\delta}}}\mathfrak{N}_{T^{\alpha}}\left(  \sigma
,\omega\right)  ,
\]
and hence
\[
\mathfrak{N}_{T^{\alpha}}\left(  \sigma,\omega\right)  \leq C\left(
\mathfrak{T}_{T^{\alpha}}^{\mathcal{B}}+\mathfrak{T}_{\left(  T^{\alpha
}\right)  ^{\ast}}^{\operatorname*{ind},\mathcal{B}}+\sqrt{A_{2}^{\alpha}%
}\right)  ,
\]
where%
\[
\mathfrak{T}_{\left(  T^{\alpha}\right)  ^{\ast}}^{\operatorname*{ind}%
,\mathcal{B}}\equiv\sup_{B}\sqrt{\frac{1}{\left\vert B\right\vert _{\omega}%
}\sup_{E\subset B}\int_{B}\left\vert T_{\omega}^{\alpha,\ast}\mathbf{1}%
_{E}\right\vert ^{2}\sigma},
\]
and the supremum is taken over all balls $B$.

\begin{proof}
The bound for $\mathfrak{F}_{\delta}\mathfrak{T}_{T^{\alpha}}^{\mathcal{B}%
}\left(  \sigma,\omega\right)  $ in the first line of (\ref{both}) is in
Theorem \ref{no tails}, and the bound for $\mathfrak{T}_{T^{\alpha}}\left(
\sigma,\omega\right)  $ in the second line of (\ref{both}) is in (\ref{alt}).
The reader can easily check that the indicator / cube constant $\mathfrak{T}%
_{\left(  T^{\alpha}\right)  ^{\ast}}^{\operatorname*{ind}}$ is comparable to
the indicator / ball constant $\mathfrak{T}_{\left(  T^{\alpha}\right)
^{\ast}}^{\operatorname*{ind},\mathcal{B}}$ because the measures are doubling.
An absorption argument proves the final display.
\end{proof}
\end{theorem}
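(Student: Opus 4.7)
The plan is to establish the two inequalities comprising (\ref{both}), combine them into the ``Altogether'' display, and then close the final display by invoking Theorem \ref{main} and absorbing a small multiple of $\mathfrak{N}_{T^\alpha}$ from the right into the left. The whole argument lives at the level of testing constants; the $T1$ reach of the paper is used only as a black box.

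For the second line of (\ref{both}) -- cube testing controlled by $\delta$-full $\mathcal{B}$-testing -- the chain of inequalities is essentially the one already displayed between Definition \ref{translate testing} and (\ref{alt}). One replaces $\mathbf{1}_Q$ by the $\delta$-regularization $B_{Q,\delta} = \mathbf{1}_Q \ast B_{\delta\ell(Q)}$, writes
\[
B_{Q,\delta}(y) = \int_Q \tau_z B_{\delta\ell(Q)}(y)\,dz,
\]
applies Minkowski's integral inequality in $z$, and observes that for each $z\in Q$ the cube of side $\delta\ell(Q)$ centred at $z$, when dilated by $\tfrac{2}{\delta}$, contains $Q$; this is precisely the condition needed to invoke the $\delta$-full $\mathcal{B}$-testing constant against the translate $\tau_z B_{\delta\ell(Q)}$. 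The pointwise $L^\infty$ bound on $B_{\delta\ell(Q)}$ collapses the $z$-integral. The error from replacing $\mathbf{1}_Q$ by $B_{Q,\delta}$ is controlled in $L^2(\sigma)$ by $\sqrt{C/\ln(1/\delta)}\sqrt{|Q|_\sigma}$ using the halo estimate (Lemma 24 of \cite{Saw6}), and is absorbed against $\mathfrak{N}_{T^\alpha}$, producing the $\sqrt{1/\ln(1/\delta)}\,\mathfrak{N}_{T^\alpha}$ term.

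For the first line of (\ref{both}), I would adapt the near-field/halo decomposition that drives Theorem \ref{no tails}. Split
\[
\int_{\frac{2}{\delta}Q} |T_\sigma^\alpha b_Q|^2\,d\omega = \int_Q |T_\sigma^\alpha b_Q|^2\,d\omega + \int_{\frac{2}{\delta}Q\setminus Q} |T_\sigma^\alpha b_Q|^2\,d\omega.
\]
The first integral is at most $(\mathfrak{T}_{T^\alpha}^{\mathcal{B}})^2 \int_Q|b_Q|^2\,d\sigma$ by the definition of $\mathcal{B}$-testing. For the second, further decompose $b_Q = b_Q\mathbf{1}_{(1-\eta)Q} + b_Q\mathbf{1}_{Q\setminus(1-\eta)Q}$ for an auxiliary halo parameter $\eta\in(0,1)$. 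Since $\|b_Q\|_\infty \lesssim 1$, the boundary piece has $L^2(\sigma)$ norm $\lesssim \sqrt{C/\ln(1/\eta)}\sqrt{|Q|_\sigma}$ by the halo lemma, so its $T^\alpha$-image has $L^2(\omega)$ norm at most $\mathfrak{N}_{T^\alpha}\sqrt{C/\ln(1/\eta)}\sqrt{|Q|_\sigma}$. For the interior piece, whenever $x \in \frac{2}{\delta}Q \setminus Q$ and $y \in (1-\eta)Q$ one has $|x-y| \gtrsim \eta\,\ell(Q)$, so the kernel size estimate yields
\[
|T_\sigma^\alpha(b_Q\mathbf{1}_{(1-\eta)Q})(x)| \lesssim \frac{|Q|_\sigma}{(\eta\ell(Q))^{n-\alpha}};
\]
squaring, integrating against $\omega$ on $\frac{2}{\delta}Q$, and using doubling of $\omega$ together with the definition of $A_2^\alpha$ gives $C_{\delta,\eta}\,A_2^\alpha(\sigma,\omega)\,|Q|_\sigma$. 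Under doubling of $\sigma$, the normalization $\int B_{\ell(Q)}=1$ together with the sharp support/$L^\infty$ bounds on $B_{\ell(Q)}$ yields $\int_Q|b_Q|^2\,d\sigma \approx |Q|_\sigma$, so every appearance of $|Q|_\sigma$ on the right can be rewritten as $\int_Q|b_Q|^2\,d\sigma$. Fixing $\eta$ so that $\sqrt{C/\ln(1/\eta)}$ is below a prescribed $\varepsilon$ closes the bound.

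Combining the two lines of (\ref{both}) yields the ``Altogether'' display. Inserting this bound and its dual into the $T1$ estimate $\mathfrak{N}_{T^\alpha} \lesssim \mathfrak{T}_{T^\alpha} + \mathfrak{T}_{(T^\alpha)^\ast} + \sqrt{A_2^\alpha}$ from Theorem \ref{main} produces
\[
\mathfrak{N}_{T^\alpha} \leq C_\delta\bigl(\mathfrak{T}^{\mathcal{B}}_{T^\alpha} + \mathfrak{T}^{\mathcal{B}}_{(T^\alpha)^\ast} + \sqrt{A_2^\alpha}\bigr) + C\sqrt{1/\ln(1/\delta)}\,\mathfrak{N}_{T^\alpha},
\]
and fixing $\delta$ so that the last coefficient is below $\tfrac12$ allows absorption into the left, yielding the final display. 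The principal technical obstacle is the interior/boundary split in the third paragraph: one must jointly balance $\eta$ against $\delta$ so that the $A_2^\alpha$-coefficient is absolute in $\mathfrak{N}_{T^\alpha}$ while the boundary-produced $\mathfrak{N}$-coefficient is small independently of $\delta$. One should also check, to whatever generality one wants Theorem \ref{trans} in, the comparison $\int_Q|b_Q|^2\,d\sigma \approx |Q|_\sigma$: immediate for the ball- and cube-testing cases that motivate the theorem, but requiring mild nondegeneracy on $B_t$ in general.
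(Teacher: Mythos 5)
Your proposal is correct and runs on the same rails as the paper: convolution regularization $B_{Q,\delta}=\mathbf 1_Q*B_{\delta\ell(Q)}$ with Minkowski in $z$ for the second line of (\ref{both}), a halo/kernel-size decomposition for the first line, and absorption against $\mathfrak N_{T^\alpha}$ via Theorem \ref{main} for the final display. What you have done differently is to supply the argument for the first line of (\ref{both}) directly, rather than citing Theorem \ref{no tails} as the paper does; that cross-reference cannot be read literally (Theorem \ref{no tails} bounds the triple $\kappa$-cube testing constant $\mathfrak{TR}^{(\kappa)}_{T^\alpha}$, taken over monomials on $3Q$, not the $\delta$-full $\mathcal B$-testing constant over $\frac{2}{\delta}Q$), so the same-technique filling-in that you provide is genuinely needed. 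You also correctly keep the inner halo parameter $\eta$ decoupled from the enlargement parameter $\delta$; this is necessary for the absorption, since the ``Altogether'' display already carries a $C_\delta$ from (\ref{alt}), and the $\mathfrak N$-coefficient that actually reaches the absorption step is of the form $C_\delta\,\varepsilon(\eta)$, which one must be able to make small with $\delta$ fixed.

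The one genuine point of substance you flag, which the paper does not address, is the two-sided comparability $\int_Q|b_Q|^2\,d\sigma\approx|Q|_\sigma$. The upper bound $\int_Q|b_Q|^2\,d\sigma\lesssim|Q|_\sigma$ follows from $\|b_Q\|_\infty\lesssim1$ alone, and suffices for the second line of (\ref{both}) (and indeed is all the paper's display preceding (\ref{alt}) uses). But your proof of the first line, like the implicit one in the paper, produces right-hand sides involving $\sqrt{|Q|_\sigma}$ that must be converted back to $\sqrt{\int_Q|b_Q|^2\,d\sigma}$, and the required lower bound $|Q|_\sigma\lesssim\int_Q|b_Q|^2\,d\sigma$ does not follow from properties (1)--(3) of $B_t$ together with $\sigma$ doubling: a bounded profile with $\int B_t=1$ can concentrate its support where $\sigma$ is thin. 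A mild nondegeneracy, e.g.\ $|B_t|\geq\frac{c}{|Q_t|}$ on a fixed fraction $\theta Q_t$, $\theta\in(0,1)$, recovers the lower bound via reverse doubling, and is satisfied by the motivating ball and cube profiles. Flagging this is appropriate; it should be added to the hypotheses on $\mathcal B$ in Theorem \ref{trans}.
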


\begin{remark}
The only property of cubes used in Theorem \ref{trans}, was that their
boundaries are not charged by doubling measures (the specific decay
$\sqrt{\frac{1}{\ln\frac{1}{\delta}}}$ of the halo can be replaced by any
decay to zero). Thus we can replace cube testing $\mathfrak{T}_{T^{\alpha}%
}\left(  \sigma,\omega\right)  $ in Theorem \ref{trans} by ball testing, or
any other `shape testing' in which the boundary of the `shape' is not charged
by doubling measures.
\end{remark}

In fact one can extend Theorem \ref{main} to testing over balls instead of
testing over cubes. Due to the above arguments, it remains only to verify that
the ball testing conditions are equivalent to the cancellation conditions over
spherical annuli, which follows by mimicking the arguments in \cite[see
Subsection 9.2]{Saw6}, with spherical annuli replacing cubical annuli.

\section{A characterization of weak type inequalities}

The passage from weak type to strong type above required the recent technology
of NTV good cubes \cite{NTV}, weighted Alpert wavelets \cite{RaSaWi} and
corona decompositions \cite{NTV4}. On the other hand, the characterization of
weak type inequalities derived here uses the classical machinery of Whitney
decompositions, maximum principles and good $\lambda$ inequalities for maximal
singular integrals; see e.g. \cite{LaSaUr1} and \cite{Ste2}. Theorem
\ref{weak type} links the two approaches.

Throughout this section, we suppose that the kernel $K^{\alpha}\left(
x,y\right)  $ of $T^{\alpha}$ satisfies
\begin{align}
\left\vert K^{\alpha}\left(  x,y\right)  \right\vert  &  \leq
C_{\operatorname*{CZ}}\left\vert x-y\right\vert ^{\alpha-n},\label{basic}\\
\left\vert K^{\alpha}\left(  x,y\right)  -K^{\alpha}\left(  x^{\prime
},y\right)  \right\vert  &  \leq C_{\operatorname*{CZ}}\eta\left(
\frac{\left\vert x-x^{\prime}\right\vert }{\left\vert x^{\prime}-y\right\vert
}\right)  \left\vert x-y\right\vert ^{\alpha-n},\ \ \ \ \ \text{when
}\left\vert x^{\prime}-y\right\vert \geq c\left\vert x-x^{\prime}\right\vert
,\nonumber
\end{align}
where $\eta$ is a unit Dini modulus, i.e. $\eta$ is nondecreasing on $\left[
0,1\right]  $ and $\int_{0}^{1}\eta\left(  s\right)  \frac{ds}{s}=1$.

\begin{remark}
The reader can easily check that all of the results in this section extend to
weighted $L^{p}$ spaces for $1<p<\infty$ as well.
\end{remark}

\subsection{Maximal singular integrals}

We begin by adapting an argument in \cite[Chapter I, subsection 7.3, pages
34-36.]{Ste2} in order to control $T_{\flat}^{\alpha}$ by weighted operators
and $T^{\alpha}$.

\begin{enumerate}
\item For any locally finite positive Borel measure $\mu$ we define the
\emph{centered} maximal operator $\mathcal{M}_{\mu}$ acting on a finite
positive Borel measure $\nu$ by%
\[
\mathcal{M}_{\mu}\nu\left(  x\right)  \equiv\sup_{B\in\mathcal{B}^{n}%
:\ c_{B}=x}\frac{\left\vert B\right\vert _{\nu}}{\left\vert B\right\vert
_{\mu}},
\]
where $\mathcal{B}^{n}$ is the collection of all balls $B$ in $\mathbb{R}^{n}%
$, and $c_{B}$ is the center of $B$.

\item For $0\leq\alpha<n$, we define $\mathcal{M}^{\alpha}$ on a finite
positive Borel measure $\nu$ by%
\[
\mathcal{M}^{\alpha}\nu\left(  x\right)  \equiv\sup_{B\in\mathcal{B}%
^{n}:\ c_{B}=x}\frac{1}{\left\vert B\right\vert ^{1-\frac{\alpha}{n}}}\int
_{B}d\nu.
\]

\item Given any $0<r<\infty$, we define $M_{\mu,r}$ acting on a function $f\in
L_{\operatorname{loc}}^{1}\left(  \mu\right)  $ by%
\[
M_{\mu,r}f\left(  x\right)  \equiv\sup_{B\in\mathcal{B}^{n}:\ c_{B}=x}\left(
\frac{1}{\left\vert B\right\vert _{\mu}}\int_{B}\left\vert f\right\vert
^{r}d\mu\right)  ^{\frac{1}{r}}.
\]

\end{enumerate}

\begin{lemma}
\label{flat}Suppose $\sigma$ and $\omega$ are locally finite positive Borel
measures on $\mathbb{R}^{n}$, and that $T^{\alpha}$ is a Calder\'{o}n-Zygmund
operator on $\mathbb{R}^{n}$ with kernel satisfying just (\ref{basic}). Then
for any $r>0$,%
\[
T_{\flat,\sigma}^{\alpha}f\left(  x\right)  \leq A_{\alpha,n,r}\left\{
M_{\omega,r}\left\vert T_{\sigma}^{\alpha}f\right\vert \left(  x\right)
+\mathfrak{N}_{T^{\alpha}}^{\operatorname*{weak}}\left(  \sigma,\omega\right)
\sqrt{\mathcal{M}_{\omega}\left(  \left\vert f\right\vert ^{2}\sigma\right)
\left(  x\right)  }+\mathcal{M}^{\alpha}\left(  \left\vert f\right\vert
\sigma\right)  \left(  x\right)  \right\}  ,\ \ \ \ \ x\in\mathbb{R}^{n}.
\]

\end{lemma}

\begin{proof}
Fix $z\in\mathbb{R}^{n}$ and $\varepsilon>0$. Write
\[
f=f\mathbf{1}_{B\left(  z,\varepsilon\right)  }+f\mathbf{1}_{\mathbb{R}%
^{n}\setminus B\left(  z,\varepsilon\right)  }=f_{1}+f_{2},
\]
so that
\[
T_{\varepsilon,\sigma}^{\alpha}f_{1}\left(  z\right)  =0\text{ and
}T_{\varepsilon,\sigma}^{\alpha}f\left(  z\right)  =T_{\varepsilon,\sigma
}^{\alpha}f_{2}\left(  z\right)  =T_{\sigma}^{\alpha}f_{2}\left(  z\right)  .
\]
We now claim that%
\[
\left\vert T_{\sigma}^{\alpha}f_{2}\left(  z\right)  -T_{\sigma}^{\alpha}%
f_{2}\left(  x\right)  \right\vert \leq A^{\prime}\mathcal{M}^{\alpha}\left(
\left\vert f\right\vert \sigma\right)  \left(  z\right)  ,\ \ \ \ \ \text{for
}\left\vert x-z\right\vert <\frac{\varepsilon}{c}.
\]
Indeed, we have%
\begin{align*}
&  \left\vert T_{\sigma}^{\alpha}f_{2}\left(  z\right)  -T_{\sigma}^{\alpha
}f_{2}\left(  x\right)  \right\vert \leq\int_{\left\vert y-z\right\vert
\geq\varepsilon}\left\vert K^{\alpha}\left(  x,y\right)  -K^{\alpha}\left(
z,y\right)  \right\vert \left\vert f\left(  y\right)  \right\vert
d\sigma\left(  y\right) \\
&  \leq\sum_{k=0}^{\infty}\int_{2^{k+1}\varepsilon\geq\left\vert
y-z\right\vert \geq2^{k}\varepsilon}\left\vert K^{\alpha}\left(  x,y\right)
-K^{\alpha}\left(  z,y\right)  \right\vert \left\vert f\left(  y\right)
\right\vert d\sigma\left(  y\right) \\
&  \lesssim\sum_{k=0}^{\infty}\eta\left(  \frac{1}{2^{k}c}\right)  \frac
{1}{\left\vert B\left(  z,\varepsilon2^{k}\right)  \right\vert ^{1-\frac
{\alpha}{n}}}\int_{B\left(  z,\varepsilon2^{k+1}\right)  }\left\vert f\left(
y\right)  \right\vert d\sigma\left(  y\right) \\
&  \leq c^{\prime}\sum_{k=0}^{\infty}\eta\left(  \frac{1}{2^{k}c}\right)
\mathcal{M}^{\alpha}\left(  \left\vert f\right\vert \sigma\right)  \left(
z\right)  \leq A^{\prime}\mathcal{M}^{\alpha}\left(  \left\vert f\right\vert
\sigma\right)  \left(  z\right)  .
\end{align*}
Thus we conclude that%
\[
\left\vert T_{\varepsilon,\sigma}^{\alpha}f\left(  z\right)  \right\vert
\leq\left\vert T_{\sigma}^{\alpha}f\left(  x\right)  \right\vert +\left\vert
T_{\sigma}^{\alpha}f_{1}\left(  x\right)  \right\vert +A^{\prime}%
\mathcal{M}^{\alpha}\left(  \left\vert f\right\vert \sigma\right)  \left(
z\right)  ,\ \ \ \ \ \text{for }x\in B\left(  z,\frac{\varepsilon}{c}\right)
.
\]

Now we note that%
\begin{align*}
&  \left\vert \left\{  x\in B\left(  z,\frac{\varepsilon}{c}\right)
:\left\vert T_{\sigma}^{\alpha}f\left(  x\right)  \right\vert >\lambda
\right\}  \right\vert _{\omega}\leq\lambda^{-r}\int_{B\left(  z,\frac
{\varepsilon}{c}\right)  }\left\vert T_{\sigma}^{\alpha}f\left(  x\right)
\right\vert ^{r}d\omega\left(  x\right) \\
&  \leq\lambda^{-r}\left\vert B\left(  z,\frac{\varepsilon}{c}\right)
\right\vert _{\omega}M_{\omega}\left\vert T_{\sigma}^{\alpha}f\right\vert
^{r}\left(  z\right)  \leq\frac{1}{4}\left\vert B\left(  z,\frac{\varepsilon
}{c}\right)  \right\vert _{\omega}\ ,
\end{align*}
provided%
\[
\lambda\geq4^{\frac{1}{r}}\left(  M_{\omega}\left\vert T_{\sigma}^{\alpha
}f\right\vert ^{r}\right)  \left(  z\right)  ^{\frac{1}{r}}.
\]
Moreover we have%
\begin{align*}
&  \left\vert \left\{  x\in B\left(  z,\frac{\varepsilon}{c}\right)
:\left\vert T_{\sigma}^{\alpha}f_{1}\left(  x\right)  \right\vert
>\lambda\right\}  \right\vert _{\omega}\leq\mathfrak{N}_{T^{\alpha}%
}^{\operatorname*{weak}}\left(  \sigma,\omega\right)  ^{2}\frac{1}{\lambda
^{2}}\int\left\vert f_{1}\right\vert ^{2}d\sigma\\
&  =\mathfrak{N}_{T^{\alpha}}^{\operatorname*{weak}}\left(  \sigma
,\omega\right)  ^{2}\frac{1}{\lambda^{2}}\int_{B\left(  z,\frac{\varepsilon
}{c}\right)  }\left\vert f\right\vert ^{2}d\sigma\leq\mathfrak{N}_{T^{\alpha}%
}^{\operatorname*{weak}}\left(  \sigma,\omega\right)  ^{2}\frac{1}{\lambda
^{2}}\left\vert B\left(  z,\frac{\varepsilon}{c}\right)  \right\vert _{\omega
}\mathcal{M}_{\omega}\left(  \left\vert f\right\vert ^{2}\sigma\right)
\left(  z\right)  \leq\frac{1}{4}\left\vert B\left(  z,\frac{\varepsilon}%
{c}\right)  \right\vert _{\omega}\ ,
\end{align*}
provided%
\[
\lambda\geq2\mathfrak{N}_{T^{\alpha}}^{\operatorname*{weak}}\left(
\sigma,\omega\right)  \sqrt{\mathcal{M}_{\omega}\left(  \left\vert
f\right\vert ^{2}\sigma\right)  \left(  z\right)  }.
\]

Altogether, if
\[
\lambda=\max\left\{  4^{\frac{1}{r}}\left(  M_{\omega}\left\vert T_{\sigma
}^{\alpha}f\right\vert ^{r}\right)  \left(  z\right)  ^{\frac{1}{r}%
},2\mathfrak{N}_{T^{\alpha}}^{\operatorname*{weak}}\left(  \sigma
,\omega\right)  \sqrt{\mathcal{M}_{\omega}\left(  \left\vert f\right\vert
^{2}\sigma\right)  \left(  z\right)  }\right\}  ,
\]
then there exists $x\in B\left(  z,\frac{\varepsilon}{c}\right)  $ such that
both $\left\vert T_{\sigma}^{\alpha}f\left(  x\right)  \right\vert $ and
$\left\vert T_{\sigma}^{\alpha}f_{1}\left(  x\right)  \right\vert $ are at
most $\lambda$, and it follows that%
\begin{align*}
\left\vert T_{\varepsilon,\sigma}^{\alpha}f\left(  z\right)  \right\vert  &
\leq\left\vert T_{\sigma}^{\alpha}f\left(  x\right)  \right\vert +\left\vert
T_{\sigma}^{\alpha}f_{1}\left(  x\right)  \right\vert +A^{\prime}%
\mathcal{M}^{\alpha}\left(  f\sigma\right)  \left(  z\right)  \leq
2\lambda+A^{\prime}\mathcal{M}^{\alpha}\left(  \left\vert f\right\vert
\sigma\right)  \left(  z\right) \\
&  \leq2\cdot4^{\frac{1}{r}}\left(  M_{\omega}\left\vert T_{\sigma}^{\alpha
}f\right\vert ^{r}\right)  ^{\frac{1}{r}}\left(  z\right)  +2\mathfrak{N}%
_{T^{\alpha}}^{\operatorname*{weak}}\left(  \sigma,\omega\right)
A\sqrt{\mathcal{M}_{\omega}\left(  \left\vert f\right\vert ^{2}\sigma\right)
\left(  z\right)  }+A^{\prime}\mathcal{M}^{\alpha}\left(  f\sigma\right)
\left(  z\right)  .
\end{align*}
Taking the supremum in $\varepsilon>0$ now completes the proof of Lemma
\ref{flat}.
\end{proof}

Now we can quickly prove Theorem \ref{weak type}\ using the well known
properties that $M_{\mu,r}$ is bounded on both $L^{p}\left(  \mu\right)  $ and
$L^{p,\infty}\left(  \mu\right)  $ for $1\leq r<p\leq\infty$ (use the
Besicovitch covering lemma for strong type, and then interpolation for weak
type), and in addition, that $\mathcal{M}_{\mu}$ is `weak type on measures
$\nu$', i.e.%
\[
\left\vert \left\{  x\in\mathbb{R}^{n}:\mathcal{M}_{\mu}\nu\left(  x\right)
>\lambda\right\}  \right\vert _{\mu}\leq\frac{1}{\lambda}\int_{\mathbb{R}^{n}%
}d\nu.
\]
See e.g. \cite[8.17 on page 44]{Ste2}.

\begin{proof}
[Proof of Theorem \ref{weak type}]Recall that we are assuming only the
conditions in (\ref{basic}) on the kernel of $T$. The first inequality in
(\ref{weak flat}) follows from \cite[Theorem 1.8 (3)]{LaSaUr1}. To prove the
second inequality in (\ref{weak flat}), we use Lemma \ref{flat} to write,%
\begin{align*}
\left\vert \left\{  T_{\flat,\sigma}^{\alpha}f>\lambda\right\}  \right\vert
_{\omega}  &  \lesssim\left\vert \left\{  \left(  M_{\omega}\left\vert
T_{\sigma}^{\alpha}f\right\vert \left(  x\right)  ^{r}\right)  ^{\frac{1}{r}%
}>\frac{1}{3A_{\alpha,n,r}}\lambda\right\}  \right\vert _{\omega}\\
&  +\left\vert \left\{  \sqrt{\mathcal{M}_{\omega}\left(  \left\vert
f\right\vert ^{2}\sigma\right)  \left(  z\right)  }>\frac{1}{3\mathfrak{N}%
_{T^{\alpha}}^{\operatorname*{weak}}\left(  \sigma,\omega\right)
A_{\alpha,n,r}}\lambda\right\}  \right\vert _{\omega}\\
&  +\left\vert \left\{  \mathcal{M}^{\alpha}\left(  f\sigma\right)  \left(
x\right)  >\frac{1}{3A_{\alpha,n,r}}\lambda\right\}  \right\vert _{\omega}\\
&  \equiv I\left(  \lambda\right)  +II\left(  \lambda\right)  +III\left(
\lambda\right)  ,
\end{align*}
where
\begin{align*}
&  \lambda\sqrt{I\left(  \lambda\right)  }=\lambda\sqrt{\left\vert \left\{
M_{\omega,r}\left(  T_{\sigma}^{\alpha}f\right)  \left(  x\right)  >\frac
{1}{3A_{\alpha,n,r}}\lambda\right\}  \right\vert _{\omega}}\leq\left\Vert
M_{\omega,r}\left(  T_{\sigma}^{\alpha}f\right)  \right\Vert _{L^{2,\infty
}\left(  \omega\right)  }\\
&  \lesssim\left\Vert T_{\sigma}^{\alpha}f\right\Vert _{L^{2,\infty}\left(
\omega\right)  }\leq\mathfrak{N}_{T^{\alpha}}^{\operatorname*{weak}}\left(
\sigma,\omega\right)  \left\Vert f\right\Vert _{L^{2}\left(  \sigma\right)  }.
\end{align*}
Using the Besicovitch covering lemma, we can write,
\[
\left\{  z:\sqrt{\mathcal{M}_{\omega}\left(  \left\vert f\right\vert
^{2}\sigma\right)  \left(  z\right)  }>\frac{1}{3\mathfrak{N}_{T^{\alpha}%
}^{\operatorname*{weak}}\left(  \sigma,\omega\right)  A_{\alpha,n,r}}%
\lambda\right\}  =\bigcup_{i=1}^{\infty}B_{i}%
\]
where the collection of balls $\left\{  B_{i}\right\}  _{i=1}^{\infty}$ has
bounded overlap $\beta$, and each $B_{i}$ satisfies%
\[
\sqrt{\frac{1}{\left\vert B_{i}\right\vert _{\omega}}\int_{B_{i}}\left\vert
f\right\vert ^{2}d\sigma}>\frac{1}{3\mathfrak{N}_{T^{\alpha}}%
^{\operatorname*{weak}}\left(  \sigma,\omega\right)  A_{\alpha,n,r}}\lambda
\]
Then
\begin{align*}
&  \lambda\sqrt{II\left(  \lambda\right)  }=\lambda\sqrt{\left\vert \left\{
z:\mathcal{M}_{\omega}\left(  \left\vert f\right\vert ^{2}\sigma\right)
\left(  z\right)  >\frac{1}{9\mathfrak{N}_{T^{\alpha}}^{\operatorname*{weak}%
}\left(  \sigma,\omega\right)  ^{2}A_{\alpha,n,r}^{2}}\lambda^{2}\right\}
\right\vert _{\omega}}\lesssim\sqrt{\sum_{i\in\mathbb{N}}\lambda^{2}\left\vert
B_{i}\right\vert _{\omega}}\\
&  \leq\sqrt{\sum_{i\in\mathbb{N}}9A_{\alpha,n,r}^{2}\mathfrak{N}_{T^{\alpha}%
}^{\operatorname*{weak}}\left(  \sigma,\omega\right)  ^{2}\left(  \frac
{1}{\left\vert B_{i}\right\vert _{\omega}}\int_{B_{i}}\left\vert f\right\vert
^{2}d\sigma\right)  \left\vert B_{i}\right\vert _{\omega}}\\
&  \leq3A_{\alpha,n,r}\mathfrak{N}_{T^{\alpha}}^{\operatorname*{weak}}\left(
\sigma,\omega\right)  \sqrt{\beta\int_{\mathbb{R}^{n}}\left\vert f\right\vert
^{2}d\sigma}=3A_{\alpha,n,r}\sqrt{\beta}\mathfrak{N}_{T^{\alpha}%
}^{\operatorname*{weak}}\left(  \sigma,\omega\right)  \left\Vert f\right\Vert
_{L^{2}\left(  \sigma\right)  }.
\end{align*}
Finally we use the Besicovitch covering lemma once more to write,
\[
\left\{  \mathcal{M}^{\alpha}\left(  \left\vert f\right\vert \sigma\right)
\left(  x\right)  >\frac{1}{3}\lambda\right\}  =\bigcup_{i=1}^{\infty}B_{i}%
\]
where the collection of balls $\left\{  B_{i}\right\}  _{i=1}^{\infty}$ has
bounded overlap $\beta$ and each $B_{i}$ satisfies%
\[
\frac{1}{\left\vert B_{i}\right\vert ^{1-\frac{\alpha}{n}}}\int_{B_{i}%
}\left\vert f\right\vert d\sigma>\frac{1}{3}\lambda.
\]
Then
\begin{align*}
&  \lambda\sqrt{III\left(  \lambda\right)  }=\lambda\sqrt{\left\vert \left\{
\mathcal{M}^{\alpha}\left(  \left\vert f\right\vert \sigma\right)  \left(
x\right)  >\frac{1}{3}\lambda\right\}  \right\vert _{\omega}}=\sqrt{\sum
_{i\in\mathbb{N}}\lambda^{2}\left\vert B_{i}\right\vert _{\omega}}\\
&  \leq\sqrt{\sum_{i\in\mathbb{N}}9\left(  \frac{1}{\left\vert B_{i}%
\right\vert ^{1-\frac{\alpha}{n}}}\int_{B_{i}}\left\vert f\right\vert
d\sigma\right)  ^{2}\left\vert B_{i}\right\vert _{\omega}}\leq3\sqrt
{\sum_{i\in\mathbb{N}}\frac{\left\vert B_{i}\right\vert _{\omega}\left\vert
B_{i}\right\vert _{\sigma}}{\left\vert B_{i}\right\vert ^{2\left(
1-\frac{\alpha}{n}\right)  }}\int_{B_{i}}\left\vert f\right\vert ^{2}d\sigma
}\leq3\sqrt{A_{2}^{\alpha}\left(  \sigma,\omega\right)  }\left\Vert
f\right\Vert _{L^{2}\left(  \sigma\right)  }.
\end{align*}

Altogether we have,%
\begin{align*}
&  \mathfrak{N}_{T_{\flat}^{\alpha}}^{\operatorname*{weak}}\left(
\sigma,\omega\right)  =\sup_{f\in L^{2}\left(  \sigma\right)  }\frac
{\left\Vert T_{\flat,\sigma}^{\alpha}f\right\Vert _{L^{2,\infty}\left(
\omega\right)  }}{\left\Vert f\right\Vert _{L^{2}\left(  \sigma\right)  }%
}=\sup_{f\in L^{2}\left(  \sigma\right)  }\frac{\sup_{\lambda>0}\lambda
\sqrt{\left\vert \left\{  T_{\flat,\sigma}^{\alpha}f>\lambda\right\}
\right\vert _{\omega}}}{\left\Vert f\right\Vert _{L^{2}\left(  \sigma\right)
}}\\
&  \lesssim\sup_{f\in L^{2}\left(  \sigma\right)  }\frac{\sup_{\lambda
>0}\lambda\sqrt{I\left(  \lambda\right)  }}{\left\Vert f\right\Vert
_{L^{2}\left(  \sigma\right)  }}+\sup_{f\in L^{2}\left(  \sigma\right)  }%
\frac{\sup_{\lambda>0}\lambda\sqrt{II\left(  \lambda\right)  }}{\left\Vert
f\right\Vert _{L^{2}\left(  \sigma\right)  }}+\sup_{f\in L^{2}\left(
\sigma\right)  }\frac{\sup_{\lambda>0}\lambda\sqrt{III\left(  \lambda\right)
}}{\left\Vert f\right\Vert _{L^{2}\left(  \sigma\right)  }}\\
&  \lesssim\mathfrak{N}_{T^{\alpha}}^{\operatorname*{weak}}\left(
\sigma,\omega\right)  +\sqrt{A_{2}^{\alpha}\left(  \sigma,\omega\right)  },
\end{align*}
which completes the proof of the second inequality in (\ref{weak flat}).
\end{proof}

\end{document}